\documentclass[11pt]{article}

\usepackage{amsmath,amsfonts,amsthm,amscd,amssymb,graphicx}

\numberwithin{equation}{section}
%\numberwithin{equation}{part}
%\numberwithin{subsection}{part}

\usepackage{hyperref}

\usepackage{color}

\setlength{\evensidemargin}{0in} \setlength{\oddsidemargin}{0in}
\setlength{\textwidth}{6in} \setlength{\topmargin}{0in}
\setlength{\textheight}{8in}

%TODO: comment out for final printing... useful for proofreading
%while writing...
%\usepackage{showkeys}

\newtheorem{theorem}{Theorem}[section]
\newtheorem{theo}{Theorem}[section]
\newtheorem{lemma}[theorem]{Lemma}

\newtheorem{definition}[theorem]{Definition}

\newtheorem{defi}[theorem]{Definition}

\def\eps{\varepsilon }

%\renewcommand{\ker}[1]{\mathcal{N}(#1)}

%\newcommand{\RR}{{\mathbb R}}

%\newcommand{\CC}{{\mathbb C}}

%%%%%%%%%%%%%%%%%%%%%%%%%%%%%%%%%%%%%%%%%%%%%%%%%%

\def\beq{\begin{equation}}
\def\eeq{\end{equation}}
\def\bb1{{1\!\!1}}
%

%

%\\def\cU{{\check{U}}}

%CHANGED:

%\def\bU{{\hat {U}}}
%\def\bW{{\hat{W}}}
%ENDCHANGED
%\def\cW{{\hat{W}}}

%

%

%

%\def\I{\Im m}

%\newcommand{\tx}{\widetilde x}

%CHANGED:

\def\rit{{\Bbb R}}

\def\nit{{\Bbb N}}

\def\eps{\varepsilon}

\begin{document}

\centerline{\Large \bf Instability of shear layers and Prandtl's boundary layers} 

\bigskip

\centerline{D. Bian\footnote{School of Mathematics and Statistics, Beijing Institute of Technology,  Beijing, China}, 
E. Grenier\footnote{UMPA, CNRS  UMR $5669$, Ecole Normale Sup\'erieure de Lyon, Lyon, France}}

% \tableofcontents

%%%%%%%%%%%%%

\subsection*{Abstract}

%%%%%%%%%%%%%

This paper is devoted to the study of the nonlinear instability of shear layers and of Prandtl's boundary layers, for the
incompressible Navier Stokes equations. We prove that generic shear layers are nonlinearly unstable
provided the Reynolds number is large enough, or equivalently provided the viscosity is small enough.
We also prove that, generically,
Prandtl's boundary layer analysis fails for initial data with Sobolev regularity.
In both cases we give an accurate description of the first instability which arises. In some cases
a secondary instability appears, leading to several sublayers and to an unexpected complexity of the flow.

%%%%%%%%%%%%%%%%%%%%%%%%%%%%%%%%%%%%%%%%%%%%%%%%%%%%

\section{Introduction}

%%%%%%%%%%%%%%%%%%%%%%%%%%%%%%%%%%%%%%%%%%%%%%%%%%%%

In this paper, we are interested in the inviscid limit of Navier Stokes equations with Dirichlet boundary conditions in the half
plane, and study both the nonlinear stability of shear layers as the viscosity is small, 
and the nonlinear stability of Prandtl's boundary layers. 

\medskip

The classical Navier Stokes equations in the half space $\Omega = \{ y > 0 \}$ read
\beq \label{NS1}
\partial_t u^\nu + (u^\nu \cdot \nabla) u^\nu + \nabla p^\nu - \nu \Delta u^\nu = 0,
\eeq
\beq \label{NS2}
\nabla \cdot u^\nu = 0,
\eeq
\beq \label{NS3}
u^\nu = 0 \quad \hbox{when} \quad y = 0.
\eeq
As $\nu$ goes to $0$, formally, Navier Stokes equations degenerate into the incompressible Euler equations
\beq \label{Eu1}
\partial_t u + (u \cdot \nabla) u + \nabla p  = 0,
\eeq
\beq \label{Eu2}
\nabla \cdot u = 0,
\eeq
\beq \label{Eu3}
u \cdot n = 0 \quad \hbox{when} \quad y = 0,
\eeq
where $n$ is a vector normal to the boundary.

\medskip

Let us first describe the results on shear layers.  
We will consider shear profiles $U^\nu(t,y)$ of the form
\beq \label{heat0}
U^\nu(t,y) = \left(
\begin{array}{cc}
U_s(t,y) \cr
0 
\end{array} \right),
\eeq
where $U_s$ is a given smooth function.
For such flows, Navier Stokes equations reduce to the heat equation
\beq \label{heat1}
\partial_t U_s - \nu \partial_{yy} U_s = 0,
\eeq
together with the boundary condition 
\beq \label{heat2}
U_s(t,0) = 0 .
\eeq
We define
$$
U_+ = \lim_{y \to \infty} U_s(0,y),
$$
and we will assume that $U_+ \ne 0$.

\medskip

As the viscosity goes to $0$, two kinds of instabilities appear, depending on whether the shear flow $U^\nu$ is linearly stable when $\nu = 0$ or not:

\begin{itemize}

\item Some flows are spectrally unstable for linearized Euler equations (namely for $\nu = 0$). According to Rayleigh's criterium, this implies that
they have an inflection point. In this case they remain linearly unstable provided the viscosity is small enough. The speed of growth of
the corresponding linear instabilities is of order $O(1)$, as well as their sizes, in both $x$ and $y$ variables.

 \item Other flows are spectrally stable for linearized Euler equations, for instance concave flows, which do not have inflection points.
 However, even for these flows, provided $\nu$ is small enough, there exists unstable modes for the linearized Navier Stokes equations.
 That is, adding viscosity {\it destabilizes} the flow. These unstable modes, also called Tollmien Schlichting waves (TS waves)
  have been investigated in details in \cite{GGN} and \cite{Bian4}.
 They are characterized by a very slow growth rate, of order $O(\nu^{1/2})$, much slower than the growth rate of the first kind of instabilities.
 Their typical size in $x$ is also very different. It is very large, of order $O(\nu^{-1/4})$ to $O(\nu^{-1/6})$. We thus face long wave instabilities,
 growing very slowly in time.

\end{itemize}

We refer to the Figure $8.8$ of \cite{Drazin} or to the Figure $29$, page $170$, of \cite{Landau}.
Note that for the second kind of flows,  all the physics of the stability  lies in the area $| \alpha | \ll 1$.

\medskip

We already know that, for the first kind of flows, linear instability implies nonlinear instability \cite{GN2}: 
unstable modes grow with a speed $O(1)$, leading to nonlinear perturbations which reach a size $O(1)$ in $L^\infty$. Their typical size, 
both in $x$ and in $y$, is also $O(1)$.

The situation is more complex for the second kind of flows,
 since the corresponding unstable modes grow very slowly, like $\exp( C_0 \nu^{1/2} t)$
only. Let $\phi(t)$ be the size of the nonlinear instability. Then we expect $\phi$ to satisfy a Landau equation of the form
\beq \label{landau}
\dot \phi = \lambda \phi + A | \phi |^2 \phi + O(\phi^5)
\eeq
where $\Re \lambda \approx C_0 \nu^{1/2}$. Note that there is no terms in $\phi^2$ or $\phi^4$ because of symmetries.
The long time evolution of $\phi(t)$ depends on the sign of $A$. By multiplying (\ref{landau}) by $\bar \phi$, we see that if
$\Re A > 0$ then the solution $\phi$, starting with a very small initial data $\phi(0)$, may reach $O(1)$ within times of order
$\nu^{-1/2}$. However if $\Re A < 0$, $\phi(t)$ saturates at $O(\nu^{1/4})$. Preliminary numerical computations, detailed in \cite{Bian2},
show that $\Re A < 0$. We thus expect the nonlinear instability to only reach $O(\nu^{1/4})$, where other phenomena may occur.
In this paper we prove the following result.

\begin{theorem} \label{theofirst2}
Let $U_s(0,y)$ be a smooth   profile such that $U_s(0,0) = 0$, $\partial_y U_s(0,0) \ne 0$, such that $U_s(0,y)$ 
converges exponentially fast to some positive constant $U_+$ and such that $\partial_y^2 U_s(0,y)$ converges exponentially fast to $0$
as $y$ goes to $+ \infty$. Let $U^\nu$ is defined by (\ref{heat0}). 
Let $\theta > 0$ be arbitrarily small.
{Then for any arbitrarily large $N$ and arbitrarily large $s$,
 and for $\nu$ small enough, there exists a vector field $V^\nu$ and a force $F^\nu$,
solutions of Navier Stokes equations, namely
$$
\partial_t V^\nu + (V^\nu \cdot \nabla )  V^\nu + \nabla q^\nu - \nu \Delta V^\nu = F^\nu,
$$
$$
\nabla \cdot V^\nu = 0
$$
with $V^\nu = 0$ at $y = 0$, 
}  a sequence of times $T^\nu$, and a constant $\sigma > 0$ such that
\beq 
\| V^\nu(0,\cdot,\cdot) - U^\nu(0,\cdot) \|_{H^s} \le \nu^N,
\eeq
\beq 
\| F^\nu \|_{L^\infty([0, T^\nu], H^s)} \le \nu^N,
\eeq
\beq \label{theosplit}
\| V^\nu(T^\nu,\cdot,\cdot) - U^\nu(T^\nu,\cdot) \|_{L^\infty} \ge \sigma \nu^{1/4 + \theta} ,
\eeq
\beq 
T^\nu \sim C_0 \nu^{-1/2} \log \nu^{-1} ,
\eeq
and such that $V^\nu(T^\nu,\cdot,\cdot)$ exhibits the scales $\nu^{-1/4}$, $1$ and $\nu^{1/4}$ in the $y$ variable. 
\end{theorem}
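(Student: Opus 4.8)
The plan is to build the unstable solution $V^\nu$ as a controlled perturbation of the background shear $U^\nu$, using the slowly growing Tollmien--Schlichting (TS) mode as the seed and propagating it through the weakly nonlinear regime until it saturates at the predicted $O(\nu^{1/4})$ amplitude. First I would freeze the time dependence of $U_s$: since $U_s$ solves the heat equation \eqref{heat1}, it evolves only on the slow timescale $\nu^{-1} $, whereas the instability acts on timescale $\nu^{-1/2}\log\nu^{-1}$, so up to errors absorbable in the force $F^\nu$ one may treat $U_s(t,y)\approx U_s(0,y)$ throughout $[0,T^\nu]$. Next I would invoke the linear analysis of \cite{GGN,Bian4}: for $\nu$ small there is an unstable eigenvalue $\lambda(\nu)$ of the Orr--Sommerfeld operator with $\Re\lambda(\nu)\sim C_0\nu^{1/2}$, at a long wavenumber $\alpha(\nu)$ of size between $\nu^{1/4}$ and $\nu^{1/6}$, whose eigenfunction $\psi^\nu(y)$ carries precisely the three scales: an inviscid bulk part on scale $O(1)$, a sublayer of width $O(\nu^{1/4})$ (the critical/Tollmien layer near $U_s=c$), and an outer Prandtl-type adjustment layer of width $O(\nu^{-1/4})$ where the long wave $e^{i\alpha x}$ sees the boundary. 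I would set the initial datum $V^\nu(0)=U^\nu(0)+\delta^\nu\,\Re\big(e^{i\alpha(\nu) x}\psi^\nu(y)\big)$ with $\delta^\nu$ chosen polynomially small in $\nu$ (much smaller than $\nu^N$ would allow, say $\delta^\nu=\nu^{N+10}$), so the $H^s$ bound on the initial perturbation is immediate once $\psi^\nu$ is normalized in $H^s$; the $\alpha$-dependence only helps since each $x$-derivative brings a factor $\alpha\ll1$.

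The core is the nonlinear propagation. I would write $V^\nu=U^\nu+v$ and derive the perturbation equation $\partial_t v = \mathcal{L}^\nu v + Q(v,v)$, where $\mathcal{L}^\nu$ is the linearized Navier--Stokes operator about $U^\nu$ and $Q$ is the bilinear term. Using the spectral projection onto the unstable mode together with semigroup bounds on $\mathcal{L}^\nu$ — here I would need resolvent estimates of the type established in \cite{GGN,Bian4}, giving $\|e^{t\mathcal{L}^\nu}\|\lesssim \nu^{-\beta}e^{\Re\lambda(\nu)t}$ on the relevant function space, with the stable part decaying or at worst growing strictly slower — I would run a bootstrap/fixed-point argument on $[0,T^\nu]$. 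The amplitude of the projection onto $\psi^\nu$ obeys, to leading order, the Landau equation \eqref{landau} with $\Re\lambda\approx C_0\nu^{1/2}$ and $\Re A<0$ (the sign being taken from the preliminary computations of \cite{Bian2}, or treated as a genericity hypothesis); integrating it from $\delta^\nu$ shows $|\phi(t)|$ grows like $\delta^\nu e^{\Re\lambda t}$ until, at $t=T^\nu\sim C_0\nu^{-1/2}\log\nu^{-1}$ with the log calibrated so that $\delta^\nu e^{\Re\lambda T^\nu}\sim\nu^{1/4}$, it enters the saturation plateau at height $\sim\nu^{1/4}$. Choosing to stop slightly before full saturation — at the moment the amplitude first reaches $\sigma\nu^{1/4+\theta}$, which happens at a time still of the stated form since $\theta$ only shifts the log by a bounded multiple — gives \eqref{theosplit} while keeping the nonlinear error terms ($O(\phi^3)$ and the Duhamel remainder) genuinely subdominant, so the bootstrap closes. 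The force $F^\nu$ collects exactly the discarded terms: the time-variation of $U_s$, the nonexact-solution defect from truncating the Landau normal form, and any regularization needed to make $V^\nu$ a bona fide smooth divergence-free field vanishing at $y=0$ — each of these can be made $O(\nu^N)$ in $L^\infty_t H^s$ by taking $\delta^\nu$ small enough and, if necessary, adding a high-order corrector to the ansatz. The three-scale structure of $V^\nu(T^\nu)$ is then inherited from that of $\psi^\nu$, since at time $T^\nu$ the solution is still dominated by its projection onto the unstable eigenfunction.

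The main obstacle I anticipate is the semigroup/resolvent estimate for $\mathcal{L}^\nu$ uniformly on the timescale $\nu^{-1/2}\log\nu^{-1}$: the linearized operator is non-self-adjoint, has a near-degenerate spectrum crowded near the imaginary axis (many modes with growth rates $o(\nu^{1/2})$ sitting just below the unstable one), and one must control the associated spectral projection's norm, which typically blows up polynomially in $\nu^{-1}$ as the unstable eigenvalue approaches the rest of the spectrum; this is precisely why $\delta^\nu$ is taken a large power of $\nu$ and why the argument is delicate. A secondary difficulty is the validity of the Landau reduction itself — justifying that the cubic coefficient $A$ computed formally (and numerically in \cite{Bian2}) governs the true dynamics requires a center-manifold-type reduction adapted to the $\nu$-dependent spectral gap, and keeping track of how $A$ scales in $\nu$ (it will itself carry negative powers of $\nu$ coming from the $x$-scale $\alpha^{-1}$ and the critical-layer singularity), which directly affects the saturation amplitude and hence the precise exponent in \eqref{theosplit}. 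Everything else — the $H^s$ bounds on the ansatz, the construction of $F^\nu$, the divergence-free/boundary corrections — is routine once these two estimates are in hand.
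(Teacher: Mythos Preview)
Your high-level strategy --- perturb $U^\nu$ by a small multiple of the TS eigenmode, let it grow, and dump the residual into $F^\nu$ --- matches the paper's. But the execution is quite different, and the differences matter.

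The paper reduces Theorem~\ref{theofirst2} to the general ``slow instability'' Theorem~\ref{theogeneral} (the shear layer is a trivial instance of the form (\ref{ueps}) with $\eps=0$). The proof of Theorem~\ref{theogeneral} does \emph{not} run a bootstrap on the true dynamics, nor does it use semigroup bounds on $e^{t\mathcal L^\nu}$, nor a Landau/center-manifold reduction. Instead it constructs $V^\nu$ explicitly as a truncated formal series: $V^\nu = U^\nu + \sum_{n\ge 1} v_n$ where $v_1 = \nu^N \Psi_{lin}$ (a wave packet built from modes near $\alpha_0\nu^{1/4}$; for $x$-independent $U^\nu$ a single plane wave would do, as remarked after Theorem~\ref{theogeneral}) and each $v_n$ is obtained by inverting the Orr--Sommerfeld operator on the source produced by quadratic interactions of the earlier $v_k$'s and the slow drift of the background. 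The key technical input is not a semigroup estimate but pointwise bounds on the Orr--Sommerfeld \emph{Green function} (Section~\ref{defiGreen}), giving $\|Orr^{-1}_{\alpha,c,\nu} f\|_{Y^n} \lesssim \nu^{-1/4}\|f\|_{X^n_\omega}$ away from the eigenvalue and analogous bounds at and near the eigenvalue at the cost of polynomial-in-$\sqrt{\nu}\,t$ prefactors (Lemmas~\ref{away}--\ref{propdecomposition2-2}). Combined with the $\nu^{1/4}$ gain in the bilinear estimate (\ref{u5}), each order of the iteration loses only a factor $\nu^{-1/4}$ relative to the previous one; the $P$-th order term grows like $\nu^{PN} e^{P\nu^{1/2}\Re\lambda t}/\nu^{(P-1)/4}$, so choosing $T^\nu$ with $\nu^N e^{\nu^{1/2}\Re\lambda T^\nu}\sim \nu^{1/4+\theta}$ makes each successive term a factor $\nu^\theta$ smaller than its predecessor, and the truncation error is the force $F^\nu$.

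This sidesteps both obstacles you flag. You never need a uniform bound on $e^{t\mathcal L^\nu}$ because you never solve $\partial_t - \mathcal L^\nu$ as an evolution: you invert the resolvent mode by mode, and since $V^\nu$ is only an \emph{approximate} solution (the theorem allows a force), no fixed-point argument is needed to close. And the Landau coefficient $A$ plays no role whatsoever: the construction stops \emph{before} saturation, when the series is still dominated by its linear term, so the sign of $\Re A$ --- which in the paper is purely motivational, cited from the numerics of \cite{Bian2} --- is irrelevant to the lower bound $\nu^{1/4+\theta}$. Your center-manifold reduction would only be needed to describe what happens \emph{at} saturation, which the theorem does not claim to do.
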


These three scales are reminiscent of the "triple deck" which appears in the study of boundary layer separation.
In this paper we construct a nonlinear instability using horizontal waves numbers $\alpha$ of order $\nu^{1/4}$, namely near the
lower marginal stability curve. It is in fact possible to construct instabilities for $\alpha$ of order $\nu^{1/4}$ to $\nu^{1/6}$,
which would lead to different vertical scales.

This theorem describes the first nonlinear step in the transition from a laminar shear layer to a turbulent flow: if the Reynolds
number is large enough, the shear layer is linearly unstable. This linear instability creates a nonlinear instability,
which reaches an amplitude of order at least $\nu^{1/4}$. 
One possibility is that  this two dimensional linear instabilities grow until cubic interactions saturate it, forming "roll like structures".
Note that this nonlinear instability may also later develop secondary instabilities, in particular three dimensional ones.

Let us go back to profiles of the first kind, as discussed previously, namely to profiles which are unstable with
respect to Euler equations.
For such profiles we will prove:

\begin{theorem} \label{theomain}
Let $U_s(0,y)$ be an holomorphic profile which is spectrally unstable for Euler equations. Then
for any arbitrarily large $N$ and $s$,  there exists a sequence of vector fields $V^\nu$ and a sequence of forces $F^\nu$,
solutions of Navier Stokes equations
with forcing term $F^\nu$,  a sequence of times $T^\nu$, and a constant $\sigma > 0$ such that
\beq  \label{d1}
\| V^\nu(0,\cdot,\cdot) - U^\nu(0,\cdot) \|_{H^s} \le \nu^N,
\eeq
\beq \label{d2}
\| F^\nu \|_{L^\infty([0, T^\nu], H^s)} \le \nu^N,
\eeq
\beq \label{d3}
\| V^\nu(T^\nu,\cdot,\cdot) - U^\nu(T^\nu,\cdot) \|_{L^\infty} \ge \sigma  > 0,
\eeq
\beq \label{d4}
T^\nu \sim C_0 \log \nu^{-1} ,
\eeq
and such that  $V^\nu(T^\nu,\cdot,\cdot)$ exhibits the scales $1$, $\nu^{1/2}$, $\nu^{3/8}$ and $\nu^{5/8}$ in the $y$ variable.

If $U_s(0,y)$ is $C^\infty$ but not analytic, the same result holds true, provided (\ref{d3}) is replaced by
\beq \label{d3bis}
\| V^\nu(T^\nu,\cdot,\cdot) - U^\nu(T^\nu,\cdot) \|_{L^\infty} \ge \nu^\theta  > 0,
\eeq
where $\theta > 0$ may be chosen arbitrarily small.

\end{theorem}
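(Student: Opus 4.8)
The plan is to run the standard ``nonlinear instability from spectral instability'' machinery: build a high-order approximate solution whose leading term is the unstable eigenmode of the linearized flow, and then control the remainder. Holomorphy of $U_s$ is what lets the whole expansion be summed, so that the instability can be driven all the way up to amplitude $O(1)$; for merely $C^\infty$ profiles one is forced to truncate, which is precisely why the amplitude is then only $\nu^\theta$. First I would produce the Navier--Stokes unstable mode. Spectral instability of $U_s(0,\cdot)$ for Euler gives a real wavenumber $\alpha$ and an eigenfunction $\phi_{Eu}$ of Rayleigh's equation with $\phi_{Eu}(0)=0$, $\phi_{Eu}\to0$ at $+\infty$, and eigenvalue $\lambda_{Eu}$ with $\Re\lambda_{Eu}>0$; since $U_s(0,\cdot)$ extends holomorphically to a horizontal strip and $\Re\lambda_{Eu}\neq0$ there is no critical-layer singularity, so $u_{Eu}:=e^{\lambda_{Eu}t}\nabla^\perp\big(\phi_{Eu}(y)e^{i\alpha x}\big)$ is holomorphic in $y$ in that strip and satisfies no-penetration but not no-slip at $y=0$. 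I then upgrade it to an exact eigenmode $u_1^\nu$ of the operator $L^\nu$ obtained by linearizing Navier--Stokes about $U^\nu$: add a boundary-layer corrector of thickness $\nu^{1/2}$, governed near $y=0$ by $\lambda u_{BL}-\nu\partial_{yy}u_{BL}\approx0$ with $u_{BL}$ cancelling the tangential trace of $u_{Eu}$, together with an interior corrector, and close by a resolvent/fixed-point argument of Orr--Sommerfeld type as in \cite{GGN,GN2,Bian4}; this yields $\lambda^\nu=\lambda_{Eu}+o(1)$, still with positive real part, and an eigenfunction already exhibiting the two $y$-scales $1$ (the Rayleigh part) and $\nu^{1/2}$ (the no-slip corrector). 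On the time interval of interest the heat-evolved profile is frozen up to a negligible $O(\nu\log\nu^{-1})$, so $U_s$ may be treated as $t$-independent.

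Next I look for the solution as $V^\nu=U^\nu+\sum_{j\ge1}\delta^j u_j$, with $\delta$ a small amplitude, $u_1=u_1^\nu$, and, for $j\ge2$, $u_j$ solving $\partial_t u_j=L^\nu u_j-\sum_{k+\ell=j}\mathbb{P}\big((u_k\cdot\nabla)u_\ell\big)$ (with $\mathbb{P}$ the Leray projection), completed by its own $\nu^{1/2}$-boundary-layer correctors so as to satisfy no-slip; each $u_j$ carries the frequencies $e^{ik\alpha x}$, $|k|\le j$, and grows like $e^{j\Re\lambda^\nu t}$. Working in an analytic norm adapted to the strip of holomorphy of $U_s$, the key input is the uniform semigroup bound $\|e^{tL^\nu}\|\lesssim e^{\Re\lambda^\nu t}$; Duhamel and induction then give $\|u_j(t)\|\lesssim C^j e^{j\Re\lambda^\nu t}$ for a fixed $C$, so the series converges as long as $\delta e^{\Re\lambda^\nu t}<\rho:=1/(2C)$, an $O(1)$ threshold.

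Then I fix the target amplitude $\mu$: any fixed value $<\rho$ in the analytic case, and $\mu=\nu^\theta$ in the $C^\infty$ case. I set $\delta=\nu^{N'}$ with $N'\ge N$ and define $T^\nu$ by $\delta e^{\Re\lambda^\nu T^\nu}=\mu$, so that $T^\nu\sim C_0\log\nu^{-1}$ for an appropriate constant $C_0$ (depending on $N$ and $\Re\lambda_{Eu}$). In the analytic case I take $V^\nu$ to be the exact sum on $[0,T^\nu]$, resolving the viscous boundary-layer mismatches to order $N$; then $V^\nu$ solves Navier--Stokes up to a forcing $F^\nu$ consisting only of the unresolved viscous tails, with $\|F^\nu\|_{L^\infty([0,T^\nu],H^s)}\lesssim\nu^N$ and $\|V^\nu(0)-U^\nu(0)\|_{H^s}\lesssim\delta\le\nu^N$. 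In the $C^\infty$ case I keep instead the truncation $U_{app}=U^\nu+\sum_{j=1}^M\delta^j u_j$ with $M$ so large that $\theta(M+1)>N$, and take $F^\nu$ to be the residual; since $\delta e^{\Re\lambda^\nu t}\le\nu^\theta$ for $t\le T^\nu$, the residual is $\lesssim(C\nu^\theta)^{M+1}+\nu^N\lesssim\nu^N$. In both cases $\|V^\nu(T^\nu)-U^\nu(T^\nu)\|_{L^\infty}\ge\delta\,|u_1(T^\nu)|_{L^\infty}-C'\mu^2\gtrsim\mu$, which is (\ref{d3}) with a fixed $\sigma>0$, respectively (\ref{d3bis}) after renaming $\theta$. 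Finally the claimed $y$-scales $1,\nu^{1/2},\nu^{3/8},\nu^{5/8}$ of $V^\nu(T^\nu)$ are read off the correctors: beyond $1$ and $\nu^{1/2}$, once the primary amplitude crosses the activation threshold of the secondary instability the emergent $O(1)$, $\nu^{1/2}$-thick near-wall layer itself behaves as a boundary layer of local Reynolds number $\nu^{-1/2}$, whose Tollmien--Schlichting / triple-deck response contributes two further sublayers, of thicknesses $\nu^{3/8}$ and $\nu^{5/8}$, present in the $u_j$ of large index and surviving to $t=T^\nu$.

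The main obstacle is the uniform semigroup bound $\|e^{tL^\nu}\|\lesssim e^{\Re\lambda^\nu t}$ off the finite-dimensional unstable eigenspace, and its use in closing the iteration over the long time $T^\nu\sim\log\nu^{-1}$, during which the solution is amplified by $\sim\nu^{-N'}$: $L^\nu$ is strongly non-self-adjoint and its resolvent is delicate precisely in the thin near-wall sublayers, so this estimate --- not the bookkeeping of the expansion --- is the real work, and it is here that holomorphy of $U_s$ enters essentially, both to run the argument in an analytic norm (avoiding the derivative loss caused by the $\nu^{1/2}$- and $\nu^{3/8}$-sublayers) and to sum the full series and thereby reach amplitude $O(1)$; when $U_s$ is only $C^\infty$ one can merely truncate, and the truncation error is what forces the weaker conclusion (\ref{d3bis}).
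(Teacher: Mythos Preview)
Your treatment of the \emph{first} instability is essentially the paper's: upgrade the Rayleigh eigenmode to an approximate Orr--Sommerfeld mode by adding a $\nu^{1/2}$ boundary-layer corrector (as in (\ref{exp1})--(\ref{hie3})), then run the nonlinear iteration $V^\nu=U^\nu+\sum_j\delta^j u_j$ to reach amplitude $O(1)$ (holomorphic case, via \cite{GN2}) or $O(\nu^\theta)$ ($C^\infty$ case, Theorem~\ref{theogeneralfast}). This correctly yields (\ref{d1})--(\ref{d4}), resp.\ (\ref{d3bis}), and the scales $1$ and $\nu^{1/2}$.

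The gap is in your derivation of the scales $\nu^{3/8}$ and $\nu^{5/8}$. Your physical picture --- the $\nu^{1/2}$ sublayer has local Reynolds number $\nu^{-1/2}$ and develops its own Tollmien--Schlichting instability --- is exactly right, but the claim that this is ``present in the $u_j$ of large index'' is not. Each $u_j$ is generated recursively from quadratic interactions of the previous $u_k$; it carries only the horizontal frequencies $k\alpha$ with $|k|\le j$, grows like $e^{j\Re\lambda^\nu t}$, and its vertical structure is built solely from the scales $1$ and $\nu^{1/2}$ already present in $u_1$. The TS mode of the sublayer lives at a \emph{different} horizontal wavenumber and a \emph{different}, much slower growth rate (of order $\nu^{1/4}$ in original time); nothing in the iteration seeds or amplifies it. It is not a harmonic of the primary mode and cannot appear as a byproduct of the expansion.

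The paper's proof is explicitly a \emph{two-stage} construction. After the first instability has run to some $T_1^\nu\sim\log\nu^{-1}$ and produced an $O(1)$ sublayer of width $\nu^{1/2}$ --- whose profile is, to leading order, $-V_1^{int,\nu}(0,0,0)(1-e^{-\mu y})$ --- one rescales $(t,x,y)\to\nu^{1/2}(t,x,y)$, so that the viscosity becomes $\nu_1=\sqrt{\nu}$ and the sublayer becomes an $O(1)$-width shear flow with nonzero wall shear. One then applies the ``slow instability'' result (Theorem~\ref{theogeneral}) to \emph{this} rescaled flow, with $\eps=\nu^{1/2}=\nu_1$: its TS eigenmode exhibits the three scales $\nu_1^{-1/4},1,\nu_1^{1/4}$ in the rescaled vertical variable, i.e.\ $\nu^{3/8},\nu^{1/2},\nu^{5/8}$ in the original $y$. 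The secondary perturbation is seeded afresh (through $F^\nu$) at $t=T_1^\nu$ and grown over an additional time of order $\nu^{1/4}\log\nu^{-1}$ in the original variable, which is negligible in (\ref{d4}). So the missing step is that the secondary instability must be seeded and run as a \emph{separate} construction after rescaling onto the sublayer; it is not contained in the primary expansion.
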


The existence of a sequence of solutions satisfying (\ref{d1})-(\ref{d4})
has already been obtained in \cite{GN2}. Theorem \ref{theomain} 
 improves this result by describing a secondary instability, which leads to two other sublayers, of
sizes $\nu^{3/8}$ and $\nu^{5/8}$.
The structure of the flow at $T^\nu$ is thus much more complex than expected, and, up to the best of our knowledge, not 
described in the classical physics literature \cite{Drazin,Landau}. 

This theorem describes the successive development of two instabilities.
First, the shear layer $U^\nu(0,\cdot)$ is linearly unstable, with an unstable mode $u_{lin}$ which displays two sizes, namely $1$ and $\nu^{1/2}$.
As the instability grows, its "sublayer", of size $\nu^{1/2}$, increases.
The Reynolds number of this sublayer, defined as the product of its typical size by its typical velocity divided by the viscosity,
is of magnitude of $\nu^{1/2} \times 1 / \nu = \nu^{-1/2}$, namely goes to infinity as $\nu \to 0$. As any boundary layer  becomes
unstable provided its Reynolds number is large enough, this sublayer itself becomes unstable.
Secondary instabilities appear in this sublayer, which themselves display  smaller scales, including
a "subsublayer" of order $\nu^{5/8}$, and a "recirculation layer" of order $\nu^{3/8}$.

\medskip

Let us now turn to Prandtl boundary layers.
Despite many efforts, the question of the convergence of solutions of Navier Stokes equations to solutions of Euler equations
is widely open. A classical approach is to introduce the so called Prandtl's boundary layer, of size $O(\sqrt{\nu})$. This leads
to look for $u^\nu$ under the form 
\beq \label{Prandtl}
u^\nu(t,x,y) = u^{Euler}(t,x,y) + u^{Prandtl}(t,x,\nu^{-1/2} y) + O(\sqrt{\nu})_{L^\infty},
\eeq
where $u^{Euler}$ solves Euler equations and $u^{Prandtl}$ is a corrector, located near the boundary, solution of so called
Prandtl's equations.

The expansion (\ref{Prandtl}) has been justified in small time for initial data with analytic regularity
 by R.E. Caflish and M. Samartino \cite{Caflish} and has later been extended to data with Gevrey regularity, or for initial data
with vorticity supported away from the boundary \cite{Maekawa}. These results show that the formal analysis of Prandtl is valid in particular cases.

In the analytic case for instance, for small $u^\nu$, there exists two sequences of analytic functions 
$u^{Euler,\nu}(t,x,y)$ and $u^{Prandtl,\nu}(t,x,y)$ such that
\beq \label{Prandtl2}
u^\nu(t,x,y) = u^{Euler,\nu}(t,x,y) + u^{Prandtl,\nu} \Bigl( t, x, {y \over \sqrt{\nu}} \Bigr) .
\eeq
Moreover, $u^{Euler,\nu}$ and $u^{Prandtl,\nu}$ may be expanded in series  in $\nu$, with in particular
\beq \label{Prandtl3}
u^{Euler,\nu}(t,x,y) = u^{Euler}(t,x,y) + O(\sqrt{\nu}),
\eeq
and similarly
\beq \label{Prandtl4}
u^{Prandtl,\nu}(t,x,Y) = u^{Prandtl}(t,x,Y) + O(\sqrt{\nu}),
\eeq
where $Y = y / \sqrt{\nu}$ is the rescaled vertical variable with respect to boundary layer's size.
In this article, we prove that Prandtl's Ansatz is {\it false} in the case of solutions with Sobolev regularity.
A first result in this direction has been done in \cite{GN2}, where it is proven that Prandtl's analysis fails 
for {\it some particular} initial data.
 Theorem \ref{theomain2} improves this result by describing secondary instabilities. 

\begin{theorem} \label{theomain2}
Let $u(t,x,y)$ be a smooth solution to Navier Stokes equations, which follows Prandtl's Ansatz, namely of the form (\ref{Prandtl2}),
(\ref{Prandtl3}) and (\ref{Prandtl4}).
Let us assume that there exists some $t_0 \ge 0$ and some $x_0$ such that 
$$
Y \to u^{Euler}(t_0,x_0,0) + u^{Prandtl}(t_0,x_0,Y), 
$$
is  spectrally unstable for Euler equations.  Then
for any $N$ and $s$ arbitrarily large and any $\theta > 0$ arbitrarily small,  there exists a sequence of solutions $u^\nu$ of Navier Stokes equations
with forcing term $f^\nu$,   a sequence of times $T^\nu$, and a constant $\sigma > 0$ such that,
\beq
 \forall t \le t_0, \qquad u^\nu(t ,\cdot,\cdot) =  u(t,\cdot,\cdot) \qquad \hbox{and} \qquad f^\nu(t,\cdot,\cdot) = 0,
\eeq
\beq 
\| f^\nu \|_{L^\infty([t_0,t_0 +  T^\nu], H^s)} \le \nu^N,
\eeq
\beq
\| u^\nu(t_0 + T^\nu,\cdot,\cdot) - u(t_0 + T^\nu,\cdot,\cdot) \|_{L^\infty} \ge \sigma \nu^\theta,
\eeq
\beq 
T^\nu \sim \nu^{1/2} C_0 \log \nu^{-1} \to 0,
\eeq
and such that  $u^\nu(t_0 + T^\nu,\cdot,\cdot)$ exhibits the scales $1$, $\nu^{1/2}$, $\nu^{3/4}$,
$\nu^{11/16}$ and $\nu^{13/16}$ in the $y$ variable.
\end{theorem}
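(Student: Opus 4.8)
The plan is to reduce Theorem~\ref{theomain2} to Theorem~\ref{theomain} by a boundary-layer rescaling, and then to run the same two-step instability argument developed for the shear-layer case. First I would use the change of variables that turns the Prandtl scaling into the shear-layer scaling: set $\tau = (t-t_0)/\sqrt{\nu}$, $Y = y/\sqrt{\nu}$, keep $x$ as is, and rescale the velocity and pressure accordingly. Under this rescaling the Navier--Stokes operator with viscosity $\nu$ acting on $u^\nu(t,x,y)$ becomes, to leading order, a Navier--Stokes-type operator with an effective viscosity $\sqrt{\nu}$ acting on the rescaled unknown, and the base profile $Y\mapsto u^{Euler}(t_0,x_0,0)+u^{Prandtl}(t_0,x_0,Y)$ plays the role of the shear profile $U_s$. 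The spectral instability hypothesis on this profile for Euler is exactly the hypothesis of Theorem~\ref{theomain}. The dependence of $u^{Euler}$ and $u^{Prandtl}$ on $t$ and $x$ near $(t_0,x_0)$ is slow on the $\tau$ and instability-wavelength scales, so it can be frozen at $(t_0,x_0)$ and the $x$-dependence and $t$-dependence absorbed into the forcing $f^\nu$, at a cost that is $o(1)$ times the size of the instability and hence harmless.

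Once the reduction is in place, the second step is to apply the construction behind Theorem~\ref{theomain} with $\nu$ replaced by the effective viscosity $\eps = \sqrt\nu$. That construction produces, over a time $C_0 \log \eps^{-1}$ in the $\tau$ variable, a solution whose deviation from the base flow reaches size $\eps^\theta$ in $L^\infty$ (in the $C^\infty$-but-not-analytic case) and which displays vertical scales $1$, $\eps^{1/2}$, $\eps^{3/8}$, $\eps^{5/8}$ in the rescaled variable $Y$. Translating back: the time becomes $T^\nu \sim \sqrt\nu\, C_0 \log \nu^{-1}\to 0$, consistent with the statement; the amplitude $\eps^\theta = \nu^{\theta/2}$, which after renaming $\theta$ gives the claimed $\sigma\nu^\theta$; and the $Y$-scales $1,\eps^{1/2},\eps^{3/8},\eps^{5/8}$, when multiplied by $\sqrt\nu$ to return to the $y$ variable, become $\nu^{1/2}$, $\nu^{1/2}\cdot\nu^{1/4} = \nu^{3/4}$, $\nu^{1/2}\cdot\nu^{3/16} = \nu^{11/16}$, $\nu^{1/2}\cdot\nu^{5/16} = \nu^{13/16}$, plus the ambient scale $1$ coming from $u^{Euler}$ itself. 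This matches the five scales $1,\nu^{1/2},\nu^{3/4},\nu^{11/16},\nu^{13/16}$ in the statement. The requirement $u^\nu = u$ and $f^\nu = 0$ for $t\le t_0$ is arranged by cutting off the perturbation smoothly in time so that it is switched on only for $\tau>0$; the bound on $\|f^\nu\|_{H^s}$ follows from the corresponding bound in Theorem~\ref{theomain} together with the fact that the $H^s$ norms pick up only fixed negative powers of $\nu$ from the rescaling, which are beaten by choosing $N$ large in the applied theorem.

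The main obstacle I expect is controlling the errors generated by the rescaling and the freezing of $(t,x)$ at $(t_0,x_0)$. Two points need care. First, the Prandtl base flow is not an exact shear solution of the rescaled Navier--Stokes equations — it is only an approximate one, with error terms coming from its $x$-dependence, its $t$-dependence, and the $O(\sqrt\nu)$ correctors in (\ref{Prandtl3})--(\ref{Prandtl4}); one must verify that all of these can be moved into $f^\nu$ without exceeding $\nu^N$, which constrains how the instability wavelength in $x$ (itself a negative power of $\eps$, hence a negative power of $\nu^{1/2}$) interacts with the $x$-localization scale around $x_0$. Second, the nonlinear instability argument of Theorem~\ref{theomain} must be robust enough to tolerate these extra forcing terms and the localization in $x$; concretely one needs the growth of the unstable mode and the subsequent secondary instability to dominate the rescaling errors throughout the interval $[0,T^\nu/\sqrt\nu]$ in $\tau$. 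Since Theorem~\ref{theomain} is already proved with an arbitrary small forcing of size $\nu^N$, there is enough slack: the strategy is to choose the localization scale in $x$ slowly growing (a small negative power of $\nu$), absorb the commutator errors, and invoke the theorem's conclusion verbatim. The remaining work is bookkeeping of exponents, which is routine given the machinery already established in the preceding sections.
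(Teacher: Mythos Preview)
Your overall strategy---rescale to zoom into the Prandtl layer, obtain Navier--Stokes with effective viscosity $\sqrt\nu$ around the Euler-unstable profile, and then run the two-step (fast then slow) instability behind Theorem~\ref{theomain}---matches the paper's proof, and your computation of $T^\nu$ and of the vertical scales $\nu^{1/2},\nu^{3/4},\nu^{11/16},\nu^{13/16}$ is correct.

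One point needs fixing: the rescaling must include $x$. With only $t\mapsto\sqrt\nu\,\tau$ and $y\mapsto\sqrt\nu\,Y$ but $x$ kept fixed, the equation does \emph{not} become Navier--Stokes with viscosity $\sqrt\nu$: the horizontal transport, pressure and diffusion terms acquire mismatched powers of $\nu$, so you cannot invoke the shear-layer construction as stated. The paper sets $t=t_0+\sqrt\nu\,\tau$, $x=x_0+\sqrt\nu\,X$, $y=\sqrt\nu\,Y$, which yields exactly Navier--Stokes with viscosity $\nu_1=\sqrt\nu$; the price is that the rescaled background then depends on $(\tau,X,Y)$ through the slow scale $\eps=\sqrt\nu$, i.e.\ it is of the form~(\ref{ueps}). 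For this reason the paper does not invoke Theorem~\ref{theomain} directly (which assumes a pure shear profile) but rather applies Theorem~\ref{theogeneralfast} for the first (fast) instability and then the secondary-instability step of Theorem~\ref{theomain} via Theorem~\ref{theogeneral}; these general theorems are precisely the machinery that absorbs the slow $(\tau,X)$-dependence into the forcing---what you describe informally as ``freezing at $(t_0,x_0)$''. With the corrected rescaling your argument and the paper's coincide.
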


In other word, in this case, Prandtl's layer is linearly unstable at time $t_0$, leading to a nonlinear instability which creates
a "viscous sublayer" of size $\nu^{3/4}$. This sublayer becomes itself unstable, and create two other sublayers, of
size $\nu^{11/16}$ and $\nu^{13/16}$.
By repeating the arguments of \cite{GN2}, if $u$ is analytic, it is even possible to take $\theta = 0$, but we will not detail this point here.
Note again that $\nu^{11/16}$ and $\nu^{13/16}$ are new sizes which, up to the best of our knowledge, has never been discussed before. 

 The following Theorem states that Prandtl's Ansatz is {\it generically false} in the case of Sobolev regularity.

\begin{theorem} \label{theomain4}
Let $u(t,x,y)$ be a smooth solution to Navier Stokes equations, which follows Prandtl's Ansatz, namely of the form (\ref{Prandtl2}),
(\ref{Prandtl3}) and (\ref{Prandtl4}).
Let us assume that there exists  $x_0$ such that 
\beq \label{assumpderiv}
 \partial_Y u^{Prandtl}(0,x_0,0) \ne 0.
\eeq
Let $\theta > 0$ be arbitrarily small.
Then
for any $N$ and $s$ arbitrarily large,   there exists a sequence of solutions $u^\nu$ of Navier Stokes equations
with forcing term $f^\nu$,  a sequence of times $T^\nu$, and a constant $\sigma > 0$ such that
\beq
\| u^\nu(0,\cdot,\cdot) - u(0,\cdot,\cdot) \|_{H^s} \le \nu^N ,
\eeq
\beq 
\| f^\nu \|_{L^\infty([0, T^\nu], H^s)} \le \nu^N,
\eeq
\beq
\| u^\nu(T^\nu,\cdot,\cdot) - u(T^\nu,\cdot,\cdot) \|_{L^\infty} \ge \sigma \nu^{1/4 + \theta} ,
\eeq
\beq 
T^\nu \sim \nu^{1/4} C_0 \log \nu^{-1} \to 0,
\eeq
and such that $u^\nu(T^\nu,\cdot,\cdot)$ exhibits the scales $1$, $\nu^{1/2}$, $\nu^{3/8}$ and $\nu^{5/8}$ in the $y$ variable.
\end{theorem}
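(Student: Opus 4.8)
The plan is to obtain Theorem~\ref{theomain4} as the Prandtl counterpart of Theorem~\ref{theofirst2}, by rescaling to the boundary-layer scale. Writing $u(t,x,y)=u^{Euler,\nu}(t,x,y)+u^{Prandtl,\nu}(t,x,y/\sqrt\nu)$ and introducing $(x,y,t)=\sqrt\nu\,(X,Y,\tau)$ — under which velocity and pressure are unchanged — the Navier--Stokes equations become the Navier--Stokes equations with \emph{effective viscosity} $\tilde\nu=\sqrt\nu$, and in $(X,Y)$ the Prandtl layer becomes an order-one shear layer, slowly varying in $\tau$ and $X$ (indeed $\partial_\tau u$ and $\partial_X u$ are $O(\sqrt\nu)$ in these variables). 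Up to replacing $\nu$ by $\tilde\nu=\sqrt\nu$ and absorbing this slow variation — harmless, just as the heat-driven drift of the shear layer is harmless in Theorem~\ref{theofirst2} — one is in the setting of Theorem~\ref{theofirst2}, and its conclusions translate back: a window $\tilde\nu^{-1/2}\log\tilde\nu^{-1}$ in $\tau$ becomes $T^\nu\sim\sqrt\nu\cdot\nu^{-1/4}\log\nu^{-1}\sim\nu^{1/4}C_0\log\nu^{-1}\to0$; an amplitude $\tilde\nu^{1/4+\theta'}=\nu^{1/8+\theta'/2}$ becomes (velocity being scale-invariant) a deviation $\ge\sigma\,\nu^{1/4+\theta}$ for $\nu$ small — the exponent $1/4$ being the value at which we choose to stop, well below the actual saturation level $\nu^{1/8}$; and the $Y$-scales $\tilde\nu^{-1/4},1,\tilde\nu^{1/4}$, multiplied by $\sqrt\nu$, become the $y$-scales $\nu^{3/8},\nu^{1/2},\nu^{5/8}$, together with the Euler scale $1$.

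Concretely one works near $(\tau,X)=(0,x_0\nu^{-1/2})$. On $[0,T^\nu]$ and over an $X$-window of width $\ell$ there, the base profile $Y\mapsto u^{Euler,\nu}(0,x,\sqrt\nu Y)+u^{Prandtl,\nu}(0,x,Y)$ differs from the frozen profile
$$
\bU(Y):=u^{Prandtl}(0,x_0,Y)-u^{Prandtl}(0,x_0,0)
$$
by $O(T^\nu)+O(\sqrt\nu\,Y)+O(\sqrt\nu+\sqrt\nu\,\ell)$ (Prandtl evolution; Taylor expansion of the Euler part near $y=0$; lower-order Ansatz terms and $x$-variation of $u^{Prandtl}$). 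The no-slip condition $u^{Euler}(0,x_0,0)+u^{Prandtl}(0,x_0,0)=0$ gives $\bU(0)=0$ and $\bU(+\infty)=u^{Euler}(0,x_0,0)=:U_+\ne0$ (generically), while (\ref{assumpderiv}) gives $\bU'(0)=\partial_Y u^{Prandtl}(0,x_0,0)\ne0$: $\bU$ is an order-one boundary-layer shear of exactly the type treated in Theorem~\ref{theofirst2}, and (\ref{assumpderiv}) provides precisely the non-degeneracy of the wall shear required by the Tollmien--Schlichting analysis of \cite{GGN}, \cite{Bian4} (no Rayleigh instability is used). An adiabatic argument then carries the unstable eigenmode of the frozen operator over to the slowly varying $u(t)$: near the lower marginal stability curve — wavenumber $\beta\sim\tilde\nu^{1/4}=\nu^{1/8}$, i.e. $\alpha=\beta/\sqrt\nu\sim\nu^{-3/8}$ in the original variable — a mode $e^{i\beta(X-c\tau)}\phi(Y)$ with eigenvalue $\lambda=-i\beta c$, $\Re\lambda\approx C_0\tilde\nu^{1/2}=C_0\nu^{1/4}$ (growth rate $\sim\nu^{-1/4}$ in $t$), structured in $Y$ on the scales $\beta^{-1}$, $1$ and $(\tilde\nu/\beta)^{1/3}\sim\nu^{1/8}$.

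I would then build an approximate solution $w^{app}$ of the Navier--Stokes perturbation equation linearised about $u(t)$: a WKB wave packet carried by this mode, concentrated in $x$ on a window of width $w$ with $\alpha^{-1}\ll w\ll1$, whose amplitude $a$ obeys the Landau equation (\ref{landau}) but is stopped at $\nu^{1/4+\theta}$ — well inside the weakly nonlinear regime, where $a$ grows essentially exponentially and the quadratic interactions among the generated harmonics are controlled by a direct Duhamel fixed point, so that the delicate saturation analysis at $\nu^{1/8}$ is not needed. Carrying the amplitude expansion and the eikonal/transport corrections to a large enough order — possible since $\partial_\tau u,\partial_X u=O(\sqrt\nu)$ — makes the residual $O(\nu^N)$ in $H^s$. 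Setting $u^\nu:=u+w^{app}$ and letting $f^\nu$ be this residual, $u^\nu$ solves the forced Navier--Stokes equations with $\|f^\nu\|_{L^\infty([0,T^\nu],H^s)}\le\nu^N$; with initial amplitude $\sim\nu^{N'}$ ($N'\ge N$) tuned so the linear growth first reaches $\nu^{1/4+\theta}$ at $T^\nu\sim\nu^{1/4}C_0\log\nu^{-1}$, one obtains $\|u^\nu(0)-u(0)\|_{H^s}\le\nu^N$ and $\|u^\nu(T^\nu)-u(T^\nu)\|_{L^\infty}\gtrsim\nu^{1/4+\theta}$ (already far above the $O(\sqrt\nu)$ Prandtl prediction), with $w^{app}$ carrying the four asserted scales.

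The technical heart, inherited from Theorem~\ref{theofirst2}, is the construction and control of this expansion: one needs $\nu$-uniform resolvent bounds for the Orr--Sommerfeld operator about $\bU$ at viscosity $\tilde\nu=\sqrt\nu$ near the lower branch — in particular that $k\beta c$ stays off its spectrum for the harmonics $k\ge2$, with sharp pointwise control of the eigenfunctions through the viscous sublayer of width $\sim(\tilde\nu/\beta)^{1/3}$ — together with estimates for the semigroup generated by the time-dependent linearised operator. The difficulty specific to the Prandtl setting, absent from Theorem~\ref{theofirst2}, is the genuine $x$- and $t$-dependence of the base flow: one must verify that localizing in $x$ on a window $w$ with $\alpha^{-1}\ll w\ll1$, freezing $u$ at $(0,x_0)$ for the spectral step, and the group-velocity drift and dispersion of the packet all produce only errors absorbable by the $O(\nu^N)$ forcing, and that the localization is preserved by the nonlinear estimates.
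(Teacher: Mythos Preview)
Your proposal is correct and follows essentially the same route as the paper: rescale by $\sqrt\nu$ so the effective viscosity is $\nu_1=\sqrt\nu$ and the Prandtl layer becomes an $O(1)$ shear profile with nonzero wall shear, then run the slow Tollmien--Schlichting instability of Theorem~\ref{theofirst2}. The paper has packaged the slowly-varying-in-$(t,x)$ background issue you flag at the end into its general Theorem~\ref{theogeneral} (applied here with $\eps=\nu_1$, hence $\delta=1>3/4$), so that its proof of Theorem~\ref{theomain4} is a one-line application of that theorem after the rescaling.
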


Thus Prandlt's Ansatz is  {\it generically false} in Sobolev spaces and only holds in small times in spaces with analytic or Gevrey regularity. 
Note that (\ref{assumpderiv}) may be replaced 
by $\partial_Y u^{Prandtl}(0,x,0) = 0$ for any $x$ and $\partial_Y^2 u^{Prandtl}(0,x_0,0) \ne 0$ for some $x_0$, up to additional
technicalities. This result can also be extended to the classical Blasius boundary layer \cite{Blasius3} in the whole space.

 Theorem \ref{theomain4}  describes the first nonlinear step in the evolution of a laminar flow (which follows Prandtl's assumption) towards
a more complex, or even turbulent one: an instability appears in Prandtl's layer, leading to a smaller scale.  A possibility is that
"roll like structures" form in this layer, structures which can then develop further instabilities.

\medskip

The rest of the paper is organized as follows.  In the second section
 we recall previous results on Orr Sommerfeld equations, design various functional spaces adapted
to them, and study in detail their inverse, both away and close to its eigenvalues. In the third section we prove a general instability theorem
which allows us to localise instabilities. In the fourth section we investigate the case of "fast instabilities"
and in the last part we prove the various theorems stated in this introduction.

%%%%%%

\subsubsection*{Notations}

%%%%%%

In all this article, "$\hbox{c.c.}$" is a shorthand for "complex conjugate".
Moreover, for any $\alpha \in \rit$, we define 
$$
\Delta_\alpha = \partial_y^2 - \alpha^2, \qquad \nabla_\alpha = (- i \alpha, \partial_y) .
$$
For any real number $t$, we define $\langle t \rangle$ by
$$
\langle t \rangle = 1 + |t| .
$$
We say that $f \lesssim g$ is there exists a constant $C$ such that $|f| \le C |g|$.

We draw the attention of the reader upon a change of definition of the horizontal wave number $\alpha$, which is rescaled
at (\ref{defitildealpha}).

%%%%%%%%%%%%%%%%%%%%%%%%%%%%%%%%%%%%%%%%%%%%%%%%%%%%

\section{Preliminaries  }

%%%%%%%%%%%%%%%%%%%%%%%%%%%%%%%%%%%%%%%%%%%%%%%%%%%%

%%%%%%%%%%%%%%%%%%%%%%%%%%%%%%%%%%

\subsection{Orr Sommerfeld equations   }

%%%%%%%%%%%%%%%%%%%%%%%%%%%%%%%%%%

Let us first introduce the classical Orr Sommerfeld equations. Let $U(y) = (U_s(y),0)$ be a time independent shear layer.
We define the linearised Navier Stokes operator $L_{NS}[U]$ by
$$
L_{NS}[U] \, v =  P \Bigl( (U \cdot \nabla) v +  (v \cdot \nabla) U - \nu \Delta v  \Bigr)
$$
together with $\nabla \cdot v = 0$ and Dirichlet boundary condition $v = 0$ on $y = 0$, 
where $P$ is Leray's projection on divergence free vector fields.
We will also use the linearised Navier Stokes operator in vorticity formulation $L_{NS}^\omega[U]$, defined by
$$
L_{NS}^\omega [U] \, \omega = (U \cdot \nabla) \omega + (v \cdot \nabla) \Omega-\nu \Delta \omega
$$
where $\omega = \nabla \times u$ is the vorticity of $v$ and $\Omega$ the vorticity of the shear layer $U$.

We recall that the solution $v(t)$ of linearised Navier Stokes equations 
is given by  Dunford's formula
\beq \label{contour1}
v(t,x,y) = {1 \over 2 i \pi} \int_\Gamma e^{\lambda t} \Bigl( L_{NS}[U] + \lambda \Bigr)^{-1} v(0,\cdot,\cdot) \; d\lambda
\eeq
in which $\Gamma$ is a contour on the "right" of the spectrum.
 We are therefore led to study the resolvent of $L_{NS}[U]$, namely to
study the equation
\beq \label{resolvant}
\Bigl( L_{NS}[U] + \lambda \Bigr) v = f,
\eeq
where $f(x,y)$ is a given forcing term and $\lambda$ a complex number.
To take advantage of the divergence free condition, we introduce the stream function $\psi$ of $v$ and take its Fourier
transform in $x$, with dual variable $\alpha$, which leads to look for solutions of (\ref{resolvant}) of the form
$$
v = \nabla^\perp \Bigl( e^{i \alpha x  } \psi(y) \Bigr) .
$$
According to  traditional notations \cite{Reid}, we introduce the complex number $c$, defined by  
$$
\lambda = - i \alpha c .
$$
We also take the Fourier transform of the forcing term $f$
$$
f(x,y) = \Bigl( f_1(y),f_2(y) \Bigr) e^{i \alpha x  } .
$$
Taking the curl of (\ref{resolvant}) and dividing by $i \alpha$, we get the classical Orr Sommerfeld equation, namely
\beq \label{Orrmod0}
Orr_{\alpha,c,\nu}(\psi) =  (U_s - c)  (\partial_y^2 - \alpha^2) \psi - U_s''  \psi  - {\nu \over i \alpha}  (\partial_y^2 - \alpha^2)^2 \psi 
=  i {\nabla \times f \over \alpha}
\eeq
where
$$
\nabla \times (f_1,f_2) = i \alpha f_2 - \partial_y f_1 .
$$
Note the $i \alpha^{-1}$ factor in the right hand side of (\ref{Orrmod0}), which appears when we go from $L_{NS}^\omega$
to the Orr Sommerfeld equations.
Note that these equations are simply the resolvent of linearised Navier Stokes equations in vorticity formulation, and divided
by $i \alpha$.

For a given $c$, close to $0$, we define the critical layer $y_c$ to be the solution of
$$
U(y_c) = c .
$$
In the whole paper, both $c$ and $y_c$ will be of order $\nu^{1/4}$. We say that $y$ is in the critical layer if $y$ is of order $\nu^{1/4}$.

The Orr Sommerfeld equation has been fully studied in \cite{Bian4,Zhang}. 
It turns out that any shear layer $(U_s(y),0)$ with $U_s'(0) \ne 0$ is  linearly unstable in the long wave regime.
We refer to \cite{Bian4} for more details in the case
of holomorphic and concave profiles, and to \cite{Zhang} for the general case
(see also \cite{zhang2}  and \cite{yang-zhang} for the compressible case).  

More precisely, there exists two functions $\alpha_\pm(\nu)$, such that
if $\alpha_-(\nu) < | \alpha | < \alpha_+(\nu)$, then there exists only one eigenvalue $c(\alpha,\nu)$
of $L_{NS}$ with $\Im c(\alpha,\nu) > 0$. 
Moreover $\alpha_-(\nu) \sim C_- \nu^{1/4}$ and $\alpha_+(\nu) \sim C_+ \nu^{1/6}$ for some constants $C_-$ and $C_+$, as $\nu \to 0$.

We will focus on the lower branch of instability, namely on instabilities arising when $\alpha$ is of order $\alpha_0 \nu^{1/4}$ 
for some fixed, positive and large enough $\alpha_0$.
In this case, $\Re c(\alpha,\nu)$ and $\Im c(\alpha,\nu)$ are both of order $\nu^{1/4}$.
We refer to section \ref{linear} for more details on $c(\alpha,\nu)$ and the associated eigenmode $\psi_{\alpha,\nu}$.

Let us first consider the equation 
\beq \label{Orrzero}
Orr_{c,\alpha,\nu}(\psi) = 0,
\eeq
without taking care of the boundary conditions at $0$ and at infinity.
As  this equation is a fourth order ordinary differential equation, it has four independent solutions. It can be proved
that two of them have a "slow" behavior, one going to $0$ as $y$ goes to $+ \infty$ and the other one being unbounded
at infinity. We will call
$\phi_{s,-}$ and $\phi_{s,+}$ these two solutions. There also exist two solutions with a "fast" behavior, one going quickly to $0$ as
$y$ goes to $+ \infty$ and the other going to infinity. We will call them $\phi_{f,-}$ and $\phi_{f,+}$. 

It turns out that $\phi_{f,-}$ and $\phi_{f,+}$ can be expressed as primitives of the classical Airy functions \cite{Bian4,Zhang},
whereas $\phi_{s,-}$ and $\phi_{s,+}$ are perturbations of solutions of the Rayleigh equation, obtained by setting $\nu = 0$ in 
Orr Sommerfeld equations, namely
\beq \label{Rayleigh}
(U_s - c) (\partial_y^2 - \alpha^2) \psi - U_s'' \psi = 0 .
\eeq
To recall the construction of the Green function which will be used in section \ref{defiGreen}, 
we need to describe in details the fast mode $\phi_{f,-}$. 
First we can normalise it in such a way that 
\beq \label{normalisationphi}
\phi_{f,-}(0) = 1.
\eeq
Second, in the case of holomorphic profiles $U_s(y)$,
 for $y$ close to $0$, following Langer's transformation, we introduce the function $g(y)$, defined by
\beq \label{explicitg}
g(y) = y_c + \Bigl( {3 \over 2 \sqrt{U_s'(y_c)}} \int_{y_c}^y  \sqrt{U_s(z) - c} \, dz \Bigr)^{2/3},
\eeq 
and the complex number $\gamma$, defined by
\beq \label{defigamma}
\gamma = \Bigl(  {i \alpha U_s'(y_c) \over \nu} \Bigr)^{1/3} .
\eeq
Note that, in the sequel, $\gamma$ will be of order $\nu^{-1/4}$.
Then, if $Ai$ denotes the classical Airy function, we have
$$
\phi_{f,-}(y) = Ai_a(y) \Bigl[ 1 + O(\gamma^2) \Bigr], 
$$
where
$$
Ai_a(y) = Ai \Bigl( \gamma (g(y) - y_c) \Bigr) .
$$
The analysis is similar if $U_s(y)$ is smooth but not analytic (see \cite{Zhang}).
We further define $\mu(y)$, the "speed of variation" of $\phi_{f,-}(y)$, by
\beq \label{defimu}
\mu(y) = {\partial_y Ai_a(y) \over Ai_a(y) } .
\eeq
As proved in \cite{Bian4}, $\mu(y) \to C \gamma^{3/2}$ when $\gamma (y - y_c) \to + \infty$, namely when $y$ is away from the
critical layer, and $\mu(y)$ is of order $\gamma$ if $\gamma (y - y_c)$ is of order $1$, namely when $y_c$ is in the critical layer.

We will also use that $\phi_{f,-}$ and $\phi_{f,+}$ have a fast exponential behaviour. Namely, for any arbitrarily large
positive constant $C_1$, provided $\nu$ is small enough,
\beq \label{decrease}
\Bigl| {\phi_{f,-}(y) \over \phi_{f,-}(x) } \Bigr| \lesssim e^{- C_1 |y - x|}
\eeq
provided $x \le y$, and similarly for $x \ge y$ and for $\phi_{f,+}$.

We will also need  some results about the adjoint of Orr Sommerfeld equation, introduced in \cite{Bian1,Bian4} and detailed in \cite{Bian6}.
Let us first recall the computation of the adjoint of $L_{NS}$. Let $V_1 = (u_1,v_1)$ and $V_2= (u_2,v_2)$
be two divergence free vector fields with corresponding stream functions $\psi_1(x,y)$ and $\psi_2(x,y)$ and corresponding vorticities
$\omega_1(x,y)$ and $\omega_2(x,y)$.
Then
\begin{equation*} \begin{split}
\int_\Omega L_{NS} V_1 \cdot \bar V_2 \, dx \, dy &=
\int_\Omega L_{NS}^\omega \omega_1 \cdot \bar \psi_2 \, dx \, dy
\\ &= \int_\Omega \Bigl[ U i \alpha \omega_1 + v_1 \partial_y \Omega - \nu \Delta_\alpha \omega_1 \Bigr] \cdot \bar \psi_2 \, dx \, dy
\\ &=   \int_\Omega \Bigl[ - U i \alpha \Delta_\alpha \psi_1 
- i \alpha \psi_1 \partial_y \Omega + \nu \Delta_\alpha^2 \psi \Bigr] \cdot \bar \psi_2 \, dx \, dy
\\ &= \int_\Omega \psi_1 \cdot 
\Bigl[ - i \alpha \Delta_\alpha (U \bar \psi_2) - i \alpha \bar \psi_2 \partial_y \Omega + \nu \Delta_\alpha^2 \bar \psi_2 \Bigr] 
\, dx \, dy .
\end{split} \end{equation*}
This leads to the following definition of the "adjoint Orr Sommerfeld equation", which, 
following the usual definition in physics books (in particular \cite{Reid}, page $254$), reads
\beq \label{Orrad}
Orr^t_{\alpha,c,\nu}(\psi) :=  (\partial_y^2 - \alpha^2) (U_s -  c)   \psi 
- U_s''  \psi  
+ { \nu \over i \alpha}   (\partial_y^2 - \alpha^2)^2 \psi,
\eeq
with boundary conditions $\psi(0) = \partial_y \psi(0) = 0$. Adjoint Orr Sommerfeld equations are the resolvent of the
adjoint of the linearised Navier Stokes equations in vorticity formulation, after a division by $i \alpha$.

If $c$ is an eigenvalue of $Orr_{\alpha,c,\nu}$ (more precisely an eigenvalue of $L_{NS}^\omega$), 
then $c$ is also an eigenvalue of $Orr^t_{\alpha,c,\nu}$ (more precisely $(L_{NS}^\omega)^t$).
Moreover, if $\phi_1$ is an eigenvector of $Orr_{c,\alpha,\nu}$ with corresponding eigenvalue $c_1$,
and $\phi_2^t$ an eigenvector of $Orr_{c,\alpha,\nu}^t$ with corresponding eigenvalue $c_2$, then 
$\phi_1$ and $\phi_2^t$ are orthogonal in the sense
\beq \label{ortho}
\int_{\rit^+} \nabla^\perp \phi_1(y) \cdot \nabla^\perp \bar \phi_2^t (y) \, dy = \delta_{c_1 = c_2} .
\eeq
In other terms, $Orr_{\alpha,c,\nu}$ and $Orr_{\alpha,c,\nu}^t$ are "adjoint" with respect to the $L^2$ scalar product of the
corresponding velocities.

Following \cite{Bian4}, the associated eigenvector $\phi^t(y)$ is bounded by 
\beq \label{boundpsit}
| \phi^t (y) | \lesssim \nu^{-1/4} e^{- C \nu^{-1/4}}  + e^{-C y} + e^{- \alpha y}
\eeq
for some positive constant $C$.

%%%%%%%%%%%%%%%%%%%%%%%%%%%%%%%

\subsection{Linear instability \label{linear}}

%%%%%%%%%%%%%%%%%%%%%%%%%%%%%%%

In \cite{Bian4} (in the analytic case) and \cite{Zhang} (in the $C^\infty$ case), 
it is proven that any shear layer $U_s(y)$ such that $U_s'(0) \ne 0$ is linearly unstable.
More precisely,  if $\alpha_0 >0$ is large enough
and $\nu$ small enough, then, for  $\alpha = \alpha_0 \nu^{1/4}$,
 there exists an unstable mode $\psi_\alpha^{unstable}(t,x,y)$,
which is of the form
\beq \label{formunstable}
\psi_\alpha^{unstable}(t,x,y) = e^{i \alpha (x - c(\alpha,\nu) t)} \widetilde \psi^{unstable}_\alpha(y) ,
\eeq
where $c(\alpha,\nu)$ is of order $O(\nu^{1/4})$, and where, for every $n \ge 0$ and $p \ge 0$,
\beq \label{formunstable2}
| \partial_y^n \partial_\alpha^p \widetilde \psi^{unstable}_{\alpha}(y) | \lesssim 
| \alpha |^n e^{- C_0 | \alpha | y} + e^{- C_0 y} + \nu^{-(n - 1) /4} e^{- C_0 y / \nu^{1/4}} ,
\eeq
where $C_0$ is a small positive constant.

An important point is that the corresponding vorticity 
$$
\widetilde \omega_{\alpha}^{unstable}(y)  =- (\partial_y^2 - \alpha^2) \widetilde \psi_\alpha^{unstable}(y)
$$ 
has a better behavior at infinity than $\widetilde \psi_\alpha^{unstable}$, namely, for every $n > 0$,
\beq
| \partial_y^n  \partial_\alpha^p \widetilde \omega_{\alpha}^{unstable}(y) | \lesssim  
 e^{- C_0 y} + \nu^{-(n + 1) /4} e^{- C_0 y / \nu^{1/4}} .
\eeq
Note that there are three scales in $y$: $O(\nu^{-1/4})$, corresponding to a large scale irrotational recirculation area, $O(1)$, the typical
size of the shear layer, and $O(\nu^{1/4})$, size of the so called critical layer, which, in the case of the lower marginally stable
branch, is stuck to the boundary. Moreover, $\psi_\alpha^{unstable}(t,x,y)$ evolves over times of order $\alpha^{-1} c(\alpha,\nu)^{-1} \sim \nu^{-1/2}$.

The dispersion relation $c(\alpha,\nu)$ may easily be computed and
 numerical evidences \cite{Bian1} indicate that
there exists  $\alpha_0(\nu) > C_-$, converging to some positive constant $\alpha_0 > C_-$ such that 
\beq \label{nearmax}
\Re \lambda \Bigl( \alpha_0(\nu) \nu^{1/4},\nu \Bigr) =  \max_\alpha \Re \lambda(\alpha,\nu) ,
\eeq
provided $\nu$ is small enough. To simplify the notation, we will omit the $\nu$ dependency of $\alpha_0(\nu)$.

We also note that, for any $\alpha$, using the explicit form of Orr Sommerfeld equations,
\beq \label{symmetrylambda}
\lambda(-\alpha,\nu) = \bar \lambda(\alpha,\nu).
\eeq
In particular, $\lambda(\alpha,\nu)$ and $\lambda(-\alpha,\nu)$ have the same real part.

As, from now on, we consider $\alpha$ of order $\nu^{1/4}$, we rescale $\alpha$ and introduce
\beq \label{defitildealpha}
\tilde \alpha = {\alpha \over \nu^{1/4}} .
\eeq
We further rescale $\alpha_0(\nu)$ by a factor $\nu^{1/4}$ and define $\tilde \alpha_0(\nu) = \alpha_0(\nu) / \nu^{1/4}$.
We also rescale $\lambda(\alpha,\nu)$, which is of the form
$$
\lambda(\alpha,\nu) = \nu^{1/2}  \tilde \lambda(\tilde \alpha,\nu) = - i \nu^{1/4} \tilde \alpha c(\tilde \alpha,\nu),
$$
where $\tilde \lambda(\tilde \alpha,\nu)$ is a smooth function of $\tilde \alpha$ and $\nu$. 
We will not rescale $c(\tilde \alpha,\nu)$ which appears in the definition of the Orr Sommerfeld equation.
Note that $c(\tilde \alpha,\nu)$ is of order $\nu^{1/4}$.

\medskip

{\it From now on we drop the tildes on $\tilde \alpha$, $\tilde \alpha_0$ and $\tilde \lambda$.
For the sake of simplicity, we omit the $\nu$ in the notation of $\tilde \lambda$ and $\alpha_0$. 
}

\medskip

\noindent With this  rescaling, (\ref{formunstable}) becomes
\beq \label{formunstablebbis}
\psi_\alpha^{unstable}(t,x,y) = e^{i \alpha  \nu^{1/4}  x + \nu^{1/2} \lambda(\alpha) t} \widetilde \psi^{unstable}_\alpha(y) ,
\eeq
and  (\ref{formunstable2}) becomes: for any integers $n \ge 0$ and $p \ge 0$,
\beq \label{formunstable3}
| \partial_y^n \partial_\alpha^p \widetilde \psi^{unstable}_{\alpha}(y) | \lesssim 
\nu^{n/4}  e^{- C_0 \nu^{1/4}  y} + e^{- C_0 y} + \nu^{-(n - 1) /4} e^{- C_0 y / \nu^{1/4}} ,
\eeq
provided $C_0$ is small enough.

 We  "localise" the instability in the $x$ variable by building a "wave package".  Let $\chi$ be a non negative smooth function,
 compactly supported in $[-1,+1]$, with unit integral. Let $\beta > 0$ be a small positive number. 
 We define the "wave package" $\Psi_{lin}$ by
\beq \label{local}
\Psi_{lin}(t,x,y) = \nu^{-\beta} \int_{\alpha_0 - \nu^\beta}^{\alpha_0 + \nu^\beta}
 \chi \Bigl( {\alpha - \alpha_0 \over \nu^\beta} \Bigr) \widetilde \psi_{\alpha}^{unstable}(y) 
 e^{i \alpha \nu^{1/4} x +  \nu^{1/2}  \lambda(\alpha) t} \, d\alpha ,
\eeq
with corresponding vorticity $\omega_{lin}(t,x,y)$.

As $\tilde \psi_{\alpha}^{unstable}$ is smooth in all its variables, we can use classical stationary phase arguments to study
the long time behavior of $\Psi_{lin}$. The phase is
$$
\theta(t,x) = i \nu^{1/4} \alpha x + i \nu^{1/2} \lambda(\alpha,\nu) t 
$$
and is stationary when
$$
\partial_\alpha \theta = i \nu^{1/4} x + i \nu^{1/2}  \lambda'(\alpha) t = 0.
$$
The real part of $\partial_\alpha \theta$ vanishes at $\alpha_0$, where $\Re \lambda$ reaches its maximum (\ref{nearmax}).
The imaginary part of $\partial_\alpha \theta$ vanishes if $x / t = - \nu^{1/4} \Im \lambda'(\alpha_0)$.
Thus, $\Psi_{lin}$ grows like $e^{\nu^{1/2} \Re  \lambda(\alpha_0)   t} / \sqrt{\nu^{1/2} t}$,  namely over time scales of order $\nu^{-1/2}$,
and "travels" with the "group velocity" $c_\sigma$, where
\beq \label{groupvelocity}
c_\sigma = -  \nu^{1/4} \Im \lambda'(\alpha_0) .
\eeq
It rapidly decays  away from $c_\sigma t$, with a typical horizontal scale $\nu^{-\beta - 1/4} \gg \nu^{-1/4}$.
Note that the "localized" instability $\Psi_{lin}$ travels in $x$ with a speed $c_\sigma$ of order $O(\nu^{1/4})$, 
which refers to the notion of "convective instability". Note also that, during the characteristic evolution time $O(\nu^{-1/2})$, $\Psi_{lin}$
travels over a length of order $O(\nu^{-1/4})$, namely of order of the characteristic size in $x$ of the perturbation.
In particular, 
\beq \label{maxPsilin}
\max_x | \Psi_{lin}(t,x) | \sim C {e^{\nu^{1/2} \Re \lambda( \alpha_0) t} \over  \langle \sqrt{\nu^{1/2} t} \rangle } 
\eeq
for some constant $C$.

However, as we are only interested on times of order $\nu^{-1/2} \log \nu^{-1}$, the wave packet only travels over lengths of order
$\nu^{-1/4} \log \nu^{-1}$, which may be neglected with respect to the $\nu^{-\beta - 1/4}$ decay.
In fact, a crude bound on $\Psi_{lin}$ will be sufficient and may be obtained by rewriting it under the form
\beq \label{Psibis}
\Psi_{lin}(t,x,y) = \int_{\alpha_0 - \nu^\beta}^{\alpha_0 + \nu^\beta}
\phi(\alpha,t) e^{i \alpha \nu^{1/4} x} \widetilde \psi_{\alpha}^{unstable}(y) \, d\alpha,
\eeq
where
$$
\phi(\alpha,t) = \nu^{-\beta} \chi \Bigl( {\alpha - \alpha_0  \over \nu^\beta} \Bigr) e^{\nu^{1/2} \lambda(\alpha) t} .
$$
Now, for $t \lesssim \nu^{-1/2} \log \nu^{-1}$ and for any $N \ge 0$,   
$$
| \partial_\alpha^N \phi(\alpha,t) | \lesssim \nu^{-\beta} \nu^{-\beta N} e^{\nu^{1/2} \Re \lambda(\alpha_0) t}.
$$
Thus, integrating (\ref{Psibis}) by parts in $\alpha$, we get, for any integers $N \ge 0$ and $n \ge 0$, using (\ref{formunstable3}),
\beq \label{Psiter}
| \partial_y^n \Psi_{lin}(t,x,y) | \lesssim {
\nu^{n/4} e^{- C_0    \nu^{1/4} y } + e^{- C_0 y} + \nu^{- (n - 1) /4} e^{- C_0 y / \nu^{1/4}} 
\over 1 + | \nu^{\beta + 1/4} x |^N}  e^{\nu^{1/2} \Re \lambda(\alpha_0) t}  ,
\eeq
for some small constant $C_0$ depending on $N$ and $n$.
This bound is less precise than the stationary phase analysis, but appears to be sufficient in our case.
Note that we have introduced a new spatial size in $x$, of order $\nu^{-\beta - 1/4}$.

%%%%%%%%%%%%%%%%%%%%%%%%%%%%%%%%%%

\subsection{Functional spaces}

%%%%%%%%%%%%%%%%%%%%%%%%%%%%%%%%%%

The previous paragraph motivates the introduction of the spaces $X^{n,p}$, $X^n_\omega$ and $Y^n$
to describe the regularity of stream functions and vorticities. 

\begin{defi}
Let $C_0 > 0$.
Let $n$ and $p$ be two integers.
We say that a function $f$ belongs to $X^{n,p}$
if there exists a constant $C \ge 0$ such that for every $0 \le j \le n$ and every $y \ge 0$,
\beq  
| \partial_y^j f(y) | \le C \Bigl[ \nu^{j/4}  e^{- C_0 \nu^{1/4}  y} + e^{- C_0 y}  + \nu^{-(j+p)/4} e^{- C_0 y / \nu^{1/4}}  \Bigr]
\eeq
and we denote  by $\| f \|_{X^{n,p}}$ the best possible constant $C$.

We moreover say that a function $f$ belongs to $X^n_\omega$
if there exists $C$ such that for every $0 \le j \le n$ and every $y \ge 0$,
\beq
| \partial_y^j f(y) | \le C \Bigl[  e^{- C_0 y}  + \nu^{-(j+1)/4} e^{- C_0 y / \nu^{1/4}}  \Bigr]
\eeq
and we denote by $\| f \|_{X^n_\omega}$ the best possible constant $C$.

We also  define the space $Y^n$ to be the space of  functions $\psi$ such that 
$$
\| f \|_{Y^n} = \| f \|_{X^{n,-1}} + \| (\partial_y^2 - \alpha^2) f \|_{X^{n-2}_\omega} < + \infty.
$$
\end{defi}

In particular, we note that, provided $C_0$ is small enough,
 for any $n \ge 0$, $\psi_\alpha^{unstable} \in X^{n,-1}$, $\omega_\alpha^{unstable} \in X^n_\omega$,
and thus $\psi_\alpha^{unstable} \in Y^n$.

Now if $\psi^1$ and $\psi^2$ are two stream functions we define the transport term ${\cal Q}(\psi^1,\psi^2)$ by
$$
{\cal Q}(\psi^1,\psi^2) = - \nabla_\alpha^\perp \psi^1 \cdot \nabla_\alpha \Delta_\alpha \psi^2 
= (v_1 \cdot \nabla) \omega_2,
$$
where $v_1$ is the velocity related to $\psi_1$ and $\omega_2$ the vorticity related to $\psi_2$.

\begin{lemma}
If $\psi \in Y^n$, then the related velocities $u_1 = - \partial_y \psi$ and $u_2 = i \alpha \nu^{1/4} \psi$,
and the related vorticity $\omega = - \Delta_\alpha \psi$ satisfy
\beq \label{u1}
\| u_1 \|_{X^{n-1,0}} \lesssim \| \psi \|_{Y^n}, \qquad
\|  u_2 \|_{X^{n,-1}} \lesssim  \nu^{1/4} | \alpha | \, \| \psi \|_{Y^n} ,
\qquad \| \omega \|_{X^{n-2,\omega}} \lesssim \| \psi \|_{Y^n}
\eeq
and
\beq \label{u4}
\| \phi(y)^{-1} u_2 \|_{X^{n-1,0}} \lesssim \nu^{1/4} |\alpha | \,  \| \psi \|_{Y^n},
\eeq
where $\phi(y) = y / (1 + y)$.
Moreover, for any stream functions $\psi^1$ and $\psi^2$, we have
\beq \label{u5}
\| {\cal Q} (\psi^1,\psi^2) \|_{X^{n-1}_\omega} \lesssim \nu^{1/4} | \alpha | \, \| \psi^1 \|_{Y^n} \| \psi^2 \|_{Y^n} .
\eeq
\end{lemma}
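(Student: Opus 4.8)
The strategy is to argue entirely at the level of the pointwise definitions of $X^{n,p}$, $X^n_\omega$ and $Y^n$, using in addition that the stream functions in play carry the no-slip condition $\psi(0)=\partial_y\psi(0)=0$. The three bounds in \eqref{u1} are essentially immediate. For $u_1=-\partial_y\psi$: applying one $y$-derivative to an element of $X^{n,-1}$ sends the elementary profile $\nu^{j/4}e^{-C_0\nu^{1/4}y}$ to $\nu^{(j+1)/4}e^{-C_0\nu^{1/4}y}\le\nu^{j/4}e^{-C_0\nu^{1/4}y}$, leaves $e^{-C_0 y}$ unchanged, and sends $\nu^{-(j-1)/4}e^{-C_0 y/\nu^{1/4}}$ to $\nu^{-j/4}e^{-C_0 y/\nu^{1/4}}$ — exactly the profile of $X^{n-1,0}$ — whence $\|u_1\|_{X^{n-1,0}}\le\|\psi\|_{X^{n,-1}}\le\|\psi\|_{Y^n}$. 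The bound on $u_2=i\alpha\nu^{1/4}\psi$ is pure homogeneity of the norm, and the bound on $\omega=-(\partial_y^2-\alpha^2)\psi$ is one of the two summands defining $\|\psi\|_{Y^n}$.

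The weighted bound \eqref{u4} is the first one requiring work. Since $\psi(0)=0$ one may write $\psi(y)/y=\int_0^1\partial_y\psi(ty)\,dt$, so that multiplication by $\phi(y)^{-1}$ trades the singularity at $y=0$ for at most one extra $y$-derivative: this is what produces the passage from $Y^n$ to $X^{n-1,0}$. Differentiating under the integral sign, $\partial_y^j(\psi(y)/y)=\int_0^1 t^j\,\partial_y^{j+1}\psi(ty)\,dt$, and one is reduced to estimating integrals of the three elementary profiles, splitting $y$ into the critical layer $y\lesssim\nu^{1/4}$ and the outer region. In the outer region $\phi(y)^{-1}$ is bounded and one uses directly that $\psi\in X^{n,-1}$ together with the better behaviour of $\omega=-(\partial_y^2-\alpha^2)\psi\in X^{n-2}_\omega$ — the crucial point being that $X^n_\omega$ contains no slow profile $e^{-C_0\nu^{1/4}y}$, so that $\partial_y^2\psi$ inherits the good decay of $\omega$ and of $\psi$; near the boundary one uses the vanishing $\psi(0)=\partial_y\psi(0)=0$ directly. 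The prefactor $\nu^{1/4}|\alpha|$ is inherited from $u_2=i\alpha\nu^{1/4}\psi$.

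For the transport term, take the Fourier transform in $x$ and expand, writing $u_1^{(1)}=-\partial_y\psi^1$, $u_2^{(1)}=i\alpha\nu^{1/4}\psi^1$, $\omega_2=-\Delta_\alpha\psi^2$,
$$
{\cal Q}(\psi^1,\psi^2)=i\alpha\nu^{1/4}\,u_1^{(1)}\,\omega_2\;+\;u_2^{(1)}\,\partial_y\omega_2 .
$$
The first term is bounded directly: $u_1^{(1)}\in X^{n-1,0}$ and $\omega_2\in X^{n-2}_\omega$ by \eqref{u1}, and one checks by inspection (via Leibniz) that a product of an $X^{n-1,0}$ profile with an $X^n_\omega$ profile decays at least as fast as the $X^n_\omega$ profile, so that the product lies in $X^{n-1}_\omega$; the factor $\nu^{1/4}|\alpha|$ is explicit. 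The second term is the delicate one, because $\partial_y\omega_2$ is one derivative — hence a factor $\nu^{-1/4}$ in the critical layer — worse than $\omega_2$. One writes $u_2^{(1)}\,\partial_y\omega_2=\bigl(\phi(y)^{-1}u_2^{(1)}\bigr)\bigl(\phi(y)\,\partial_y\omega_2\bigr)$: the first factor is controlled by the weighted estimate \eqref{u4}, while $\phi(y)\,\partial_y\omega_2$ — since $\phi(y)\sim y\sim\nu^{1/4}$ in the critical layer — is again of the size of $\omega_2$, so that one closes with the same product estimate. This is precisely what the weight $\phi$ is introduced for, and it yields \eqref{u5}.

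I anticipate the main obstacle to be the bookkeeping in the critical layer $\{y\sim\nu^{1/4}\}$, where the vorticity and $\partial_y^2\psi$ are of size $\nu^{-1/4}$, $\partial_y\psi$ of size $1$, $\psi$ of size $\nu^{1/4}$, and the weight $\phi(y)^{-1}$ is singular, so that one must track each power of $\nu^{1/4}$ with care. The structural fact making \eqref{u4}, and through it \eqref{u5}, go through is that $\omega$ lies in $X^{n-2}_\omega$, a space which admits no slow-decaying component, combined with the vanishing $\psi(0)=\partial_y\psi(0)=0$; this rigidity is what keeps $\psi(y)/y$ under control away from the boundary. Verifying it, together with the routine but lengthy product estimates for the profile spaces, completes the proof.
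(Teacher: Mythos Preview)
Your proposal is correct and follows essentially the same route as the paper: the bounds in \eqref{u1} come straight from the definitions, \eqref{u4} from the divergence-free relation $\partial_y u_2=-\partial_x u_1$ (which, integrated from $y=0$ using $u_2(0)=0$, is exactly your representation $\psi(y)/y=\int_0^1\partial_y\psi(ty)\,dt$), and \eqref{u5} from combining \eqref{u1} with \eqref{u4} via the weighted splitting $u_2^{(1)}\partial_y\omega_2=(\phi^{-1}u_2^{(1)})(\phi\,\partial_y\omega_2)$ that you spell out. The paper's own proof is three sentences and you have simply fleshed out each of them; the only superfluous ingredient is your appeal to $\partial_y\psi(0)=0$, which the paper does not invoke and which your integral formula for $\partial_y^j(\psi/y)$ does not actually require.
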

Note the gain of a factor $\nu^{1/4}$ in (\ref{u5}).

\begin{proof} 
The bounds on $u_1$ and $u_2$ directly come from the definition of $Y^n$. 
Moreover, using the divergence free condition, we have $\partial_y u_2 = - \partial_x u_1$,
which leads to (\ref{u4}).
Next, (\ref{u5}) is then a consequence of (\ref{u1}) and (\ref{u4}).
\end{proof}

\begin{definition}
For any $\lambda$, we define ${\cal S}(\lambda)$ to be the set of functions of the form
$$
\psi(t,y) = \sum_{k = 0}^{+\infty} (\sqrt{\nu} t)^ k \psi_k(y) e^{  \nu^{1/2} \lambda t} ,
$$
such that, for any  $n \ge 0$,
$$
\| \psi \|_{n} = \sum_{k=0}^{+\infty} \| \psi_k \|_{Y^n} < + \infty,
$$
and such that only a finite number of functions $\psi_k$ are non identically zero.
\end{definition}

Note the scale of time through $\sqrt{\nu} t$.

%%%%%%%%%%%

\subsection{Inverse of Orr Sommerfeld operator \label{defiGreen}}

%%%%%%%%%%%

The aim of this section is to get bounds on the solution $\psi$ to
\beq \label{Orrff}
Orr_{\alpha,c,\nu}(\psi) = f
\eeq
where $f(y)$ is some source term, and on the solution $\omega$ to 
\beq \label{LNSS}
\partial_t \omega + L_{NS}^\omega \omega =   \Delta_\alpha g,
\eeq
where $g(t,y)$ is  of the form
\beq \label{serieg}
g(t,y) = \sum_{k \ge 0} (\sqrt{\nu} t)^k g_k(y) e^{- i \alpha c t}
\eeq
for some smooth functions $g_k(y)$.

The key ingredient to solve (\ref{Orrff}) is the accurate description of the Green function of Orr Sommerfeld equation, 
obtained in \cite{Bian4,GN3}, and recalled in section \ref{descriptionGreen},
which allows a detailed  study of the inverse of the Orr Sommerfeld operator, away from its eigenvalue $c(\alpha,\nu)$,
close to its eigenvalue $c(\alpha,\nu)$ and at its eigenvalue $c(\alpha,\nu)$. 

Equation (\ref{serieg}) can then be solved using (\ref{Orrff}).
We recall that a factor $i \alpha^{-1} \nu^{-1/4}$ appears when we go from $L_{NS}^\omega$ to $Orr_{c,\alpha,\nu}$.
Note the interplay between vorticity ($f$ and $\omega$) and stream functions ($\psi$ and $g$).

Let us begin by an unformal study of a caricature of (\ref{LNSS}), namely of the ordinary differential equation
\beq \label{caricature}
\partial_t \phi - \eps \phi = e^{\lambda t} \,
\eeq
where $\eps > 0$ is small. If $| \lambda - \eps | \gtrsim \eps$, then a solution of (\ref{caricature}) is
$$
\phi(t) = {e^{\lambda t} \over \lambda - \eps} 
$$
and is of order $O(\eps^{-1})$. If on the contrary $| \lambda - \eps | \lesssim \eps$, a better solution is given by
$$
\phi(t) = {e^{\lambda t} - e^{\eps t} \over \lambda - \eps} = \sum_{n \ge 1} (\lambda - \eps)^{n-1} {t^n \over n!}  e^{\eps t} 
\lesssim \eps^{-1} \sum_{n \ge 1} {(\eps t)^n \over n !} e^{\eps t} .
$$
If $\lambda = \eps$, then one solution is $\phi(t) = \eps^{-1} (\eps t) e^{\eps t}$.
Hence the solution of (\ref{caricature}) is always of size $O(\eps^{-1})$, and the singularity at $\lambda = \eps$
may be handled by introducing polynomials in $\eps t$.

%%%%

\subsubsection{Description of the Green function \label{descriptionGreen}}

%%%%

As recalled above, there exist four independent solutions to the Orr Sommerfeld equation (\ref{Orrzero}).
Using these four particular solutions $\phi_{s,\pm}$ and $\phi_{f,\pm}$, the Green function $G_{\alpha,c,\nu}$ of the
Orr Sommerfeld equation, together with its boundary conditions, may be explicitly constructed. 
Namely, following \cite{Bian4,GN3}, we  can decompose $G_{\alpha,c,\nu}$ into
$$
G_{\alpha,c,\nu} = G^{int} + G^b,
$$
where $G^{int}$ is an "interior" Green function, which does not take into account the boundary conditions at $0$, 
and where $G^b$ is a "boundary" Green function, which recovers these conditions.
Moreover, $G^{int}$ is explicitly given by
\beq \label{int}
G^{int}(x,y) = a_+(x)  \phi_{s,+}(y) 
+ {b_+(x) }  {\phi_{f,+}(y) \over \phi_{f,+}(x)} 
\quad \hbox{for} \quad y < x,
\eeq
and by
\beq \label{int2}
G^{int}(x,y) = a_-(x)  \phi_{s,-}(y)
+ {b_-(x)} {\phi_{f,-}(y) \over \phi_{f,-}(x)} 
\quad \hbox{for} \quad y  > x,
\eeq
where, as proved in \cite{Bian4} (taking into account the change of the definition of $\alpha$ by a factor $\nu^{1/4}$),
\beq \label{boundsG1}
a_\pm(x) = O \Bigl( { \alpha \nu^{1/4} \over \nu \mu^2(x)} \Bigr), \qquad b_\pm(x) = O(1).
\eeq
We note that, whereas $b_\pm(x)$ is bounded uniformly in $x$, $a_\pm$ is of order 
$O(\alpha \nu^{-3/4} \gamma^{-2}) = O(\nu^{-1/4})$ if $x$ is of order $\nu^{1/4}$ and
of order $O(1)$ if $\nu^{-1/4} x$ is large. In other word, the response of Orr Sommerfeld equation is large
if the force is close to the critical layer.
Moreover,
$$
\int_0^1 | a_\pm(x) | \, dx < + \infty.
$$
By construction,
$$
Orr_{\alpha,c,\nu} \, G^{int}(x,y) = \delta_x .
$$
As a consequence, for any function $f$, if we define
\beq \label{Green}
\psi^{int}(y) = \int_{\mathbb{R}^+} G^{int}(x,y) f(x) \, dx ,
\eeq
then
\beq \label{Green2}
Orr_{\alpha,c,\nu}(\psi^{int}) = f .
\eeq
In general, $\psi_{int}$ does not satisfy the boundary conditions at $y = 0$, but goes to $0$ at infinity.

To recover the boundary conditions $G_{\alpha,c,\nu}(x,0) = \partial_y G_{\alpha,c,\nu}(x,0) = 0$ on the Green function, we add a linear combination
of $\phi_{s,-}(y)$ and $\phi_{f,-}(y)$, of the form
$$
G^b(x,y) = d_s(x) \phi_{s,-}(y)  + d_f(x) \phi_{f,-}(y) .
$$
The coefficients $d_s(x)$ and $d_f(x)$ must then satisfy
\beq \label{defiGb2}
\left( \begin{array}{cc}
\phi_{s,-}(0) & \phi_{f,-}(0) \cr
\partial_y \phi_{s,-}(0) & \partial_y \phi_{f,-}(0) \end{array} \right)
\left( \begin{array}{c}
d_s(x) \cr d_f(x) \cr \end{array} \right) 
= - \left( \begin{array}{c} 
G^{int}(x,0) \cr \partial_y G^{int}(x,0) \cr \end{array} \right) .
\eeq
Let $M$ be the matrix of the left hand side of (\ref{defiGb2}). Its determinant is
$$
\det M = \phi_{s,-}(0) \partial_y \phi_{f,-}(0) - \partial_y \phi_{s,-}(0) \phi_{f,-} (0)  
$$
and vanishes exactly at the eigenvalue $c(\alpha,\nu)$.
Direct computations \cite{Bian4,GN3} then show that
\beq \label{boundsG2}
d_s(x) = O  \Bigl( { \alpha \nu^{1/4} \over \nu \mu^2(x) } | c - c(\alpha,\nu) |^{-1}  \Bigr), \qquad
d_f(x) = O \Bigl(   | c - c(\alpha,\nu) |^{-1} \Bigr).
\eeq
In particular, $G^{b}$ is singular at $c(\alpha,\nu)$, which is not the case of $G^{int}$ which is bounded everywhere.
Note that $d_s(x)$ is of order $O( | c - c(\alpha,\nu) |^{-1})$ if $\nu^{-1/4} x$ is large, 
and else is of order $O(\nu^{-1/4} | c - c(\alpha,\nu) |^{-1})$. In particular,
\beq \label{boundintds}
\int_0^1 | d_s(x) | \, dx \lesssim | c - c(\alpha,\nu) |^{-1} .
\eeq
It is  easier to understand the geometry of the problem directly on $\psi^{int}$.
We want to add a corrector $\psi^b(y)$ of the form
\beq \label{defipsib}
\psi^b(y) =  d_s \phi_{s,-}(y)  + d_f \phi_{f,-}(y) ,
\eeq
such that
\beq \label{defiGb4}
M
\left( \begin{array}{c}
d_s \cr d_f \cr \end{array} \right) 
= - \left( \begin{array}{c} 
\psi^{int}(0) \cr \partial_y \psi^{int}(0) \end{array}  \right) .
\eeq
When $c = c(\alpha,\nu)$, $M$ is of rank $1$ and the range of $M$ is spanned by $e_1 = (\phi_{s,-}(0),\partial_y \phi_{s,-}(0))$.
Thus, (\ref{defiGb4}) can be solved if and only if $(\psi^{int}(0), \partial_y \psi^{int}(0))$ is colinear to $e_1$.
The range of $Orr_{\alpha,c(\alpha,\nu),\nu}$ is thus characterized by 
\beq \label{charrange}
\phi_{s,-}(0) \partial_y \psi^{int}(0) - \partial_y \phi_{s,-}(0) \psi^{int}(0) = 0,
\eeq
where $\psi^{int}(0)$ and $\partial_y \psi^{int}(0)$ may be seen as linear forms on $f$.
Thus (\ref{charrange}) may be seen as the necessary and sufficient condition for $f$ to be in the range of $Orr_{\alpha,c(\alpha,\nu),\nu}$
and characterizes the range of this operator, of codimension $1$. Let $\psi_3$ be orthogonal to this range.
Then $\psi_3$ is an eigenvector of $Orr^t_{\alpha,c(\alpha,\nu),\nu}$, with corresponding eigenvalue $c(\alpha,\nu)$.

Let $\lambda(\alpha,\nu) = - i \alpha c(\alpha,\nu)$ be an eigenvalue of $L_{NS}$.
The next step is to decompose an arbitrary divergence free vector field $v$ into
$$
v = B_1 v + B_2 v,
$$
where $B_1 v$ is in the kernel of  $L_{NS} + \lambda(\alpha,\nu)$ 
and $B_2 v$ is in the range of $L_{NS} + \lambda(\alpha,\nu)$.

\begin{lemma} \label{propdecomposition}
There exist two linear operators $B_1$ and $B_2$ such that, for any $v$, 
$$
B_1 v \in \ker \, ( L_{NS} + \lambda(\alpha,\nu)),\qquad
B_2 v \in \hbox{\rm Im} \, ( L_{NS} + \lambda(\alpha,\nu)) .
$$
 Moreover, $B_1$ and $B_2$ are bounded operator from  $X^{n,0}$ to $X^{n,0}$ for any $n$.

There  also exists a linear operator $B_3$, inverse of $L_{NS} +  \lambda(\alpha,\nu)$ 
on $\hbox{\rm Im} \,  (L_{NS} + \lambda(\alpha,\nu))$, such that,
for any $v$ in $\hbox{\rm Im } (L_{NS} + \lambda(\alpha,\nu))$,
\beq \label{pseudoinv}
(L_{NS} + \lambda(\alpha,\nu))  \, B_3 v = v .
\eeq
Moreover, $B_3$ is a continuous operator in the sense that, for any integer $n$,
$$
\| B_3 f \|_{X^{n,0}} \lesssim \nu^{-1/4} \| f \|_{X^{n,0}}.
$$
\end{lemma}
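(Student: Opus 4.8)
\emph{Proof plan.} The operators $B_1$ and $B_2$ are nothing but the spectral projections attached to the (simple) eigenvalue $\lambda(\alpha,\nu)$, made explicit through the biorthogonality relation (\ref{ortho}). Let $\psi^{unstable}_\alpha$ be the unstable eigenmode of Section \ref{linear} and $\psi_3$ the eigenvector of $Orr^t_{\alpha,c(\alpha,\nu),\nu}$ constructed above (orthogonal to the range of $Orr_{\alpha,c(\alpha,\nu),\nu}$), with associated velocities $v^{unstable}$ and $v_3$. By (\ref{ortho}) the pairing $\int_{\rit^+} v^{unstable}\cdot\bar v_3\, dy$ is nonzero, so after normalizing $\psi_3$ we may assume it equals $1$. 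I would then set
$$
B_1 v = \Bigl( \int_{\rit^+} v\cdot\bar v_3\, dy \Bigr)\, v^{unstable}, \qquad B_2 v = v - B_1 v .
$$
Then $B_1 v \in \CC\, v^{unstable} = \ker(L_{NS}+\lambda(\alpha,\nu))$, and $B_1$ is a projection since $B_1 v^{unstable}=v^{unstable}$. Because $v_3$ spans the cokernel — equivalently, (\ref{charrange}) characterizes the range of the Orr Sommerfeld operator, and hence that of $L_{NS}+\lambda(\alpha,\nu)$, as $\{\,v : \int v\cdot\bar v_3 =0\,\}$ — one gets $B_2 v \in \mathrm{Im}(L_{NS}+\lambda(\alpha,\nu))$, and $X^{n,0}=\ker\oplus\mathrm{Im}$ topologically.

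For the boundedness of $B_1$ and $B_2$ on $X^{n,0}$: the eigenmode velocity $v^{unstable}$ lies in $X^{n,0}$ for every $n$ (from (\ref{formunstable3}) and the Lemma above, $u_1\in X^{n-1,0}$ and $u_2\in X^{n,-1}\subset X^{n,0}$), and $v_3=\nabla^\perp\psi_3$ is, by (\ref{boundpsit}) differentiated, bounded and integrable on $\rit^+$ uniformly in $\nu$ (the term $\nu^{-1/4}e^{-C\nu^{-1/4}}$ being negligible). Since $X^{n,0}\hookrightarrow L^\infty$, one has $|\int v\cdot\bar v_3|\lesssim \|v\|_{L^\infty}\|v_3\|_{L^1}\lesssim \|v\|_{X^{n,0}}$, whence $\|B_1 v\|_{X^{n,0}}\lesssim\|v\|_{X^{n,0}}$ and $\|B_2 v\|_{X^{n,0}}\lesssim\|v\|_{X^{n,0}}$.

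To build $B_3$, given $v\in\mathrm{Im}(L_{NS}+\lambda(\alpha,\nu))$ I would pass to vorticity: one looks for a divergence free $w$ with $(L_{NS}^\omega+\lambda(\alpha,\nu))\omega_w=\nabla\times v$, equivalently, after the division by $i\alpha$ and the rescaling (\ref{defitildealpha}) which introduce the factor $i\alpha^{-1}\nu^{-1/4}$, a stream function $\psi_w$ with $Orr_{\alpha,c(\alpha,\nu),\nu}(\psi_w)=i\alpha^{-1}\nu^{-1/4}\,\nabla\times v$. Since $v$ is in the range, the right hand side satisfies the solvability condition (\ref{charrange}); I solve with the Green function of Section \ref{descriptionGreen}: $\psi^{int}(y)=\int_{\rit^+}G^{int}(x,y)\,[i\alpha^{-1}\nu^{-1/4}\nabla\times v](x)\,dx$ solves the equation and decays at infinity, and I correct the boundary conditions by $\psi^b=d_s\phi_{s,-}+d_f\phi_{f,-}$ with $(d_s,d_f)$ solving (\ref{defiGb4}). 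At $c=c(\alpha,\nu)$ the matrix $M$ has rank one, so (\ref{defiGb4}) has a one–parameter family of solutions; the ambiguity is precisely $\ker(L_{NS}+\lambda(\alpha,\nu))$, which I remove by defining $B_3 v:=B_2 w$, where $w$ is the velocity of $e^{i\alpha\nu^{1/4}x}(\psi^{int}+\psi^b)$. By construction $(L_{NS}+\lambda(\alpha,\nu))B_3 v=v$, which is (\ref{pseudoinv}).

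Finally, the estimate $\|B_3 f\|_{X^{n,0}}\lesssim\nu^{-1/4}\|f\|_{X^{n,0}}$ would be obtained by inserting into the above formulas the coefficient bounds (\ref{boundsG1}) on $a_\pm,b_\pm$, the bounds (\ref{boundsG2})–(\ref{boundintds}) on $d_s,d_f$ evaluated at $c=c(\alpha,\nu)$ (where the singular factor $|c-c(\alpha,\nu)|^{-1}$ is replaced by the solvability of (\ref{defiGb4})), the exponential decay (\ref{decrease}) of $\phi_{f,\pm}$, and the pointwise behaviour of $\phi_{s,\pm}$ and of $\mu(y)$, then matching against the definitions of $X^{n,0}$, $X^n_\omega$, $Y^n$ and the Lemma relating velocities to stream functions. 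I expect the two genuinely delicate points — where essentially all the work is — to be: (i) controlling the corrector $\psi^b$ at the rank-one degeneracy of $M$ exactly at the eigenvalue, so that it stays bounded even though the off-eigenvalue formulas blow up; and (ii) the bookkeeping of the powers of $\nu^{1/4}$, coming from the rescaling (\ref{defitildealpha}), from the conversion factor $i\alpha^{-1}\nu^{-1/4}$ between $L_{NS}^\omega$ and $Orr_{\alpha,c,\nu}$, from the relation $u_2=i\alpha\nu^{1/4}\psi$, and from the fact that $a_\pm$ and $d_s$ are of size $\nu^{-1/4}$ in the critical layer while having $O(1)$ integral — all of which must combine to yield exactly the loss $\nu^{-1/4}$ and nothing worse.
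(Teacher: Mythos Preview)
Your proposal is correct and follows essentially the same approach as the paper: the projections $B_1,B_2$ are defined via the pairing with the adjoint eigenvector (the paper's $v_{NS}^t$ is your $v_3$), and $B_3$ is built by solving $Orr_{\alpha,c(\alpha,\nu),\nu}\psi=i\alpha^{-1}\nu^{-1/4}\omega$ with the interior Green function plus a boundary corrector $\psi^b=d_s\phi_{s,-}+d_f\phi_{f,-}$, using the solvability condition (\ref{charrange}) to make (\ref{defiGb4}) well posed at the eigenvalue. The paper's proof is terser --- it simply asserts that once (\ref{charrange}) holds the coefficients $d_s,d_f$ are bounded by $\nu^{-1/4}$ and moves on --- whereas you correctly single out points (i) and (ii) as the places where the actual work lies; your extra step of projecting the output by $B_2$ to kill the kernel ambiguity is a harmless refinement the paper omits.
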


\begin{proof}
We already know that $c(\alpha,\nu)$ is a simple eigenvalue, thus the dimension of the kernel 
of $L_{NS} + \lambda(\alpha,\nu)$ is $1$. Let $v_{NS}$ be an eigenvector of
$L_{NS} + \lambda(\alpha,\nu)$. Then $B_1 v$ can be searched under the form $\phi(v) v_{NS}$ for some scalar $\phi(v)$.
Similarly the adjoint operator $L_{NS}^t + \lambda(\alpha,\nu)$ has a kernel of dimension $1$, spanned by some eigenvector $v_{NS}^t$.
As $v_{NS}^t$ is orthogonal to the range of $L_{NS} + \lambda(\alpha,\nu)$, we have
$$
( B_2 v,  v_{NS}^t)  =  0,
$$
thus
$$
\phi(v) = {(v , v_{NS}^t) \over (v_{NS}, v_{NS}^t)}.
$$
In particular,
\beq \label{defiB1}
B_1 v =  {(v , v_{NS}^t) \over (v_{NS}, v_{NS}^t)} v_{NS}, \qquad B_2 v = v - B_1 v.
\eeq
We note, using (\ref{ortho}, that $(v_{NS},v_{NS}^t)$ does not vanish \cite{Bian1} and is of order $O(1)$,  thus,
$$
| \phi(v) | \lesssim \| v \|_{X^{n,0}} .
$$
In particular, $B_1$ and $B_2$ are bounded operators from $X^{n,0}$ to $X^{n,0}$ for any $n \ge 0$.

We now construct a pseudo inverse of $L_{NS} + \lambda$, defined on its image. 
Let $v$ be in the range of $L_{NS} + \lambda$. Let $\omega$ be the vorticity of $v$. We have to solve
$$
Orr_{\alpha,c,\nu} \psi = i {\omega \over \alpha \nu^{1/4}} .
$$
Then, by definition, $\omega$ satisfies
 (\ref{charrange}). As a consequence we can solve the equation (\ref{defiGb4}), and the scalars
 $d_s$ and $d_f$ are bounded by $\nu^{-1/4}$. Thus $\psi_b$ defined by (\ref{defipsib}) is also bounded by $\nu^{-1/4}$.
We then define $B_3 f$ by
$$
B_3 f = \nabla^\perp( \psi^{int} + \psi^b),
$$
where $\psi^{int}$ is defined by (\ref{Green}).
By construction, (\ref{pseudoinv}) is satisfied, which ends the proof. 
\end{proof}

%%%%

\subsubsection{Inverse when $| c - c(\alpha,\nu) | \gtrsim \nu^{1/4}$}

%%%%

In all this paragraph we assume that $| c - c(\alpha,\nu) | \gtrsim \nu^{1/4}$.
The solution of (\ref{Orrff}) is then  given by 
\beq \label{Green10}
\psi(y) = \int_{\mathbb{R}^+} G_{\alpha,c,\nu}(x,y) f(x) \, dx .
\eeq
The explicit expression of $G_{\alpha,c,\nu}$, together with (\ref{boundsG1}) and (\ref{boundsG2}), allows us to bound $\omega$ and 
$\psi$ in terms of $f$.

\begin{lemma} \label{away}
For any integer $n$, we have, for any $f(y) \in X^n_\omega$,
\beq \label{away1} 
\| Orr^{-1}_{\alpha,c,\nu} f \|_{Y^n} \lesssim   \nu^{-1/4}  \| f \|_{X^n_\omega} 
\eeq
provided $| c - c(\alpha,\nu) | \gtrsim \nu^{1/4}$.
\end{lemma}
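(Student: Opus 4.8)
The plan is to estimate $\psi = \int_{\RR^+} G_{\alpha,c,\nu}(x,y) f(x)\,dx$ directly from the explicit decomposition $G_{\alpha,c,\nu} = G^{int} + G^b$, using the bounds \eqref{boundsG1}, \eqref{boundsG2} on the coefficients $a_\pm, b_\pm, d_s, d_f$, the exponential decay \eqref{decrease} of the fast modes, and the slow/fast structure of $\phi_{s,\pm}, \phi_{f,\pm}$. Since we are in the regime $|c - c(\alpha,\nu)| \gtrsim \nu^{1/4}$, the singular factors $|c-c(\alpha,\nu)|^{-1}$ appearing in $d_s, d_f$ are all $\lesssim \nu^{-1/4}$, which is precisely the loss we are allowed. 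Set $\psi^{int}(y) = \int G^{int}(x,y) f(x)\,dx$ and $\psi^b(y) = d_s \phi_{s,-}(y) + d_f \phi_{f,-}(y)$, so $\psi = \nabla^\perp$-stream-function of this sum; we must check $\psi \in X^{n,-1}$ and $\Delta_\alpha \psi \in X^{n-2}_\omega$ (the two pieces of the $Y^n$ norm).

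First I would handle $\psi^{int}$. Split the $x$-integral into $y < x$ and $y > x$. For the fast contribution $b_\pm(x)\,\phi_{f,\pm}(y)/\phi_{f,\pm}(x)$: since $|b_\pm| = O(1)$ and $|\phi_{f,-}(y)/\phi_{f,-}(x)| \lesssim e^{-C_1|y-x|}$ with $C_1$ as large as we like, the convolution against $f \in X^n_\omega$ produces a term that decays like the slowest scale in $f$, i.e. like $e^{-C_0 y} + \nu^{-(j+1)/4}e^{-C_0 y/\nu^{1/4}}$ after $j$ derivatives — here one uses that $\partial_y^j$ hitting $\phi_{f,\pm}(y)$ brings down factors of $\mu(y)$, which is $O(\gamma) = O(\nu^{-1/4})$ in the critical layer and $O(\gamma^{3/2})=O(\nu^{-3/8})$ away from it, but weighted against the exponential the worst case is absorbed into the $\nu^{-(j+1)/4}e^{-C_0y/\nu^{1/4}}$ template, and no extra $\nu^{-1/4}$ beyond what $X^n_\omega$ already allows. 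For the slow contribution $a_\pm(x)\phi_{s,\pm}(y)$: the key facts are $\int_0^1 |a_\pm(x)|\,dx < +\infty$ (so the $x$-integral converges with an $O(1)$ constant, using also that $a_\pm$ decays at infinity and $f$ is integrable), and that $\phi_{s,\pm}$ are Rayleigh-type solutions whose $y$-decay matches the $e^{-C_0\nu^{1/4}y} + e^{-C_0 y}$ template of $X^{n,-1}$ (the slow modes carry no $\nu^{1/4}$-scale). This gives $\|\psi^{int}\|_{X^{n,-1}} \lesssim \|f\|_{X^n_\omega}$, with no loss yet; the $\nu^{-1/4}$ enters only through $\psi^b$.

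Next, the boundary corrector: from \eqref{defiGb4}, $(d_s,d_f)^T = -M^{-1}(\psi^{int}(0), \partial_y\psi^{int}(0))^T$. The quantities $\psi^{int}(0)$ and $\partial_y\psi^{int}(0)$ are bounded by $\|f\|_{X^n_\omega}$ by the previous step (with a derivative costing at most one $\mu(0) = O(\nu^{-1/4})$ factor on the fast mode, hence $|\partial_y\psi^{int}(0)| \lesssim \nu^{-1/4}\|f\|_{X^n_\omega}$ — actually one should track this carefully, but it is dominated by what follows). The matrix $M^{-1}$ has entries of size $O(|c-c(\alpha,\nu)|^{-1}) \lesssim \nu^{-1/4}$ in the worst components by \eqref{boundsG2}; more precisely $d_s = O(\nu^{-1/4}|c-c(\alpha,\nu)|^{-1}) \lesssim \nu^{-1/2}\|f\|$ looks too large, so here one must use the refined statement that the \emph{relevant} combination is $O(|c-c(\alpha,\nu)|^{-1}\|f\|_{X^n_\omega})$ — i.e. the $\nu^{-1/4}$ in $d_s$ is compensated because it multiplies $\phi_{s,-}(y)$ which near $y=0$ is itself small in the appropriate weighted sense, or because $\psi^{int}(0)$ is correspondingly small; this bookkeeping is exactly the "codimension-one range" mechanism of \eqref{charrange}. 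The cleanest route is to invoke Lemma \ref{propdecomposition}'s construction verbatim: $\psi^b = d_s\phi_{s,-} + d_f\phi_{f,-}$ with $d_s,d_f$ bounded by $\nu^{-1/4}$ times $\|f\|_{X^n_\omega}$ in this non-resonant regime, and then $\|d_s\phi_{s,-}\|_{X^{n,-1}} + \|d_f\phi_{f,-}\|_{X^{n,-1}} \lesssim \nu^{-1/4}\|f\|_{X^n_\omega}$, using that $\phi_{s,-} \in X^{n,-1}$ (normalized, slow decay) and $\phi_{f,-} \in X^{n,-1}$ after accounting for its $\nu^{-1/4}$-derivative scaling via the $\nu^{-(j+p)/4}$ weight with $p=-1$.

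Finally, the vorticity bound: $\Delta_\alpha\psi = \Delta_\alpha\psi^{int} + \Delta_\alpha\psi^b$. Since $Orr_{\alpha,c,\nu}(\psi^{int}) = f$ by \eqref{Green2} and $\phi_{s,\pm}, \phi_{f,\pm}$ solve the homogeneous Orr–Sommerfeld equation, one reads off $\Delta_\alpha\psi$ from the equation: $(U_s - c)\Delta_\alpha\psi - U_s''\psi - \frac{\nu}{i\alpha}\Delta_\alpha^2\psi = f$, so $\Delta_\alpha\psi$ inherits the $X^{n-2}_\omega$ decay of $f$ plus lower-order contributions from $\psi$ (already controlled) and the biharmonic term (which, by the same fast-mode decay, stays within the $X^{n-2}_\omega$ template). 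For $\psi^b$ one computes $\Delta_\alpha\phi_{s,-}$ and $\Delta_\alpha\phi_{f,-}$ directly: $\Delta_\alpha\phi_{f,-}$ is again exponentially localized at scale $\nu^{1/4}$ with the right $\nu^{-(j+1)/4}$ weights, and $\Delta_\alpha\phi_{s,-}$ is bounded (slow mode). Collecting, $\|\Delta_\alpha\psi\|_{X^{n-2}_\omega} \lesssim \nu^{-1/4}\|f\|_{X^n_\omega}$, and together with the $X^{n,-1}$ bound this yields \eqref{away1}.

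\textbf{Main obstacle.} The delicate point is not any single estimate but the bookkeeping of the critical-layer factors: every $y$-derivative falling on a fast mode costs $\mu(y)$, which is $O(\nu^{-1/4})$ only inside the critical layer and genuinely larger ($O(\nu^{-3/8})$) outside it, and simultaneously the coefficients $a_\pm, d_s$ blow up like $\nu^{-1/4}$ near $x = y_c$ while staying $O(1)$ for $x$ away from the layer. Making sure that when these are convolved together the output still fits \emph{exactly} the weighted template defining $X^{n,-1}$ and $X^{n-2}_\omega$ — with the single overall $\nu^{-1/4}$ loss and not more — is where the argument really lives. The integrability bounds $\int_0^1|a_\pm| < \infty$ and \eqref{boundintds} are precisely what prevents the critical-layer singularity in $x$ from producing extra powers of $\nu^{-1}$, so the proof should foreground those, together with the strong exponential separation \eqref{decrease} which makes the $y<x$ and $y>x$ pieces essentially decouple.
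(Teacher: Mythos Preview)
Your overall strategy matches the paper's: decompose $G_{\alpha,c,\nu}=G^{int}+G^b$, show $\psi^{int}$ costs no power of $\nu$, and isolate the $\nu^{-1/4}$ loss in $\psi^b$. The treatment of $\psi^{int}$ is essentially the paper's (four integrals, integrability of $a_\pm$ for the slow pieces, the exponential bound \eqref{decrease} for the fast pieces), and your extra paragraph on $\Delta_\alpha\psi$ is a reasonable elaboration of what the paper dismisses as ``derivatives may be bounded in a similar way''.

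The genuine gap is in your handling of $\psi^b$. You switch to the scalar picture $(d_s,d_f)^T=-M^{-1}(\psi^{int}(0),\partial_y\psi^{int}(0))^T$, correctly notice that naive bookkeeping seems to give $\nu^{-1/2}$, and then try to repair this by appealing to the ``codimension-one range mechanism'' \eqref{charrange} and to Lemma~\ref{propdecomposition}. Both appeals are misplaced: \eqref{charrange} and Lemma~\ref{propdecomposition} concern the \emph{resonant} case $c=c(\alpha,\nu)$, where $M$ is singular and one needs the range condition to solve at all. Here $|c-c(\alpha,\nu)|\gtrsim\nu^{1/4}$, $M$ is invertible, and no such mechanism is available or needed. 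Your argument, as written, does not close this step.

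The paper avoids the issue entirely by staying at the Green-function level: write
\[
\psi^b(y)=\phi_{s,-}(y)\int_{\rit^+} d_s(x)f(x)\,dx+\phi_{f,-}(y)\int_{\rit^+} d_f(x)f(x)\,dx,
\]
and bound the $x$-integrals directly using \eqref{boundsG2} and, crucially, the integrated estimate \eqref{boundintds}: $\int_0^1|d_s(x)|\,dx\lesssim|c-c(\alpha,\nu)|^{-1}\lesssim\nu^{-1/4}$. Since $|f(x)|\lesssim\|f\|_{X^n_\omega}e^{-C_0x}$, this gives $|\int d_s(x)f(x)\,dx|\lesssim\nu^{-1/4}\|f\|_{X^n_\omega}$ immediately, and similarly for $d_f$. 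You actually flag \eqref{boundintds} in your ``Main obstacle'' paragraph as the key ingredient, but never deploy it in the argument itself; doing so is exactly what resolves your $\nu^{-1/2}$ worry without any appeal to the resonant machinery.
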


\begin{proof}
We  split $G_{\alpha,c,\nu}$ into $G^{int}$ and $G^b$. The Green function $G^{int}$ leads to the contribution
\begin{equation*}
\begin{split}
\psi^{int}(y) &=  \int_{x \le y} a_-(x) f(x) \phi_{s,-}(y) \, dx 
+   \int_{x \ge y} a_+ (x) f(x) \phi_{s,+}(y) \, dx\\
&\quad+   \int_{x \le y} b_-(x) f(x) {\phi_{f,-}(y) \over \phi_{f,-}(x)} \, dx 
+   \int_{x \ge y} b_+ (x) f(x) {\phi_{f,+}(y) \over \phi_{f,+}(x)} \, dx.
\end{split}
\end{equation*}
The first term of $\psi^{int}(y)$ may be bounded by
$$
  | \phi_{s,-}(y) | \, \| f \|_{X^n_\omega} \int_{\rit^+} | a_-(x) | e^{-C_0 \nu^{1/4} x} \, dx \lesssim   \| f \|_{X^n_\omega} e^{- C_0 \nu^{1/4} |y |} 
 $$
since the integral of $a_-(x) e^{-C_0 x}$ is bounded. 
The second term is bounded by
$$
| \phi_{s,+}(y) | \, \int_{x \ge y} |a_+(x)| \, | f(x) | \, dx .
$$
If $ y \le 1$, as the integral of $a_+(x)$ is bounded, this integral is bounded by 
$$
 e^{C_0 \nu^{1/4} y} \| f \|_{X^n_\omega} \int_{\rit^+} | a_+(x) | e^{-C_0 x} \, dx  \lesssim \| f \|_{X^n_\omega} 
  \lesssim   \| f \|_{X^n_\omega}  e^{- C_0 \nu^{1/4} y}.
$$
If on the contrary $y \ge 1$, then $a_+$ is bounded, leading to a bound by
\begin{equation*} \begin{split}
 | \phi_{s,+}(y) | \, \int_{x \ge y}  | f(x) | \, dx 
& \lesssim   | \phi_{s,+}(y) |  \,  \| f \|_{X^n_\omega}
\int_{x \ge y} e^{- C_0 x} \, dx \lesssim e^{C_0 \nu^{1/4} y - C_0 y}   \| f \|_{X^n_\omega}
\\&\lesssim e^{- C_0 \nu^{1/4} y}  \| f \|_{X^n_\omega}
\end{split} \end{equation*}
since  $|f(x)| \le \| f \|_{X^n_\omega} e^{-C_0 x}$.

Let us turn to the third integral defining $\psi^{int}(y)$. Using (\ref{decrease}), 
this third integral is bounded by
$$
 \| f \|_{X^n_\omega}  \int_{x \le y} e^{- C_0 x - C_1 (y - x)} \, dx 
 \lesssim  \| f \|_{X^n_\omega}  e^{- C_0 y} 
 \lesssim   \| f \|_{X^n_\omega}   e^{- C_0 \nu^{1/4} y}.
 $$
The fourth integral is similar.

The term in $G^b$ leads to a contribution 
$$
\psi^b(y) =\phi_{s,-}(y)  \int d_s(x) f(x) \, dx 
+\phi_{f,-}(y)\int d_f(x) f(x) \, dx .
$$
As $d_f(x)$ is bounded we have
$$
| \psi^b(y) | \le { | \phi_{s,-}(y)| \, \| f \|_{X^n_\omega}   \over | c - c(\alpha,\nu) |} \int_{\rit_+} | d_s(x) | e^{-C_0 x} \, dx 
+  { | \phi_{f,-}(y) | \, \| f \|_{X^n_\omega}  \over | c - c(\alpha,\nu) |}  \int_{\rit_+} e^{-C_0 x} \, dx.
$$
As the integral of $d_s(x) e^{-C_0 x}$ is finite, we obtain
$$
| I_2 | \lesssim   { e^{- C_0 \nu^{1/4} y} \over | c - c(\alpha,\nu) |}  \| f \|_{X^n_\omega}
\lesssim   \nu^{-1/4}  e^{-  C_0 \nu^{1/4}  y}  \| f \|_{X^n_\omega} ,
$$
which ends the pointwise bound on $Orr^{-1}_{\alpha,c,\nu} f(y)$.
Derivatives may be bounded in a similar way.
Note that the $\nu^{-1/4}$ factor comes from the "boundary layer part" $G^b$ of the Green function only.
\end{proof}

We now turn to the resolution of (\ref{LNSS})

\begin{lemma}
If $g(t,y) \in S(\lambda)$, then the solution $\psi_\alpha$ to (\ref{LNSS})  lies in ${\cal S}(\lambda)$,
and for any positive integer $n$,
$$
\| \psi_\alpha \|_{n} \lesssim \nu^{-1/2} \|  g \|_{n}  .
$$
\end{lemma}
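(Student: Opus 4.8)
The plan is to look for $\psi_\alpha$ directly inside $\mathcal{S}(\lambda)$, expand (\ref{LNSS}) in powers of $\sqrt\nu\,t$, and solve the resulting finite triangular family of Orr--Sommerfeld problems using Lemma~\ref{propdecomposition}; the only genuinely delicate point is the resonance between the forcing rate $\nu^{1/2}\lambda$ and the eigenvalue of $L_{NS}^\omega$. Concretely, write $g$ under the form of the definition of $\mathcal{S}(\lambda)$, $g(t,y)=\sum_{k=0}^{K}(\sqrt\nu\,t)^k g_k(y)\,e^{\nu^{1/2}\lambda t}$ with only finitely many $g_k$ non zero, and look for $\psi_\alpha(t,y)=\sum_{k\ge 0}(\sqrt\nu\,t)^k\psi_k(y)\,e^{\nu^{1/2}\lambda t}$, with vorticity $\omega_\alpha=-\Delta_\alpha\psi_\alpha=\sum_{k\ge 0}(\sqrt\nu\,t)^k\omega_k\,e^{\nu^{1/2}\lambda t}$, $\omega_k:=-\Delta_\alpha\psi_k$. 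Inserting this into (\ref{LNSS}) and using $\partial_t(\sqrt\nu\,t)^k=k\sqrt\nu\,(\sqrt\nu\,t)^{k-1}$, the coefficient of $(\sqrt\nu\,t)^k e^{\nu^{1/2}\lambda t}$ gives, for every $k\ge 0$,
\beq\label{hierarchy}
\bigl(L_{NS}^\omega+\nu^{1/2}\lambda\bigr)\,\omega_k \;=\; \Delta_\alpha g_k-(k+1)\sqrt\nu\,\omega_{k+1},
\eeq
equivalently, up to the factor $i\alpha^{-1}\nu^{-1/4}$ recalled in Section~\ref{defiGreen}, an Orr--Sommerfeld problem $Orr_{\alpha,c,\nu}(\psi_k)=\tfrac{i}{\alpha\nu^{1/4}}\bigl(\Delta_\alpha g_k-(k+1)\sqrt\nu\,\omega_{k+1}\bigr)$ with $\nu^{1/2}\lambda=-i\alpha c$. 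Since $g_k=0$ for $k>K$, I set $\omega_k=0$ for $k\ge K+2$ and solve (\ref{hierarchy}) from the top index downwards, determining successively $\omega_{K+1},\omega_K,\dots,\omega_0$; each $\psi_k$ is the stream function produced by Lemma~\ref{propdecomposition}, hence satisfies the Dirichlet conditions at $y=0$, so $\psi_\alpha$ does too.

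The core step is solving one equation of (\ref{hierarchy}). If $\nu^{1/2}\lambda$ stays away (at the scale $\nu^{1/4}$ of the $c$ variable) from the eigenvalue of $-L_{NS}^\omega$, then $L_{NS}^\omega+\nu^{1/2}\lambda$ is invertible and (\ref{hierarchy}) is solved by the Orr--Sommerfeld bounds of Lemma~\ref{away} (or its near-eigenvalue variant), with the \emph{better} factor $\nu^{-1/4}$, so that the claimed bound holds a fortiori. Hence the only case to treat is the resonant one $\lambda=\lambda(\alpha,\nu)$, where $L_{NS}^\omega+\nu^{1/2}\lambda$ has the one-dimensional kernel $\CC\,\omega_\alpha^{unstable}$ and its range is the hyperplane orthogonal to the adjoint eigenmode $\psi_3$ in the velocity pairing (\ref{ortho}) (Lemma~\ref{propdecomposition} and the discussion around (\ref{charrange})). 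The general solution of (\ref{hierarchy}) then reads $\omega_k=B_3\bigl(\Delta_\alpha g_k-(k+1)\sqrt\nu\,\omega_{k+1}\bigr)+c_k\,\omega_\alpha^{unstable}$ with a free scalar $c_k$, \emph{provided} the right-hand side lies in that range, and this solvability condition is precisely what fixes the free constant produced one step earlier. Thus $\omega_{K+1}$ is forced into the kernel, $\omega_{K+1}=c_{K+1}\,\omega_\alpha^{unstable}$, and imposing $\Delta_\alpha g_K-(K+1)\sqrt\nu\,c_{K+1}\,\omega_\alpha^{unstable}\perp\psi_3$ gives
\beq\label{ckchoice}
c_{K+1}=\frac{\langle \Delta_\alpha g_K,\psi_3\rangle}{(K+1)\sqrt\nu\,\langle \omega_\alpha^{unstable},\psi_3\rangle};
\eeq
likewise, once $\omega_k$ is known up to its kernel part, $c_k$ is chosen so as to make the $(k-1)$-st equation solvable. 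Here $\langle \omega_\alpha^{unstable},\psi_3\rangle$ is the pairing $(v_{NS},v_{NS}^t)$, which does not vanish and is $O(1)$ by (\ref{ortho}) and \cite{Bian1}, while $|\langle \Delta_\alpha g_k,\psi_3\rangle|\lesssim\|g_k\|_{Y^n}$ since $\Delta_\alpha g_k\in X^{n-2}_\omega$ and $\psi_3$ obeys (\ref{boundpsit}). This is the mechanism already visible on the caricature (\ref{caricature}) with $\eps\sim\sqrt\nu$: the resonance forces a division by $\sqrt\nu$ that the extra polynomial factor $(\sqrt\nu\,t)^{k}$ absorbs.

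It then remains to collect the estimates and sum. The division by $\sqrt\nu$ in (\ref{ckchoice}) yields $|c_{K+1}|\lesssim\nu^{-1/2}\|g\|_n$, hence $\|\psi_{K+1}\|_{Y^n}=|c_{K+1}|\,\|\psi_\alpha^{unstable}\|_{Y^n}\lesssim\nu^{-1/2}\|g\|_n$, since $\psi_\alpha^{unstable}\in Y^n$ with uniformly bounded norm (see (\ref{formunstable3})). For the lower indices, $\Delta_\alpha g_k-(k+1)\sqrt\nu\,\omega_{k+1}$ is $O(\|g\|_n)$ in $X^{n-2}_\omega$ (the correction being $\sqrt\nu\cdot\nu^{-1/2}\|g\|_n$ by the inductive bound), the pseudo-inverse $B_3$ of Lemma~\ref{propdecomposition} costs $\nu^{-1/4}$ and the passage from $L_{NS}^\omega$ to $Orr_{\alpha,c,\nu}$ the further $i\alpha^{-1}\nu^{-1/4}$, while the kernel coefficient $c_k$ fixed by the next solvability condition is again $O(\nu^{-1/2}\|g\|_n)$; altogether $\|\psi_k\|_{Y^n}\lesssim\nu^{-1/2}\|g\|_n$ for each $k$. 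As the $g_k$ vanish for $k>K$ and a pure kernel element solves (\ref{hierarchy}) with zero right-hand side, only $\psi_0,\dots,\psi_{K+1}$ are non zero, so the series for $\psi_\alpha$ is finite, $\psi_\alpha\in\mathcal{S}(\lambda)$, and $\|\psi_\alpha\|_n=\sum_k\|\psi_k\|_{Y^n}\lesssim\nu^{-1/2}\|g\|_n$. I expect the main obstacle to be exactly this resonant step: at the eigenvalue $L_{NS}^\omega+\nu^{1/2}\lambda$ is not invertible, so Lemma~\ref{away} does not apply, and the resonance has to be resolved by combining the finite Taylor expansion in $\sqrt\nu\,t$ with the Fredholm solvability condition (\ref{charrange}) and the pseudo-inverse $B_3$; it is precisely this that produces the sharp loss $\nu^{-1/2}$, the $\eps^{-1}$ of (\ref{caricature}) with $\eps\sim\sqrt\nu$. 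The remaining work — the conversions between the vorticity norms $X^n_\omega$ and the stream-function norms $Y^n$ and the bookkeeping of the $\nu^{1/4}$ powers — is routine, of the same nature as in the proof of Lemma~\ref{away}.
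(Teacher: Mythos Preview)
Your recursive scheme in powers of $\sqrt\nu\,t$ is exactly the paper's approach, but you have misread the scope of the lemma: it sits in the subsection ``Inverse when $|c-c(\alpha,\nu)|\gtrsim\nu^{1/4}$'', so the standing hypothesis is precisely that $\lambda$ is \emph{away} from the eigenvalue. The paper's proof is therefore nothing more than your non-resonant branch: starting from the top index $Q$, one puts $\psi_{\alpha,Q}=-Orr_{\alpha,c,\nu}^{-1}\bigl[i\alpha^{-1}\nu^{-1/4}\Delta_\alpha g_Q\bigr]$ and then, for $k<Q$, $\psi_{\alpha,k}=-Orr_{\alpha,c,\nu}^{-1}\bigl[\sqrt\nu\,(k+1)\,i\alpha^{-1}\nu^{-1/4}\Delta_\alpha\psi_{\alpha,k+1}\bigr]$, invoking Lemma~\ref{away} at each step. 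Your entire resonant analysis --- the Fredholm solvability conditions, the kernel coefficients $c_k$, the shifted top index $K+1$ --- is superfluous here; it is precisely the content of the two \emph{subsequent} lemmas (Lemmas~\ref{propdecomposition2} and~\ref{propdecomposition2-2}), which treat $c$ close to $c(\alpha,\nu)$ and $c=c(\alpha,\nu)$ separately.

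One bookkeeping slip in the part that does apply: your claim that the non-resonant case yields ``the better factor $\nu^{-1/4}$'' is incorrect. Lemma~\ref{away} indeed gives $\|Orr^{-1}f\|_{Y^n}\lesssim\nu^{-1/4}\|f\|_{X^n_\omega}$, but the source term in your hierarchy already carries the conversion factor $i\alpha^{-1}\nu^{-1/4}$ from $L_{NS}^\omega$ to $Orr$ (which you yourself invoke later), so the combined loss on $\psi_{\alpha,Q}$ is $\nu^{-1/4}\cdot\nu^{-1/4}=\nu^{-1/2}$ --- exactly the stated bound, not a fortiori better. The inductive step then preserves this order because the extra $\sqrt\nu$ in front of $\omega_{k+1}$ cancels one $\nu^{-1/2}$ from the previous step before the next inversion restores it.
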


\begin{proof}
It is sufficient to prove this Lemma when $g$ is of the form
$$
g(t,y) = (\sqrt{\nu} t)^Q e^{\nu^{1/2}  \lambda t} g_Q(y).
$$
We look for $\psi_\alpha$ under the form
$$
\psi_\alpha(t,y) = \sum_{k \le Q} (\sqrt{\nu} t)^k   e^{\nu^{1/2} \lambda t} \psi_{\alpha,k}(y) ,
$$
which leads to
$$
\psi_{\alpha,Q} = - Orr_{\alpha,c,\nu}^{-1} \Bigl[i  \alpha^{-1} \nu^{-1/4} \Delta_\alpha  g_Q \Bigr],
$$
which is of norm of order $\nu^{-1/2}$, 
and, for $k < Q$, to
$$
\psi_{\alpha,k} = - Orr_{\alpha,c,\nu}^{-1}  \Bigl[
\sqrt{\nu} (k+1) i \alpha^{-1} \nu^{-1/4} \Delta_\alpha \psi_{\alpha,k+1} \Bigr],
$$ 
which are also of norm of order $\nu^{-1/2}$.
The Lemma follows by induction. 
\end{proof}

%%%%

\subsubsection{Inverse for $c$ close to $c(\alpha,\nu)$}

%%%%

We now describe the geometry of $Orr_{\alpha,c,\nu}$ when $c$ is close to $c(\alpha,\nu)$.

\begin{lemma} \label{propdecomposition2}
If $g  \in {\cal S}(\lambda)$
 then there exists a solution $\psi$ to (\ref{LNSS}) of the form $\psi = \psi_1 + \psi_2$ where
$\psi_1 \in {\cal S}(\lambda)$ and $\psi_2 \in {\cal S}(\lambda(\alpha,\nu))$.
Moreover, for any positive integer $n$,
$$
\| \psi_1 \|_n + \| \psi_2 \|_n \lesssim  \nu^{-1/2} \| g \|_{n}   . 
$$
\end{lemma}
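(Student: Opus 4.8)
The plan is to reduce to the case of a single monomial in $\sqrt\nu t$, as in the previous Lemma, and then to solve the resulting hierarchy of Orr--Sommerfeld equations, this time paying attention to the fact that $c$ is close to the eigenvalue $c(\alpha,\nu)$, so that $Orr_{\alpha,c,\nu}$ is close to being non-invertible. Since ${\cal S}(\lambda)$ is a finite sum, by linearity it suffices to treat $g(t,y) = (\sqrt\nu t)^Q e^{\nu^{1/2}\lambda t} g_Q(y)$. One looks for $\psi$ as a finite sum $\psi = \sum_{k\le Q}(\sqrt\nu t)^k e^{\nu^{1/2}\lambda t}\psi_{1,k}(y) + \sum_{k}(\sqrt\nu t)^k e^{\nu^{1/2}\lambda(\alpha,\nu) t}\psi_{2,k}(y)$; plugging into (\ref{LNSS}) and matching powers of $t$ at frequency $\lambda$, together with the factor $i\alpha^{-1}\nu^{-1/4}$ which converts $L_{NS}^\omega$ to $Orr_{\alpha,c,\nu}$ as recalled above (\ref{serieg}), one obtains, starting from $k=Q$ and descending, a system of equations of the schematic form $Orr_{\alpha,c,\nu}(\psi_{1,Q}) = i\alpha^{-1}\nu^{-1/4}\Delta_\alpha g_Q$ and, for $k<Q$, $Orr_{\alpha,c,\nu}(\psi_{1,k}) = i\alpha^{-1}\nu^{-1/4}\bigl(\sqrt\nu(k+1)\Delta_\alpha\psi_{1,k+1}\bigr)$. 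The point is that these right-hand sides need not lie in the range of $Orr_{\alpha,c,\nu}$ when $c=c(\alpha,\nu)$, and even when $c$ is merely close, inverting costs an extra factor $|c-c(\alpha,\nu)|^{-1}$ through the boundary Green function $G^b$ (cf.\ (\ref{boundsG2}), (\ref{boundintds})).

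The key device is the decomposition of the source onto the range of $Orr_{\alpha,c(\alpha,\nu),\nu}$ and its complement, i.e.\ the operators $B_1,B_2,B_3$ of Lemma \ref{propdecomposition}, or rather their Orr--Sommerfeld analogue. Concretely, I would write each right-hand side $f_k := i\alpha^{-1}\nu^{-1/4}\Delta_\alpha(\dots)$ as $f_k = \rho_k \psi_3 + \tilde f_k$ where $\psi_3$ spans the orthogonal of the range (the adjoint eigenvector, cf.\ the discussion around (\ref{charrange})) and $\tilde f_k$ lies in the range, with $|\rho_k|\lesssim\|f_k\|$ and $\|\tilde f_k\|\lesssim\|f_k\|$. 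The piece $\tilde f_k$ can be inverted by $B_3$ with the loss $\nu^{-1/4}$ of Lemma \ref{propdecomposition}, contributing to $\psi_{1,k}$; the resonant piece $\rho_k\psi_3$ is what produces the second component: by the caricature (\ref{caricature}) at $\lambda$ versus the eigenvalue $\lambda(\alpha,\nu)$, instead of dividing by the near-zero quantity one writes the solution of $\partial_t\phi - \nu^{1/2}\lambda(\alpha,\nu)\phi = (\sqrt\nu t)^k e^{\nu^{1/2}\lambda t}\psi_3$ as a combination $(\sqrt\nu t)^j e^{\nu^{1/2}\lambda t}$ minus $(\sqrt\nu t)^j e^{\nu^{1/2}\lambda(\alpha,\nu) t}$ terms, with coefficients controlled by $\nu^{-1/2}$ and powers of $(\lambda-\lambda(\alpha,\nu))/\lambda$ that are $O(1)$ since $|\lambda-\lambda(\alpha,\nu)|\lesssim\nu^{1/2}$. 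This is where the functions with time factor $e^{\nu^{1/2}\lambda(\alpha,\nu)t}$, i.e.\ the component $\psi_2\in{\cal S}(\lambda(\alpha,\nu))$, get generated; collecting them gives $\psi_2$, and summing the $B_3$-contributions plus the $e^{\nu^{1/2}\lambda t}$ part of the resonant solution gives $\psi_1\in{\cal S}(\lambda)$.

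The bookkeeping then proceeds by downward induction on $k$ from $Q$ to $0$: at each step the source has $Y^n$-norm controlled (through the $\Delta_\alpha$ and the $\alpha^{-1}\nu^{-1/4}$ factor, which combine harmlessly since $\alpha$ is of order $1$ after rescaling) by $\nu^{1/2}$ times the previous $\psi_{1,k+1}$, while inversion costs $\nu^{-1/4}\cdot\nu^{-1/4}=\nu^{-1/2}$ — one $\nu^{-1/4}$ from $B_3$ and one from the $i\alpha^{-1}\nu^{-1/4}$ prefactor — so that $\|\psi_{1,k}\|_n\lesssim\nu^{-1/2}\|g\|_n$ uniformly, and similarly for the $\psi_{2,k}$; the extra $\sqrt\nu(k+1)$ in the recursion is bounded since $k\le Q$ is fixed, so no constants blow up. Membership in ${\cal S}(\lambda)$ and ${\cal S}(\lambda(\alpha,\nu))$ is immediate from the finite-sum structure and the fact that each $\psi_{i,k}\in Y^n$ for all $n$.

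The main obstacle is the resonant part: one must verify that projecting $f_k$ off the range of the critical Orr--Sommerfeld operator and then solving the scalar ODE in the spirit of (\ref{caricature}) really does produce only the two exponential rates $\lambda$ and $\lambda(\alpha,\nu)$ with polynomial-in-$\sqrt\nu t$ coefficients of the claimed size, and that the algebra of iterating this through the hierarchy does not generate higher powers of $t$ beyond what ${\cal S}$ allows or spoil the $\nu^{-1/2}$ bound — in other words, that the near-resonance is handled by a \emph{bounded} number of polynomial corrections rather than accumulating a genuine $|c-c(\alpha,\nu)|^{-1}$ singularity. Once that is checked, the rest is the routine Green-function estimates already developed in Lemma \ref{away} and Lemma \ref{propdecomposition}.
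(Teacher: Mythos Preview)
Your overall strategy --- reduce to a monomial source, decompose each right-hand side via the projectors $B_1,B_2$ of Lemma~\ref{propdecomposition}, invert the range part by $B_3$, and handle the resonant part through the scalar caricature~(\ref{caricature}) producing the second exponential rate --- is exactly the paper's approach. However, there is a genuine gap in your treatment of the range part.

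You write that ``the piece $\tilde f_k$ can be inverted by $B_3$ with the loss $\nu^{-1/4}$ of Lemma~\ref{propdecomposition}, contributing to $\psi_{1,k}$.'' But $B_3$ is the pseudo-inverse of $L_{NS}+\lambda(\alpha,\nu)$, not of $L_{NS}+\lambda$. Applying $B_3$ to $\tilde f_k$ and placing the result as the coefficient of $(\sqrt\nu t)^k e^{\nu^{1/2}\lambda t}$ leaves a residual
\[
\bigl(L_{NS}+\nu^{1/2}\lambda\bigr)\,B_3\tilde f_k \;=\; \tilde f_k \;+\; \bigl[\lambda-\lambda(\alpha,\nu)\bigr]\nu^{1/2}\,B_3\tilde f_k,
\]
which sits at the \emph{same} level $k$, not at $k-1$, so your downward hierarchy in $k$ does not absorb it. The paper closes this gap by first constructing an \emph{exact} resolvent ${\cal S}_{\alpha,c,\nu}$ at the nearby value $c$ via the Neumann iteration $v_{n+1}=-[\lambda-\lambda(\alpha,\nu)]\,B_3B_2\,v_n$; this converges because $|\lambda-\lambda(\alpha,\nu)|\cdot\|B_3\|=O(\nu^{1/2})\cdot O(\nu^{-1/4})=O(\nu^{1/4})<1$. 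The resulting operator splits as ${\cal S}^1_{\alpha,c,\nu}/(\lambda-\lambda(\alpha,\nu))+{\cal S}^2_{\alpha,c,\nu}$ with ${\cal S}^1$ bounded and ${\cal S}^2=O(\nu^{-1/4})$; the singular first summand is then tamed by the difference-of-exponentials expansion (your caricature step), while ${\cal S}^2$ plays the role you assign to $B_3$ but now really does invert at $c$. Without this Neumann step your scheme is only an approximate solution with an $O(\nu^{1/4})$ residual at each $k$.

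A minor point: you write $f_k=\rho_k\psi_3+\tilde f_k$ with $\psi_3$ the adjoint eigenvector. The decomposition $f=B_1f+B_2f$ of Lemma~\ref{propdecomposition} projects $f$ onto the \emph{direct} eigenvector $v_{NS}$ (with coefficient computed via pairing against the adjoint $v_{NS}^t$), not onto $\psi_3$ itself; this does not affect the argument but the notation should be adjusted.
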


\begin{proof}
The first step is to construct an inverse of $L_{NS} - \lambda$ for $\lambda$ close to $\lambda(\alpha,\nu)$.
Let $v = \nabla^\perp g$ be the velocity associated to the stream function $g$.
We decompose $v$ in $v = B_1 v + B_2 v$ and define an approximate solution ${\cal S}^{app}$
by 
$$
{\cal S}^{app} v = {B_1 v \over \lambda - \lambda(\alpha,\nu)} + B_3 B_2 v 
$$
in such a way that
$$
(L_{NS} + \lambda) \, {\cal S}^{app} v = v + [ \lambda - \lambda(\alpha,\nu) ] B_3 B_2 v .
$$
We then construct an exact inverse ${\cal S}_{\alpha,c,\nu}$ of $Orr_{\alpha,c,\nu}$ using the iterative scheme
$$
v_{n+1} =  - [ \lambda - \lambda(\alpha,\nu) ]  \, B_3 \, B_2 \, v_n
$$
starting from $v_0 = v$.
We then set
$$
{\cal S}_{\alpha,c,\nu} v =  {  {\cal S}_{\alpha,c,\nu} ^1 v \over \lambda - \lambda(\alpha,\nu)} 
+   {\cal S}_{\alpha,c,\nu}^2 v
$$
where
$$
{\cal S}^1_{\alpha,c,\nu} v = \sum_{n \ge 0}   B_1 v_n 
$$
and
$$
{\cal S}^2_{\alpha,c,\nu} v = \sum_{n \ge 0}  B_3 B_2 v_n.
$$
Note that these series converge since $B_3$ is bounded by $\nu^{-1/4}$ and $| \lambda - \lambda(\alpha,\nu)|$
is of order at most $\nu^{1/2}$.
We observe that
$$
(L_{NS} + \lambda) \,  {\cal S}_{\alpha,c,\nu} v =   v .
$$
Moreover, ${\cal S}^1_{\alpha,c,\nu}$ is a bounded operator, whereas the norm of ${\cal S}^2_{\alpha,c,\nu}$ is of order
$\nu^{-1/4}$.
Let us now solve 
$$
\partial_t w + L_{NS} w = 
(\sqrt{\nu} t)^N e^{-i \nu^{1/4} \alpha c t}  v.
$$
We look for solutions of the form
\beq \label{complex}
w = \sum_{0 \le k \le N} (\sqrt{\nu} t)^k e^{- i \nu^{1/4} \alpha c t} w_k^1
+  \sum_{0 \le k \le N} (\sqrt{\nu} t)^k e^{- i \nu^{1/4} \alpha c(\alpha,\nu) t} w_k^2.
\eeq
For $k = N$, we choose
\beq \label{omegaQ}
w_N(t,y) = ( {\cal S}^1_{\alpha,c,\nu}  w )
{e^{- i \nu^{1/4}  \alpha c t} -   e^{- i \nu^{1/4} \alpha c(\alpha,\nu) t} \over - i  \alpha \nu^{1/4} (c(\alpha,\nu) - c)} 
+( {\cal S}^2_{\alpha,c,\nu} w)  e^{- i  \nu^{1/2} \alpha c t}.
\eeq
Note that $\alpha \nu^{1/4}  (c - c(\alpha,\nu)) t$ is at most of order $\nu^{1/2} \nu^{\beta} \nu^{-1/2} \log \nu^{-1} \ll 1$.
Thus we may expand 
$$
{ e^{- i \nu^{1/4} \alpha c t} - e^{ - i \nu^{1/4} \alpha c(\alpha,\nu) t} \over  \alpha \nu^{1/4} (c(\alpha,\nu) - c)} 
= e^{ - i \nu^{1/4} \alpha c(\alpha,\nu) t} {e^{- i \nu^{1/4} \alpha (c - c(\alpha,\nu)) t} - 1 \over \alpha \nu^{1/4} ( c(\alpha,\nu) - c)} 
$$
$$
= - i {(\sqrt{\nu} t)  \over \nu^{1/2}} e^{ - i \nu^{1/4} \alpha c(\alpha,\nu) t}
+ {1 \over \sqrt{\nu}}\sum_{n \ge 1} {(-i)^n \over n!}  \alpha^{n-1} \nu^{(n-1)/4} (c - c(\alpha,\nu))^{n-1}
 \nu^{1/2} t^n e^{ - i \nu^{1/4} \alpha c(\alpha,\nu) t}.
$$
The series is convergent and can be put in the second term of the right hand side of (\ref{complex}).
The second term of the right hand side of (\ref{omegaQ}) can be put in the first term of the right hand side of (\ref{complex}).
We then define $w_k^1$ and $w_k^2$ for $k < N$ by induction.
\end{proof}

%%%%%%%%%%%

\subsection{Inverse for $c = c(\alpha,\nu)$}

%%%%%%%%%%%

\begin{lemma} \label{propdecomposition2-2}
If $g \in {\cal S}(\lambda(\alpha,\nu))$ 
 then there exists a solution $\psi$ to (\ref{LNSS}) 
 such that $\psi \in {\cal S}(\lambda(\alpha,\nu))$.
Moreover, for any positive integer $n$,
$$
\| \psi \|_{n}  \lesssim \nu^{-1/2} \| g \|_{n} .
$$
\end{lemma}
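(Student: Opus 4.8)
The plan is to mimic the proof of Lemma \ref{propdecomposition2}, specialised to the resonant case $\lambda = \lambda(\alpha,\nu)$, in which the factor $(\lambda - \lambda(\alpha,\nu))^{-1}$ must be replaced by a secular growth term, exactly as in the caricature $\partial_t \phi - \eps \phi = e^{\eps t}$ whose solution $\phi(t) = \eps^{-1}(\eps t) e^{\eps t}$ gains one power of $t$ while staying of size $O(\eps^{-1})$. First I would use linearity to reduce to $g(t,y) = (\sqrt{\nu} t)^Q e^{\nu^{1/2}\lambda t} g_Q(y)$ with $\lambda = \lambda(\alpha,\nu)$, and, exactly as in the proof of Lemma \ref{propdecomposition2}, rewrite (\ref{LNSS}) at the level of velocities as $\partial_t w + L_{NS} w = (\sqrt{\nu} t)^Q e^{\nu^{1/2}\lambda t} v_Q$, where $v_Q = \nabla^\perp g_Q$ and $w = \nabla^\perp \psi$. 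I would then look for $w$ under the form
\beq \label{ansatzresonant}
w(t,y) = \sum_{k = 0}^{Q+1} (\sqrt{\nu} t)^k e^{\nu^{1/2}\lambda t} w_k(y),
\eeq
i.e.\ with polynomial degree one higher than that of $g$, mirroring the gain in the caricature.

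Inserting (\ref{ansatzresonant}) and matching the powers of $(\sqrt{\nu} t)$ gives the triangular system $(L_{NS} + \lambda(\alpha,\nu))\, w_k = v_Q\,\delta_{k,Q} - (k+1)\sqrt{\nu}\, w_{k+1}$ for $0 \le k \le Q+1$ (with $w_{Q+2} = 0$), which I would solve from the top down using the decomposition $v = B_1 v + B_2 v$ and the pseudo-inverse $B_3$ of Lemma \ref{propdecomposition}, after replacing $B_3$ by $B_2 B_3$ so that $B_3$ takes values in the range of $L_{NS} + \lambda(\alpha,\nu)$ (this does not affect $(L_{NS}+\lambda(\alpha,\nu)) B_3 = \Id$). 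At level $k = Q+1$ the equation is homogeneous, so $w_{Q+1} = a_{Q+1} v_{NS}$ for a scalar $a_{Q+1}$; the solvability (range) condition at level $k = Q$, namely $B_1\big(v_Q - (Q+1)\sqrt{\nu}\, w_{Q+1}\big) = 0$, fixes $a_{Q+1} = \phi(v_Q)/((Q+1)\sqrt{\nu})$, and since $|\phi(v_Q)| \lesssim \|v_Q\|_{X^{n,0}}$ this is $O(\nu^{-1/2}\|v_Q\|_{X^{n,0}})$ — this single division by $\sqrt{\nu}$ is the sole source of the claimed $\nu^{-1/2}$ loss. Then $w_Q = B_3 B_2 v_Q$ (of size $O(\nu^{-1/4}\|v_Q\|)$ by the bound on $B_3$), and inductively $w_k = -(k+1)\sqrt{\nu}\, B_3 B_2 w_{k+1}$ for $k < Q$, which is of size $\lesssim \nu^{(Q-k-1)/4}\|v_Q\|$, hence $\lesssim \nu^{-1/2}\|v_Q\|$; no new obstruction arises at the lower levels since $B_1 w_k = 0$ for $k \le Q$ by construction and $B_2 v_{NS} = 0$.

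Summing these bounds and translating back from velocities to stream functions via the elementary estimates relating the $X^{n,0}$ norm of $\nabla^\perp \psi$ to $\|\psi\|_{Y^n}$ then yields $\|\psi\|_n \lesssim \nu^{-1/2}\|g\|_n$, with $\psi \in {\cal S}(\lambda(\alpha,\nu))$ since the construction produces a polynomial of finite degree $Q+1$ in $\sqrt{\nu}\,t$. I expect the main (and essentially only) delicate point to be the bookkeeping that exactly one factor $\nu^{-1/2}$ is lost: this relies on the kernel of $L_{NS} + \lambda(\alpha,\nu)$ being simple, so that the solvability obstruction at each level is a single scalar killed by the one free parameter carried by $w_{k+1}$, together with the fact that all residual terms $B_3 B_2(\cdots)$ remain $O(\nu^{-1/4})$. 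In contrast with Lemma \ref{propdecomposition2}, no denominator $c - c(\alpha,\nu)$ is present here, so the delicate expansion of $\big(e^{-i\nu^{1/4}\alpha c t} - e^{-i\nu^{1/4}\alpha c(\alpha,\nu) t}\big)/(c(\alpha,\nu)-c)$ is not needed and the finiteness of the polynomial degree is automatic.
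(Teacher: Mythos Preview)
Your proposal is correct and follows essentially the same approach as the paper: reduce by linearity to a single monomial $(\sqrt\nu\,t)^Q e^{\nu^{1/2}\lambda(\alpha,\nu)t}$, make the ansatz of a polynomial in $\sqrt\nu\,t$ of degree $Q+1$, and determine the top coefficient $w_{Q+1} = \nu^{-1/2}(Q+1)^{-1}B_1 v_Q$ from the solvability condition, the remaining $w_k$ by induction via $B_3 B_2$. The paper states exactly this (with the same formula for $w_{Q+1}$) but leaves the induction and the norm bookkeeping implicit, whereas you have spelled out the triangular system and the harmless replacement $B_3 \to B_2 B_3$ that makes the lower-level obstructions vanish automatically.
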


\begin{proof}
For $c = c(\alpha,\nu)$, by definition, $Orr_{\alpha,c(\alpha,\nu),\nu}$ is not invertible. If we want to solve
$$
\partial_t v + L_{NS} \, v = e^{i \alpha (x - c(\alpha,\nu) t) } w(y),
$$
we first split $w$ into $B_1 w + B_2 w$, and observe that the solution is explicitly given by
$$
v = \nu^{-1/2}  (\sqrt{\nu}t) e^{i \alpha (x - c(\alpha,\nu) t)}   B_1 w
+   e^{i \alpha (x - c(\alpha,\nu) t)}  B_3 B_2 w  .
$$
More generally, if $N$ is an integer, then the solution to
\beq \label{poly}
\partial_t v + L_{NS} \, v =   (\sqrt{\nu}t)^N  e^{i \alpha (x - c(\alpha,\nu) t) } w(y)
\eeq
is of the form
$$
v = \sum_{0 \le k \le N+1} (\sqrt{\nu} t)^k e^{i \alpha (x - c(\alpha,\nu) t) } w_k(y),
$$
with 
$$
w_{N+1} = \nu^{-1/2} {B_1 w \over N+1},
$$
the other $w_k$ being obtained by induction.
The end of the proof is then straightforward, using the bounds on $B_1$, $B_2$ and $B_3$.
\end{proof}

%%%%%%%%%%%%%%%%%%%%%%%%%%%%%%%%%%%%%

\section{"Slow" instabilities  \label{general}}

%%%%%%%%%%%%%%%%%%%%%%%%%%%%%%%%%%%%%

The aim of this section is to construct instabilities of large scale flows. More precisely,
let $u^\eps(t,x,y)$ be a sequence of solutions of Navier Stokes equations which slowly depend on time,
 have "large structures" in the $x$ and $y$ variables, and a "boundary layer behavior" near $y = 0$,
 namely let $u^\eps(t,x,y)$ be a sequence of solutions of the form
\beq \label{ueps}
u^\eps(t,x,y) = 
\left( \begin{array}{c} U_1^{int,\eps}(\eps t,\eps x,\eps y) + U_1^{bl,\eps}(\eps t, \eps x, y) \cr
U_2^{int,\eps}(\eps t,\eps x,\eps y) + \eps U_2^{bl,\eps}(\eps t, \eps x, y)
\end{array} \right)
\eeq
where $\eps$, which depends on $\nu$, goes to $0$ as $\nu \to 0$. 
We assume that $U_1^{int,\eps}$ and $U_2^{int,\eps}$ are smooth in their three variables
$T = \eps t$, $X = \eps x$ and $Y = \eps y$
and converge to some functions $U_1^{int,0}$ and $U_2^{int,0}$ in $C^\infty$ as $\eps \to 0$.

 We moreover assume that $U_1^{bl,\eps}$ and $U_2^{bl,\eps}$ are smooth in their three variables $T$, $X$ and $y$, 
 are exponentially decaying in $y$ (as well as all their derivatives),  and
 converge in $C^\infty$ (with uniform exponential decay) to some functions $U_1^{b,0}$ and $U_2^{b,0}$.

We define
$$
U_1^\eps(t,x,y) = U_1^{int,\eps}(\eps t,\eps x,\eps y) + U_1^{bl,\eps}(\eps t, \eps x, y)
$$
and
$$
U_2^\eps(t,x,y) = U_2^{int,\eps}(\eps t,\eps x,\eps y) + \eps U_2^{bl,\eps}(\eps t, \eps x, y),
$$
and similarly for $U_1^0$ and $U_2^0$.

This  study may be extended to the case where $u^\eps(t,x,y)$ depends on multiple scales $\eps_1 \ll ... \ll \eps_n \ll 1$,
where $\eps_n \to 0$ as $\nu \to 0$.
Note the $\eps$ factor in front of $U_2^{bl,\eps}$, which comes from the incompressibility condition.
Moreover, Prandtl's Ansatz is of the form (\ref{ueps}), up to a rescaling of $t$, $x$ and $y$ by a factor $\nu^{1/2}$.

\medskip

We will distinguish two kinds of instabilities: "slow" instabilities, which occur over times of order $O(\nu^{-1/2})$ (corresponding
to the second kind of instabilities discussed in the introduction), and "fast" instabilities, which occur over times of order $O(1)$ 
(corresponding to the first kind of instabilities discussed in the introduction).

We will prove that, if $(U_1^{bl,0}(0,0,0,y),0)$ is an unstable shear layer, then $u^\eps$ is also unstable, 
provided $\eps(\nu) \lesssim \nu$ in the first
case and $\eps(\nu) \ll \nu^\delta$ for some arbitrarily small and positive $\delta$ in the second one.

In this section we focus on "slow" instabilities and prove the following general instability result.
We recall that $\alpha$ has been rescaled by a factor $\nu^{1/4}$.

\begin{theo} \label{theogeneral}
Let us assume that $\eps(\nu) \lesssim \nu^\delta$ for some $\delta > 3/4$.
Let us assume that 
$$
\partial_y U_1^{bl,0}(0,0,0,0) \ne 0.
$$ 
Let $\theta > 0$ be arbitrarily small.
Then,  
for any arbitrarily large $N$ and any arbitrarily large $s$, and for $\nu$ small enough,
there exists a solution $v^\nu$ of Navier Stokes equations with forcing term $f^\nu$,
 a time $T^\nu$ and a constant $\sigma > 0$, such that
 \beq \label{theogeneral1-3}
\| v^\nu(0,\cdot,\cdot) - u^{\eps(\nu)}(0,\cdot,\cdot) \|_{H^s} \le \nu^N,
\eeq
\beq \label{theogeneral2-3}
\| f^\nu \|_{L^\infty([0,T^\nu],H^s)} \le \nu^N
\eeq
and
\beq \label{theogeneral3-3}
\| v^\nu(T^\nu,\cdot,\cdot) - u^{\eps(\nu)}(T^\nu,\cdot,\cdot) \|_{L^\infty} \ge \sigma \nu^{1/4 + \theta} ,
\eeq
where 
$$
T^\nu \sim C_0 \nu^{-1/2} \log \nu^{-1}
$$
for some constant $C_0$.
\end{theo}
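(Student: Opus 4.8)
The plan is to build $v^\nu$ as a high-order approximate solution of Navier--Stokes obtained by superimposing on $u^{\eps(\nu)}$ the linear Tollmien--Schlichting wave packet of Section~\ref{linear}, together with a convergent cascade of nonlinear and base-flow correctors, and then to take for $f^\nu$ the resulting small residual. Write $\eps=\eps(\nu)$ and $\lambda_0=\Re\lambda(\alpha_0)>0$ for the rescaled maximal growth rate. \emph{Freezing the base flow.} Set $U_s(y):=U_1^{bl,0}(0,0,0,y)+U_1^{int,0}(0,0,0)$. By the Dirichlet condition $U_s(0)=0$, and by hypothesis $U_s'(0)\ne 0$, so $(U_s(y),0)$ is a shear layer which, by \cite{Bian4,Zhang}, is linearly unstable in the long-wave regime $\alpha\sim\alpha_0\nu^{1/4}$, with unstable mode $\psi_\alpha^{unstable}$, eigenvalue $c(\alpha,\nu)=O(\nu^{1/4})$ and growth rate $\nu^{1/2}\lambda_0$ recalled in Section~\ref{linear}. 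We localise the instability near $x=0$, $t=0$, on an $x$-window of size $\nu^{-\beta-1/4}$ and on $[0,T^\nu]$ with $T^\nu\sim C_0\nu^{-1/2}\log\nu^{-1}$. There, near $y=0$, the difference $u^{\eps}-(U_s(y),0)$ has size $O(\eps(t+|x|+y))$; since $\eps\lesssim\nu^\delta$ with $\delta>3/4$, both $\eps T^\nu$ and $\eps\nu^{-\beta-1/4}$ (for $\beta$ small) are $\ll\nu^{1/4}$, i.e. negligible at the scale of the critical layer, so that the perturbation evolves, to leading order, under $\partial_t+L_{NS}[U_s]$ --- for which the resolvent bounds of Section~\ref{defiGreen} apply --- and $L_{NS}[u^{\eps}]-L_{NS}[U_s]$ is a perturbation carrying a factor $o(1)$ on $[0,T^\nu]$.

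\emph{The corrector cascade.} We seek $v^\nu=u^{\eps}+w$ with $w=\sum_{r=0}^{M}w^{(r)}$, where $w^{(0)}=\delta_0(\Psi_{lin}+\overline{\Psi_{lin}})$ is the real wave packet (\ref{local}), solving $\partial_t w^{(0)}+L_{NS}[U_s]w^{(0)}=0$, and $w^{(r+1)}$ is the solution, vanishing at $t=0$, of
$$
\partial_t w^{(r+1)}+L_{NS}[U_s]\,w^{(r+1)}=-\mathcal R^{(r)},
$$
where $\mathcal R^{(r)}$ is the residual of $w^{(0)}+\dots+w^{(r)}$ in Navier--Stokes linearised about $u^{\eps}$; by construction $\mathcal R^{(r)}$ contains only the bilinear self-interactions of the $w^{(\ell)}$, $\ell\le r$, and the base-flow defect $(L_{NS}[u^{\eps}]-L_{NS}[U_s])(\sum_{\ell\le r}w^{(\ell)})$. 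On each generated horizontal frequency $m\alpha_0\nu^{1/4}$ the forcing is a finite sum of profiles of the form $(\sqrt\nu\,t)^q g_q(y)e^{\nu^{1/2}\Lambda t}$, i.e. an element of a space $\mathcal S(\Lambda)$, and we invert $\partial_t+L_{NS}[U_s]$ on it using Lemma~\ref{away} and the lemma following it away from the eigenvalue, Lemma~\ref{propdecomposition2} for the near-resonant frequency $m=\pm1$, Lemma~\ref{propdecomposition2-2} at the eigenvalue, and the heat semigroup for the mean-flow frequency $m=0$. The crucial balance is that a bilinear interaction gains the factor $\nu^{1/4}|\alpha|=O(\nu^{1/2})$ of (\ref{u5}), which exactly cancels the $O(\nu^{-1/2})$ loss of the Orr--Sommerfeld inverse, leaving a net factor $\delta_0 e^{\nu^{1/2}\lambda_0 t}\ll1$, while a base-flow defect leaves a net factor which, thanks to $\delta>3/4$, is a fixed positive power of $\nu$ on $[0,T^\nu]$. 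Hence, for $t\le T^\nu$ and all $n$, $\|w^{(r)}(t)\|_{Y^n}\lesssim\kappa(t)^r\,\delta_0 e^{\nu^{1/2}\lambda_0 t}$ with $\kappa(t)\ll1$ (the powers of $\sqrt\nu\,t$ from near-resonances cost only poly-logarithms, absorbed into the implicit constant); the series converges with $w^{(0)}$ dominant, and $\|\mathcal R^{(M)}(t)\|_{Y^n}\lesssim\kappa(t)^M\delta_0 e^{\nu^{1/2}\lambda_0 t}$, hence $\|\mathcal R^{(M)}(t)\|_{H^s}\le\nu^{-C(s)}\kappa(t)^M\delta_0 e^{\nu^{1/2}\lambda_0 t}$ (passing from the weighted bounds to $H^s$ costs only the fixed power $\nu^{-C(s)}$ coming from the $\nu^{1/4}$-scale). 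Since everything is built from stream functions satisfying the boundary conditions, $v^\nu$ is divergence free and vanishes at $y=0$, and we define $f^\nu$ (with the pressure $q^\nu$) so that $v^\nu$ solves Navier--Stokes with forcing $f^\nu=\mathcal R^{(M)}$.

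\emph{Parameters and conclusion.} Run the construction with $\theta/2$ in place of $\theta$; fix $\delta_0:=\nu^K$ with $K=K(N,s)$ large enough that $\|\delta_0 w(0)\|_{H^s}\le\nu^N$ (by (\ref{Psiter}) at $t=0$, $\|w^{(0)}(0)\|_{H^s}$ blows up at most like a fixed power of $\nu$, and the other correctors are smaller), and define $T^\nu$ by $\delta_0 e^{\nu^{1/2}\lambda_0 T^\nu}=\nu^{1/4+\theta/2}$, so $T^\nu=C_0\nu^{-1/2}\log\nu^{-1}$ with $C_0=(K-1/4-\theta/2)/\lambda_0>0$; thus (\ref{theogeneral1-3}) holds. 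Since $\kappa(t)\le C\nu^\rho$ on $[0,T^\nu]$ for a fixed $\rho>0$ (here $\delta>3/4$ is used), choosing $M=M(N,s,\theta)$ large enough gives $\|\mathcal R^{(M)}\|_{L^\infty([0,T^\nu],H^s)}\le\nu^N$, i.e. (\ref{theogeneral2-3}). At $t=T^\nu$, by (\ref{maxPsilin}) the leading term satisfies $\|\delta_0 w^{(0)}(T^\nu)\|_{L^\infty}\gtrsim\nu^{1/4+\theta/2}/\langle\sqrt{\log\nu^{-1}}\rangle$, while $\sum_{r\ge1}\|w^{(r)}(T^\nu)\|_{L^\infty}\lesssim\nu^\rho\,\nu^{1/4+\theta/2}$ is negligible (the $Y^n$-norm controls $L^\infty$), so for $\nu$ small $\|v^\nu(T^\nu)-u^{\eps}(T^\nu)\|_{L^\infty}\ge\sigma\nu^{1/4+\theta}$, which is (\ref{theogeneral3-3}). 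Note that $\nu^{1/4+\theta}$ lies below the saturation amplitude $\nu^{1/4}$ at which the cubic self-interaction of the unstable mode would balance its linear growth, so the expansion stays perturbative up to $T^\nu$.

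\emph{Main difficulty.} The heart of the argument is the corrector cascade: one must verify that it closes in the weighted spaces $Y^n$ and $\mathcal S(\Lambda)$ --- that the $O(\nu^{1/2})$ gain of $\mathcal Q$ compensates the $O(\nu^{-1/2})$ loss of the Orr--Sommerfeld inverse at every step and on every harmonic generated, that the near-resonant harmonics $m=\pm1$ are absorbed by the $(\sqrt\nu\,t)$-grading of $\mathcal S(\Lambda)$ --- while simultaneously ensuring that the auxiliary correctors produced by $L_{NS}[u^{\eps}]-L_{NS}[U_s]$ form a convergent series, which is exactly the point at which the hypothesis $\delta>3/4$ enters.
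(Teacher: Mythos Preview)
Your overall architecture---freeze the base flow at $U_s(y)=U_1^{\eps}(0,0,0,y)$, launch the wave packet $\Psi_{lin}$, build a cascade of correctors by repeatedly inverting $\partial_t+L_{NS}[U_s]$ on the bilinear and base-flow-defect sources using Lemmas~\ref{away}, \ref{propdecomposition2}, \ref{propdecomposition2-2}, and absorb the residual into $f^\nu$---is exactly the paper's strategy (see Sections~3.1--3.3). The treatment of the base-flow defect and the role of $\delta>3/4$ are also correct.

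There is, however, a miscount in your ``crucial balance''. You write that a bilinear interaction gains $\nu^{1/4}|\alpha|=O(\nu^{1/2})$, which would exactly cancel the $O(\nu^{-1/2})$ loss of the Orr--Sommerfeld inverse. But after the rescaling (\ref{defitildealpha}) the wavenumber $\alpha$ appearing in (\ref{u5}) is $O(1)$, so the gain from $\mathcal Q$ is only $O(\nu^{1/4})$; combined with the $O(\nu^{-1/2})$ loss this leaves a net \emph{loss} of $\nu^{-1/4}$ per bilinear step. The paper states this explicitly in the ``Conclusion'' paragraph of Section~3.3: ``Note the loss of a factor $\nu^{1/4}$ at each step.'' Equivalently, if you work with the unrescaled wavenumber, then the $\nu^{-1/2}$ loss you quote from the lemmas already contains the factor $(\alpha\nu^{1/4})^{-1}$ coming from passing between $L_{NS}^\omega$ and $Orr$, and you are double-counting the $\alpha$.

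This miscount is not fatal to your argument, because you stop the construction at amplitude $\nu^{1/4+\theta/2}$: with the correct accounting the contraction factor at $T^\nu$ is $\nu^{-1/4}\cdot\nu^{1/4+\theta/2}=\nu^{\theta/2}$, still a positive power of $\nu$, so the cascade still converges and (\ref{theogeneral3-3}) follows as you claim. But the reasoning you give is wrong, and the error is conceptually the heart of the matter: the $\nu^{-1/4}$ imbalance is precisely \emph{why} the expansion saturates at $\nu^{1/4}$ and why the theorem only asserts (\ref{theogeneral3-3}) with the exponent $1/4+\theta$. Your own closing remark about saturation at $\nu^{1/4}$ is inconsistent with the ``exact cancellation'' you assert two paragraphs earlier---if the balance were perfect there would be no such saturation.
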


\subsubsection*{Remark}
If $u(t,x,y)$ is independent in $x$, we can look for periodic instabilities. In this case there is no need to "localise" the instability
by creating a "wave packet", and it is sufficient to work with plane waves of the form $e^{i \nu^{1/4} \alpha + \nu^{1/2} \lambda t}$.
The proof is then  simpler.

%%%%%%%%%%%%%%%

\subsection{Rewriting  Navier Stokes equations}

%%%%%%%%%%%%%%%

The first step is to rewrite the linearised Navier Stokes equations near $u$.
Let 
$$
T = \eps t, \qquad X = \eps x, \qquad Y = \eps y,
$$
and let $\Omega^\eps(T,X,Y,y)$ be the vorticity of $u^\eps$, namely
$$
\Omega^\eps(T,X,Y,y) = \Omega_1^\eps(T,X,Y,y) + \eps \Omega_2^\eps(T,X,Y,y),
$$
where
$$
\Omega_1^\eps(T,X,Y,y) = - \partial_y U_1^{bl,\eps}(T, X, y)
$$
and
$$
\Omega_2^\eps(T,X,Y,y) = - \partial_Y U_1^{int,\eps}(T,X,Y) +  \partial_X U_2^{int,\eps} (T,X,Y) 
+ \eps \partial_X U_2^{bl,\eps}(T,X,y).
$$
Then 
$$
L_{NS}^\omega [ u^\eps(t) ] \omega = - U_1^\eps \partial_x  \omega - \eps U_2^\eps \partial_y \omega  
- \eps v_1 \partial_X \Omega^\eps - v_2 \partial_y \Omega^\eps - \eps v_2 \partial_Y \Omega^\eps
+ \nu \Delta \omega , 
$$
thus
\beq \label{decompLNS}
L_{NS}^\omega [u(t) ] \, \omega = L_{NS}^0 \, \omega + L_{NS}^1  [ u^\eps(t) ] \, \omega
\eeq
where
$$
L_{NS}^0 \,  \omega = - U_1^\eps(0,0,0,y) \partial_x  \omega - v_2 \partial_y \Omega_1^\eps(0,0,0,y) + \nu \Delta \omega ,
$$
and
\begin{equation*}
\begin{split}
L_{NS}^1  [ u^\eps(t) ] \,  v =& - \Bigl[ U_1^\eps(T,X,Y,y) - U_1^\eps(0,0,0,y) \Bigr] \partial_x \omega
- v_2 \partial_y \Bigl[ \Omega_1^\eps(T,X,Y,y) - \Omega_1^\eps(0,0,0,y) \Bigr] \\&
-  \eps U_2^\eps \partial_y \omega 
- \eps v_1 \partial_X \Omega^\eps 
- \eps v_2 \partial_y \Omega_2^\eps
- \eps v_2 \partial_Y \Omega^\eps .
\end{split}
\end{equation*}
Note that $L_{NS}^0$ is simply the linearized Navier Stokes operator near the "frozen" profile  
$$
U_0^\eps = (U_1^\eps(0,0,0,y),0),
$$
 and that $L_{NS}^1 [ u^\eps(t) ]$ takes into account the time and space variations of $u^\eps(t,x,y)$ 
 over times and scales of size $\eps^{-1}$, together with  contributions order $O(\eps)$.

This decomposition of $L_{NS}^\omega[u^\eps(t)]$ leads to the following rewriting of Navier Stokes equations
\beq \label{rewriteNS}
\partial_t \omega + L_{NS}^0 \omega = - L_{NS}^1 [ u^\eps(t) ] \, \omega  - Q(\psi,\psi).
\eeq
We now truncate (\ref{rewriteNS}) both in time and space.
Let $\chi(u)$ be a smooth function which equals $1$ for $| u | \le 1$ and $0$ for $| u |  \ge 2$. Let $\gamma >  \beta + 1/4$.
We can choose $\beta$ arbitrarily close to $1/4$ and thus $\gamma$ arbitrarily close to $1/4$.
We truncate (\ref{rewriteNS}) in time and space and rewrite it under the form 
\beq \label{rewriteNS1}
\partial_t \omega + L_{NS}^0 \omega = - \widetilde L_{NS}^1 \omega - Q(\psi,\psi) + {\cal E} 
\eeq
where
$$
\widetilde L_{NS}^1 \omega =  \chi (\nu^{2\gamma} t) \chi( \nu^\gamma x) \chi(\nu^\gamma y)  L_{NS}^1 [ u(t)  ] \omega
$$
and
\beq \label{rewriteNS2}
{\cal E} = \chi(\nu^{2\gamma} t) \Bigl( 1 -\chi( \nu^{\gamma} x) \chi(\nu^\gamma y) \Bigr) L_{NS}^1 [u(t)] \omega .
\eeq
Note that we truncate $x$ at $\nu^{-\gamma}$, which is slightly larger than $\nu^{-1/4}$, whereas we truncate time 
at $\nu^{- 2 \gamma}$, which is slightly larger than $\nu^{-1/2}$.
We note that the solutions of (\ref{rewriteNS}) and (\ref{rewriteNS1}) are equal when $t \le \nu^{-2 \gamma}$.
The term ${\cal E}$ will later be  put into the forcing term $f^\nu$.
We first bound $\widetilde L_{NS}^1$.

\begin{lemma} \label{gain}
Let 
\beq \label{definitionV1}
V_1(t,x,y) = \chi (\nu^{2 \gamma} t) \chi( \nu^\gamma x) \chi(\nu^\gamma y) 
\Bigl[ U_1^\eps(T,X,Y,y) - U_1^\eps(0,0,0,y) \Bigr].
\eeq
Then for any arbitrarily large $N$, $V_1$ may be decomposed in
 $$
 V_1 =  V_1^1 +  V_1^2,
 $$
 where
 the Fourier Laplace transform $\widehat V_1^1$ of $V_1^1$ satisfies
\beq \label{decayhatuchi}
|  \widehat V_1^1(\tau,\alpha,y) | \lesssim  \Bigl( { \eps \over \nu^{2 \gamma }} \Bigr)  {1 \over \nu^{3 \gamma}}
(1 + | \nu^{-2\gamma} \lambda |)^{-N} (1 + | \nu^{-\gamma} \alpha |)^{-N}
\eeq
and where 
$$
|  V_1^2 (t,x,y) | \lesssim \nu^N.
$$
A similar result holds true when $U_1^\eps$ is replaced by $\Omega_1^\eps$ in (\ref{definitionV1}).
\end{lemma}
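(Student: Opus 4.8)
\textbf{Proof plan for Lemma \ref{gain}.}
The plan is to exploit the fact that $V_1$ is built from the difference $U_1^\eps(T,X,Y,y)-U_1^\eps(0,0,0,y)$, where $T=\eps t$, $X=\eps x$, $Y=\eps y$, times a product of cutoffs at scales $\nu^{-2\gamma}$ in $t$ and $\nu^{-\gamma}$ in $x$ and $y$. On the support of the truncation we have $|T|\lesssim \eps\nu^{-2\gamma}$, $|X|\lesssim \eps\nu^{-\gamma}$ and $|Y|\lesssim\eps\nu^{-\gamma}$, all of which are small since $\eps\lesssim\nu^\delta$ with $\delta>3/4>\gamma$ (taking $\gamma$ close enough to $1/4$). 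Hence a first-order Taylor expansion of $U_1^\eps$ in its slow variables $(T,X,Y)$ around $(0,0,0)$ gives $U_1^\eps(T,X,Y,y)-U_1^\eps(0,0,0,y)=T\,\partial_T U_1^\eps + X\,\partial_X U_1^\eps + Y\,\partial_Y U_1^\eps$ evaluated at intermediate points, each of these derivatives being $C^\infty$ and uniformly bounded (with exponential decay in $y$ for the boundary-layer part, and after going to the profile $U_1^{int,\eps}$ for the interior part). So $V_1$ is, up to the cutoffs, of size $O(\eps(\nu^{-2\gamma}+\nu^{-\gamma}))=O(\eps\nu^{-2\gamma})$ in $L^\infty$, and more importantly it is smooth with derivatives in $t$ costing $\eps$ and derivatives in $x$ costing $\eps$ as well.

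Next I would estimate the Fourier--Laplace transform. Write $V_1(t,x,y)=\chi(\nu^{2\gamma}t)\chi(\nu^\gamma x)\chi(\nu^\gamma y)\,W(\eps t,\eps x,\eps y,y)$ with $W$ smooth and, together with all its derivatives, $O(\eps\nu^{-2\gamma})$ on the relevant support (using the Taylor expansion). Repeated integration by parts in $t$ and in $x$ produces the decay factors: each integration by parts in $t$ gains a factor $\nu^{-2\gamma}|\lambda|^{-1}$ against the cost of one $t$-derivative, which is $O(\nu^{2\gamma})$ coming from $\partial_t\chi(\nu^{2\gamma}t)$ or $O(\eps)\ll\nu^{2\gamma}$ coming from $\partial_t W$; similarly each integration by parts in $x$ gains $\nu^{-\gamma}|\alpha|^{-1}$ against an $x$-derivative costing $O(\nu^\gamma)$ or $O(\eps)$. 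After $N$ integrations by parts in each variable one obtains the bound $(1+|\nu^{-2\gamma}\lambda|)^{-N}(1+|\nu^{-\gamma}\alpha|)^{-N}$ times the $L^1_{t,x}$-norm of the (differentiated) amplitude. That $L^1$ norm is controlled by the $L^\infty$ size $O(\eps\nu^{-2\gamma})$ times the volume of the support, which is $\nu^{-2\gamma}$ in $t$ and $\nu^{-\gamma}$ in $x$, giving an extra $\nu^{-3\gamma}$. Collecting, $|\widehat V_1^1(\tau,\alpha,y)|\lesssim (\eps/\nu^{2\gamma})\,\nu^{-3\gamma}(1+|\nu^{-2\gamma}\lambda|)^{-N}(1+|\nu^{-\gamma}\alpha|)^{-N}$, which is (\ref{decayhatuchi}) after renaming constants. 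The tiny remainder $V_1^2$, of size $O(\nu^N)$, absorbs any error from truncating the Taylor expansion at high enough order or from frequencies outside the region where the integration-by-parts estimate is clean; since $\eps\nu^{-2\gamma}\to0$ polynomially, choosing the Taylor order and $N$ large enough makes this remainder as small as desired in $L^\infty$.

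Finally, the claim that an identical statement holds with $U_1^\eps$ replaced by $\Omega_1^\eps$ is immediate: $\Omega_1^\eps(T,X,Y,y)=-\partial_y U_1^{bl,\eps}(T,X,y)$ is again smooth in $(T,X)$ with all derivatives uniformly bounded and exponentially decaying in $y$ by the standing assumptions on $U_1^{bl,\eps}$, so the same Taylor expansion and integration-by-parts argument applies verbatim.

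\textbf{Main obstacle.} The one point requiring care is the bookkeeping of which derivative is ``cheap'': derivatives falling on the cutoffs cost $\nu^{2\gamma}$ (in $t$) or $\nu^\gamma$ (in $x$), while derivatives falling on $W$ cost only $\eps\ll\nu^{2\gamma}$; one must check that in every term produced by the Leibniz rule during the $N$-fold integration by parts, the gain $\nu^{-2\gamma N}$ (resp.\ $\nu^{-\gamma N}$) from the phase always dominates the accumulated derivative cost, so that no power of $\nu^{-1}$ survives beyond the single prefactor $(\eps/\nu^{2\gamma})\nu^{-3\gamma}$ claimed. This is where the hypothesis $\delta>3/4$ (and the freedom to take $\gamma$ arbitrarily close to $1/4$) is used, and it is the only genuinely delicate bound; everything else is routine.
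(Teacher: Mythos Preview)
Your approach is correct and essentially the same as the paper's: both split $V_1$ via a high-order Taylor expansion in the slow variables $(T,X,Y)$, putting the Taylor polynomial (times the cutoffs) into $V_1^1$ and the remainder into $V_1^2=O((\eps\nu^{-2\gamma})^P)=O(\nu^N)$. The only cosmetic difference is that the paper bounds $\widehat{V_1^1}$ by treating each monomial $\chi(\nu^{2\gamma}t)\chi(\nu^\gamma x)\chi(\nu^\gamma y)\,\eps^{k+l+m}t^k x^l y^m$ separately and rescaling to a fixed Schwartz function, whereas you obtain the same decay by repeated integration by parts; these are equivalent, and your Leibniz bookkeeping (each $\partial_t$ costs at most $\nu^{2\gamma}$, each $\partial_x$ at most $\nu^{\gamma}$) gives exactly the factor $(\eps/\nu^{2\gamma})\nu^{-3\gamma}$ claimed. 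One small clarification: your opening ``first-order Taylor with intermediate points'' only serves to explain the $L^\infty$ size $O(\eps\nu^{-2\gamma})$ and is not the decomposition itself---it would be cleaner to go straight to the order-$P$ expansion, as you eventually do, and drop the mean-value formulation.
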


\begin{proof}
We expand $U_1(T,X,Y,y) - U_1(0,0,0,y)$ in Taylor series and write
$$
V_1(t,x,y) =  V_1^1(t,x,y) +  V_1^2(t,x,y)
$$
where
$$
V_1^1(t,x,y) = \chi (\nu^{2\gamma} t) \chi( \nu^\gamma x) \chi(\nu^\gamma y)  
\sum_{0 \le k,l,m \le P, \, k + l + m > 0} \eps^{k + l + m} A_{k,l,m}^\eps t^k x^l y^m 
$$
for some bounded coefficients $A_{k,l,m}^\eps$, and where
$$
|  V_1^2(t,x,y) | \lesssim  \nu^N
$$ 
provided $P$ is large enough. Terms involving $V_1^2$ will be put in the forcing term.
Now, the Fourier Laplace transform of
$$
\chi (\nu^{2\gamma} t) \chi( \nu^\gamma x) \chi(\nu^\gamma y) \eps^{k+l+m} t^k x^l y^m
$$
is in the form of
$$
\int_{\rit} \int_{\rit} \chi (\nu^{2\gamma} t) \chi( \nu^\gamma x) \chi(\nu^\gamma y) \eps^{k+l+m} t^k x^l y^m
e^{- i \alpha x - \lambda t} \, dx  \, dt,
$$
which is bounded by
$$
\Bigl( { \eps \over \nu^{2 \gamma}} \Bigr)^k 
 \Bigl( { \eps \over \nu^{\gamma}} \Bigr)^l  \Bigl( { \eps \over \nu^{\gamma}} \Bigr)^m
 {1 \over \nu^{3 \gamma}} {1 \over (1 + | \nu^{-2 \gamma} \lambda |)^N (1 + | \nu^{-\gamma} \alpha |)^N} 
$$
for any positive integer $N$.
Thus, the Fourier Laplace $\widehat V_1^1$ of $V_1^1$ satisfies (\ref{decayhatuchi}) for any arbitrarily large $N$,
which ends the proof.
\end{proof}

%%%%%%%%%%%%%%%

\subsection{Structure of the instability }

%%%%%%%%%%%%%%%

We will construct the approximate solution $V^\nu$ starting from the linear instability of the shear flow
$$
U_0^\eps(y) = (U_1^\eps(0,0,0,y),0).
$$
As $\eps$ goes to $0$, $U_0^\eps(y)$ converges to $(U_1^0(0,0,0,y),0)$. As 
$$
\partial_y U_1^0(0,0,0,0) = \partial_y U_1^{bl,0}(0,0,0,0) \ne 0,
$$
for $\eps$ small enough, $\partial_y U_1^\eps(y) \ne 0$, thus $U_0^\eps(y)$ is an unstable shear layer.
We choose $\alpha_0^\eps$ to be its most unstable eigenmode, namely
$$
\lambda(\alpha_0^\eps,\nu) = \max_\alpha \lambda(\alpha,\nu)
$$
and define $\lambda_r^\eps$ and $\lambda_i^\eps$ by
$$
\lambda(\alpha_0^\eps) = \lambda_r^\eps + i \lambda_i^\eps.
$$
For any $\alpha$, we  moreover define $\lambda_i(\alpha)$ and $\lambda_r(\alpha)$ by
$$
\lambda(\alpha) = \lambda_r(\alpha) + i \lambda_i(\alpha).
$$
We will construct $V^\nu$ under the form
\begin{equation} \begin{split}  % \label{formVnu}
V^\nu(t,x,y) =& \, U_0  + V_{1,1}^\nu(t,x,y) e^{i \alpha_0^\eps \nu^{1/4} x + i \nu^{1/2} \lambda_i^\eps  t + \nu^{1/2} \lambda_r^\eps t}
\\&+  V_{-1,1}^\nu(t,x,y) e^{- i \alpha_0^\eps \nu^{1/4} x - i \nu^{1/2} \lambda_i^\eps t + \nu^{1/2} \lambda_r^\eps t}
\\&+ \sum_{m \in \nit, n \ge 2} V_{m,n}^\nu(t,x,y) 
e^{ i m \alpha_0^\eps \nu^{1/4} x  + i m \nu^{1/2} \lambda_i^\eps t + n \nu^{1/2} \lambda_r^\eps t} ,
\end{split} \end{equation}
where $V^\nu_{-1,1}(t,x,y)$, $V^\nu_{1,1}(t,x,y)$ and all the $V^\nu_{m,n}(t,x,y)$ are series in $\sqrt{\nu} t$, namely of the form
\beq \label{formVnu2}
V_{m,n}^\nu(t,x,y) = \sum_{p \ge 0} (\sqrt{\nu} t)^p V_{m,n,p}^\nu(t,x,y)
\eeq
where, for any integers $m$, $n$ and $p$, $V^\nu_{m,n,p}$ has its Fourier Laplace transform supported in a very small ball around $0$,
of radius of order $\nu^{2 \gamma}$.
Moreover, each $V_{m,n,p}^\nu$ may be expanded in fractional powers of $\nu$.
We only need to construct an approximate solution, thus all the series are in fact be finite sums.
Moreover, as $V^\nu$ is real, we have, for any $m$ and $n$,
\beq \label{formVnu4}
V_{-m,n}^\nu = \bar V_{m,n}^\nu .
\eeq
Let us now describe the form of the various functions which appear in the exrpansion of $V^\nu_{m,n,p}$.
If $m = 1$, $n = 1$, and for any $p$, these functions are of the form
\beq \label{form1} \begin{split}
V(t,x,y) =& \int_{\rit \times \rit^P \times \rit^P}  \phi \Bigl( {\alpha - \alpha_0^\eps \over \nu^\beta}, {\tilde \alpha_1}, 
\cdots, {\tilde \alpha_P }, 
{\tilde \lambda_1 }, \cdots, {\tilde \lambda_P }, y \Bigr)
\\& \times e^{ i \nu^{1/4} (\alpha - \alpha_0^\eps) x + i \nu [\lambda_i(\alpha) - \lambda_i^\eps] t 
+ \nu^{1/2} [\lambda_r(\alpha) - \lambda_r^\eps] t + i  \nu^{\gamma} \sum_{j=1}^P \tilde \alpha_j  x
+ \nu^{2 \gamma} \sum_{j=1}^P \tilde \lambda_j t }
\\& 
\, d\alpha \, d\tilde \alpha_1 \cdots \, d\tilde \alpha_P \, d\tilde \lambda_1 \, \cdots \, d\tilde \lambda_P,
\end{split} \eeq
where the stream function $\phi$ is smooth in all its parameters, compactly supported in $[-1,1]$ in its first parameter,
rapidly decreasing in all its parameters
excepted $y$, and where $P$ is an integer ($P=0$ meaning that there is no variable $\tilde \alpha$ and $\tilde \lambda$
and no integration on these variables).

Their form for $m = -1$ is similar, up to the change $\alpha_0^\eps \to - \alpha_0^\eps$.
We recall that $V_{m,n,p} = 0$ if $m = \pm 1$ and $| n | \ne 1$.

If $| m | > 1$, they are of the form
\beq \label{form2} \begin{split}
V(t,x,y) =&  \int_{\rit^N \times \rit^P \times \rit^P}  \phi \Bigl( {\alpha_1 \pm \alpha_0^\eps \over \nu^{\beta}},\cdots,
{\alpha_N \pm \alpha_0^\eps \over \nu^{\beta}}, {\tilde \alpha_1 }, 
\cdots, {\tilde \alpha_P}, 
{\tilde \lambda_1}, \cdots, {\tilde \lambda_P }, y \Bigr)
\\& \times e^{ i \nu^{1/4} \sum_{j=1}^N (\alpha_j \pm \alpha_0^\eps) x 
+ i \nu^{1/2} \sum_{j=1}^N  \pm [\lambda_i(\alpha_j) - \lambda_i^\eps] t + \nu^{1/2} \sum_{j=1}^N [\lambda_r(\alpha) - \lambda_r^\eps] t}
\\& \times e^{i  \nu^{\gamma} \sum_{j=1}^P \tilde \alpha_j  x
+ \nu^{2 \gamma} \sum_{j=1}^P \tilde \lambda_j t } 
\, d\alpha_1 \, \cdots \, d\alpha_N \, d\tilde \alpha_1 \cdots \, d\tilde \alpha_P \, d\tilde \lambda_1 \, \cdots \, d\tilde \lambda_P,
\end{split} \eeq
where $\phi$ is a smooth function of all its arguments, with compact support in $[-1,1]$ for its $N$ first arguments
and rapidly decaying in all  its arguments excepted $y$,
where there are $N_1$ "+" signs and $N_2$ "-" signs, in such a way that $N_1 + N_2 = n$ and $N_1 - N_2 = m$.

We first give an upper bound on functions of the form (\ref{form1}) and (\ref{form2}).

\begin{lemma}
If $V$ is of the form (\ref{form1}) or (\ref{form2}), 
where $\phi$ is a smooth function of its arguments, with compact support for all its arguments except $y$,
then
\beq \label{boundpsin}
| V(t,x,y) | \lesssim  \langle \sqrt{\nu} t \rangle^{-N/2} 
\eeq
for all $t$, $x$ and $y$,and similarly for all its $x$ and $y$ derivatives.
\end{lemma}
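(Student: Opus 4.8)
The plan is to prove the pointwise decay bound \eqref{boundpsin} by a repeated stationary–phase / non‑stationary–phase integration by parts in the integration variables $\alpha$ (or $\alpha_1,\dots,\alpha_N$), exploiting that the amplitude $\phi$ is compactly supported and smooth in those variables while the phase has a strictly convex real part near the maximiser $\alpha_0^\eps$. First I would treat the model case \eqref{form1} with $P=0$, so that
$$
V(t,x,y) = \int_{|\alpha - \alpha_0^\eps| \le \nu^\beta} \phi\Bigl(\tfrac{\alpha-\alpha_0^\eps}{\nu^\beta},y\Bigr)\,
e^{i\nu^{1/4}(\alpha-\alpha_0^\eps)x + i\nu^{1/2}[\lambda_i(\alpha)-\lambda_i^\eps]t + \nu^{1/2}[\lambda_r(\alpha)-\lambda_r^\eps]t}\,d\alpha .
$$
Since $\alpha_0^\eps$ maximises $\Re\lambda$, we have $\lambda_r'(\alpha_0^\eps)=0$ and $\lambda_r''(\alpha_0^\eps)<0$, so on the support of the cutoff $\Re\lambda(\alpha)-\lambda_r^\eps \le -c|\alpha-\alpha_0^\eps|^2$ for some $c>0$; hence the modulus of the integrand is at most $e^{\nu^{1/2}\lambda_r^\eps \cdot 0}$ times a Gaussian in $\nu^{1/4}(\alpha-\alpha_0^\eps)(\nu^{1/4}t)^{1/2}$-type variable, which already gives a crude $\langle \sqrt\nu t\rangle^{-1/2}$ factor after integrating in $\alpha$ on an interval of length $\nu^\beta$; to upgrade the exponent to $N/2$ one iterates.

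The key step is the integration‑by‑parts argument. Writing the phase as $\Theta(\alpha)= i\nu^{1/4}(\alpha-\alpha_0^\eps)x + \nu^{1/2}[\lambda(\alpha)-\lambda(\alpha_0^\eps)]t$ (absorbing the $i$ into $\lambda$ as in \eqref{formunstablebbis}), one has $\partial_\alpha e^{\Theta(\alpha)} = \Theta'(\alpha) e^{\Theta(\alpha)}$ with $\Theta'(\alpha)= i\nu^{1/4}x + \nu^{1/2}\lambda'(\alpha)t$. On the support of $\phi$, $\lambda'(\alpha)= \lambda_r'(\alpha)+i\lambda_i'(\alpha)$ with $\lambda_r'(\alpha)=O(|\alpha-\alpha_0^\eps|)$ small, so $\Re\Theta'$ is governed by $\nu^{1/2}t|\alpha-\alpha_0^\eps|$ and $\Im\Theta'$ by $\nu^{1/4}x + \nu^{1/2}\lambda_i'(\alpha_0^\eps)t$. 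Combining the Gaussian decay $e^{-c\nu^{1/2}t|\alpha-\alpha_0^\eps|^2}$ coming from $\Re\Theta$ with repeated integration by parts using the operator $L = (1+|\Theta'|^2)^{-1}\overline{\Theta'}\,\partial_\alpha$ (adjoint $L^*$), each application produces a gain of $(1+|\nu^{1/4}x|+\sqrt{\nu^{1/2}t})^{-1}$ while differentiating the now‑$\nu^{\beta}$‑rescaled amplitude $\phi$ only costs $\nu^{-\beta}$ per derivative; since $\beta<1/4$ these costs are dominated once we balance against the length $\nu^\beta$ of the $\alpha$-interval and the $\nu^{1/4}$ scaling of $x$. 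One obtains $|V|\lesssim (1+\sqrt{\nu^{1/2}t})^{-N/2}(1+|\nu^{\beta+1/4}x|)^{-N}$, in particular \eqref{boundpsin}, and the same operator applied further gives the estimate on all $x$ and $y$ derivatives (the $y$ derivatives merely hit $\phi$, which is smooth in $y$ with the claimed rapid decay in the other variables, so they are harmless). For \eqref{form2} the argument is identical done variable by variable in $\alpha_1,\dots,\alpha_N$: the phase separates as a sum, each term has its real part maximised at $\pm\alpha_0^\eps$ with nondegenerate second derivative, so one integrates by parts in each $\alpha_j$ independently and multiplies the resulting decays — and since $n=N_1+N_2=N\ge 2$ one in fact gets an even stronger power, but $\langle\sqrt\nu t\rangle^{-N/2}$ suffices. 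The extra variables $\tilde\alpha_1,\dots,\tilde\alpha_P$, $\tilde\lambda_1,\dots,\tilde\lambda_P$ contribute phases $\nu^\gamma\tilde\alpha_j x$, $\nu^{2\gamma}\tilde\lambda_j t$ with $\phi$ Schwartz in them; since on the relevant time range $t\lesssim\nu^{-2\gamma}\log\nu^{-1}$ one has $\nu^{2\gamma}t\lesssim\log\nu^{-1}$, non‑stationary phase (or just the rapid decay of $\phi$) in these variables produces only harmless logarithmic or bounded factors, so they do not affect the stated bound.

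The main obstacle, and the point requiring care, is bookkeeping the competition between the amplitude cost $\nu^{-\beta}$ per $\alpha$-derivative and the phase gain: the gain per integration by parts is only guaranteed when $|\Theta'|$ is large enough, i.e.\ when $|\nu^{1/4}x|+\sqrt{\nu^{1/2}t}\gtrsim \nu^{\beta+1/4}$ (after rescaling $\alpha$), and in the complementary "near the group line, short time" regime one must instead fall back on the trivial bound $|V|\le \|\phi\|_{L^1}\lesssim \nu^\beta$ combined with the Gaussian factor from $\Re\Theta$; patching these two regimes so that in every case one recovers the clean $\langle\sqrt\nu t\rangle^{-N/2}$ is the only delicate part. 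One must also verify that the boundary terms in each integration by parts vanish — this is immediate because $\phi$ is compactly supported in the (rescaled) $\alpha$ variables — and that differentiating the Gaussian factor $e^{\nu^{1/2}[\lambda_r(\alpha)-\lambda_r^\eps]t}$ in $\alpha$ brings down at most $\nu^{1/2}t\,|\alpha-\alpha_0^\eps|\lesssim (\nu^{1/2}t)^{1/2}$ after using the Gaussian, which is exactly the quantity we are gaining, so it is self‑improving. Once the model cases are settled, the general statement follows verbatim since every $V_{m,n,p}$ in the construction is a finite sum of functions of the form \eqref{form1} or \eqref{form2}.
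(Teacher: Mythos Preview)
Your approach is essentially the same as the paper's: both invoke a stationary phase / steepest descent argument in the variables $\alpha_1,\dots,\alpha_N$, using that $\Re\lambda$ is maximised at $\alpha_0^\eps$ with $\lambda_r''(\alpha_0^\eps)<0$, so that the real part of the phase contributes a Gaussian factor $e^{-c\,\nu^{1/2}t\,|\alpha_j-\alpha_0^\eps|^2}$ in each variable and the $\tilde\alpha_j,\tilde\lambda_j$ are treated as harmless parameters. The paper's proof is in fact only a two-line sketch to this effect; you have supplied considerably more detail.

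One remark: the integration-by-parts machinery you set up (the operator $L$, the balancing of the $\nu^{-\beta}$ amplitude cost against the phase gain, the patching of regimes) is more than is needed for \emph{this} lemma. The bound $\langle\sqrt{\nu}\,t\rangle^{-N/2}$ follows directly from Laplace's method: for $\sqrt{\nu}\,t\lesssim 1$ the trivial bound $|V|\lesssim \nu^{N\beta}\le 1$ (measure of the support in $(\alpha_1,\dots,\alpha_N)$) suffices, while for $\sqrt{\nu}\,t\gtrsim 1$ the Gaussian factor alone gives $\int e^{-c\,\nu^{1/2}t\,|\alpha_j-\alpha_0^\eps|^2}\,d\alpha_j\lesssim(\nu^{1/2}t)^{-1/2}$ per variable, and in the intermediate range $1\lesssim\sqrt{\nu}\,t\lesssim\nu^{-2\beta}$ the trivial bound $\nu^\beta\le(\sqrt{\nu}\,t)^{-1/2}$ again suffices. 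The oscillatory integration by parts you describe is really what drives the \emph{next} lemma (the $x$-localisation \eqref{generalform4}), not this one; keeping the two mechanisms separate makes the argument cleaner.
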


\begin{proof}
To alleviate the notations, we just detail the proof when there are only "$+$" signs.
The decay bound (\ref{boundpsin}) is obtained through a stationary phase argument. The phase is
$$
\Theta(\alpha_1,\cdots,\alpha_N) =  i \nu^{1/4} \sum_{j=1}^N (\alpha_j - \alpha_0^\eps) x 
+ i \nu^{1/2} \sum_{j=1}^N  [\lambda_i(\alpha_j) - \lambda_i^\eps] t + \nu^{1/2} \sum_{j=1}^N [\lambda_r(\alpha_j) - \lambda_r^\eps] t.
$$
It is stationary when all the $\alpha_j$ equal $\alpha_0^\eps$, which achieves the maximum of $\lambda_r(\alpha)$, 
and when $x / t = c_\sigma(\alpha_0^\eps)$, 
the corresponding group phase velocity. In this case $\lambda_r(\alpha_j) - \lambda_r^\eps = 0$ for all $j$.
The variables $\tilde \alpha_j$ and $\tilde \lambda_j$ can be seen as simple parameters.
\end{proof}

We will also need a localization property in the $x$ variable. 

\begin{lemma}
If $V$ is of the form (\ref{form1}) or (\ref{form2}), then, for any $Q$ arbitrarily large,
\beq \label{generalform4}
| V(t,x,y) | \lesssim {1  \over 1 + | \nu^{\beta} x |^Q}.
\eeq
\end{lemma}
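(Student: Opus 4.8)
The plan is to exploit the compact support of $\phi$ in its wave-number arguments to perform repeated integration by parts in the $\alpha$ (or $\alpha_j$) variables, exactly as was done to obtain the crude bound \eqref{Psiter} on $\Psi_{lin}$. First I would treat the case \eqref{form1}, since \eqref{form2} is handled identically argument-by-argument. Write the integrand as $\phi(\cdots) e^{i \nu^{1/4}(\alpha - \alpha_0^\eps) x} \, h(\alpha,t,y)$, where
$$
h(\alpha,t,y) = e^{i \nu^{1/2}[\lambda_i(\alpha) - \lambda_i^\eps] t + \nu^{1/2}[\lambda_r(\alpha) - \lambda_r^\eps] t + i \nu^\gamma \sum_j \tilde\alpha_j x + \nu^{2\gamma} \sum_j \tilde\lambda_j t}
$$
collects the remaining exponential factors, which do not involve $x$ through the $\alpha$-dependence except via the overall $e^{i\nu^{1/4}\alpha x}$. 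The key observation is that $\partial_\alpha$ hitting $e^{i\nu^{1/4}(\alpha-\alpha_0^\eps)x}$ produces a factor $i\nu^{1/4} x$, while $\partial_\alpha$ hitting $\phi$ or $h$ produces, for $t \lesssim \nu^{-2\gamma}$ and $\alpha$ in the support of $\phi$ (an interval of length $2\nu^\beta$ around $\alpha_0^\eps$), bounded contributions after extracting the appropriate powers of $\nu^\beta$ from the rescaled argument of $\phi$ and of $\nu^{1/2} t \lesssim \nu^{1/2-2\gamma}$ from the derivatives of $\lambda$. Since $\gamma$ is close to $1/4$, the time-dependent factor $\nu^{1/2}\lambda' t$ is at worst of order $\nu^{1/2 - 2\gamma} \lesssim 1$ (more carefully, over the relevant time scale $\nu^{-1/2}\log\nu^{-1}$ used later it is only logarithmically large, but one has a uniform bound on the truncated support), so each $\alpha$-derivative of $h$ costs at most a bounded constant, and each $\alpha$-derivative of $\phi\big((\alpha-\alpha_0^\eps)/\nu^\beta\big)$ costs $\nu^{-\beta}$.

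Concretely, integrating by parts $Q$ times in $\alpha$ gives
$$
V(t,x,y) = \frac{1}{(i\nu^{1/4} x)^Q} \int_{\rit \times \rit^P \times \rit^P} \partial_\alpha^Q\big[ \phi(\cdots) h(\alpha,t,y) \big] \, e^{i\nu^{1/4}(\alpha - \alpha_0^\eps)x} \, d\alpha \, d\tilde\alpha \, d\tilde\lambda,
$$
and by Leibniz each term in $\partial_\alpha^Q$ is bounded by $\nu^{-\beta Q}$ times a constant depending on $Q$ (the $\tilde\alpha,\tilde\lambda$ integrals converge because $\phi$ is rapidly decaying in those variables). Hence $|V| \lesssim |\nu^{1/4}x|^{-Q} \nu^{-\beta Q} = |\nu^{\beta - 1/4} x|^{-Q} \cdot \nu^{-Q/2}$... which is slightly off; the correct accounting is that the decaying scale in $x$ is $\nu^{-\beta-1/4}$, i.e. one gets $|V| \lesssim |\nu^{\beta+1/4} x|^{-Q}$. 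Since the statement only claims decay on the scale $\nu^{-\beta}$, which is a weaker (larger) scale than $\nu^{-\beta-1/4}$, combining this with the trivial bound $|V|\lesssim 1$ from the previous lemma and interpolating between the two regimes $|x| \le \nu^{-\beta}$ and $|x| \ge \nu^{-\beta}$ yields \eqref{generalform4} immediately; in fact one may simply note $1 + |\nu^\beta x|^Q \lesssim 1 + |\nu^{\beta+1/4}x|^Q$ for $\nu$ small, so the sharper decay already implies the claim. For \eqref{form2} one performs the same integration by parts separately in each $\alpha_j$, using that $\phi$ has compact support of width $\nu^\beta$ in each, picking up a factor $(\nu^{1/4}x)^{-Q}\nu^{-\beta Q}$ from whichever single variable one chooses (one variable suffices since the phase has $\sum_j (\alpha_j \pm \alpha_0^\eps)$ appearing with the same coefficient $\nu^{1/4}x$).

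The only mild subtlety — and the step I would be most careful about — is ensuring that derivatives of the exponential factor $h$ in $\alpha$ are genuinely bounded uniformly in $t$ on the truncated time support. This is where the truncation $\chi(\nu^{2\gamma}t)$ built into the construction of $V^\nu_{m,n,p}$ matters: on $\mathrm{supp}\,\chi(\nu^{2\gamma}\cdot)$ one has $t \lesssim \nu^{-2\gamma}$, so $\nu^{1/2}\lambda_r'(\alpha)t$ and $\nu^{1/2}\lambda_i'(\alpha)t$ are $\lesssim \nu^{1/2 - 2\gamma}$, which is bounded since $\gamma < 1/4$ (we may take $\gamma$ as close to $1/4$ as we wish but strictly less, or accept a harmless $\nu^{-\delta}$ loss and still beat any polynomial in $x$). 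Once this uniform bound on $\partial_\alpha^k h$ is in hand, the integration-by-parts argument is entirely routine, and the lemma follows. I would also remark that the $\tilde\alpha_j$ and $\tilde\lambda_j$ variables play no role here beyond being parameters over which we integrate against a rapidly decaying profile, exactly as noted in the proof of the preceding lemma.
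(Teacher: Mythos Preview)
Your approach---repeated integration by parts in the $\alpha$ (or $\alpha_j$) variable---is the same as the paper's, which phrases it as bounding $\partial_\alpha^p \widehat V$ and invoking the Fourier-decay correspondence. In both cases the relevant inputs are (i) the $\nu^{-\beta}$ cost from each $\alpha$-derivative of the rescaled profile $\phi$, and (ii) the fact that $\nu^{1/2}\lambda'(\alpha)t$ is at most logarithmic on the time range $\nu^{1/2}t\lesssim\log\nu^{-1}$ actually used in the construction (the paper invokes this time restriction explicitly rather than the cruder truncation $t\lesssim\nu^{-2\gamma}$). On this last point you have a sign slip: the paper takes $\gamma>\beta+\tfrac14>\tfrac14$, not $\gamma<\tfrac14$, so your claim that $\nu^{1/2-2\gamma}$ is bounded fails; but your parenthetical fallback (a harmless $\nu^{-\delta}$ loss, or simply restricting to $\nu^{1/2}t\lesssim\log\nu^{-1}$ as the paper does) is the correct fix.

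There is, however, a genuine error in your final step. You obtain $|V|\lesssim (1+|\nu^{\beta+1/4}x|^Q)^{-1}$ and then assert that this implies $|V|\lesssim (1+|\nu^{\beta}x|^Q)^{-1}$ because ``$1+|\nu^\beta x|^Q\lesssim 1+|\nu^{\beta+1/4}x|^Q$ for $\nu$ small''. The inequality goes the other way: for small $\nu$ one has $\nu^{\beta+1/4}<\nu^\beta$, hence $|\nu^{\beta+1/4}x|<|\nu^\beta x|$, so your bound is \emph{weaker}, not sharper, than the one stated. In fact the natural output of the integration-by-parts argument is the $\nu^{\beta+1/4}$ scale, exactly as in \eqref{Psiter}, and the Fourier localisation of $V$ (width $\sim\nu^{\beta+1/4}$ in the true dual variable) prevents one from doing better. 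The statement of the lemma with $\nu^\beta$ rather than $\nu^{\beta+1/4}$ appears to be a minor slip in the paper; it is harmless for the only application (bounding $\mathcal E$ on the region $|x|\gtrsim\nu^{-\gamma}$ with $\gamma>\beta+\tfrac14$), where either exponent gives the required $O(\nu^N)$ smallness.
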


\begin{proof}
To alleviate the notations, we just detail the proof when there are only "$+$" signs.
Let us compute the Fourier transform of $V$ in the $x$ variable. We have
\beq  \begin{split}
\widehat V(t,\alpha,y) =&  \int_{\rit^{N-1} \times \rit^P \times \rit^P}  \phi \Bigl( {\alpha_1 - \alpha_0^\eps \over \nu^{\beta}},\cdots,
{\alpha_N - \alpha_0^\eps \over \nu^{\beta}}, {\tilde \alpha_1}, 
\cdots, {\tilde \alpha_P}, 
{\tilde \lambda_1}, \cdots, {\tilde \lambda_P }, y \Bigr)
\\& \times e^{ i \nu^{1/4} \sum_{j=1}^N (\alpha_j - \alpha_0^\eps) x 
+ i \nu^{1/2} \sum_{j=1}^N  [\lambda_i(\alpha_j) - \lambda_i^\eps] t + \nu^{1/2} \sum_{j=1}^N [\lambda_r(\alpha) - \lambda_r^\eps] t}
\\& \times e^{i  \nu^{\gamma} \sum_{j=1}^P \tilde \alpha_j  x
+ \nu^{2 \gamma} \sum_{j=1}^P \tilde \lambda_j t } 
\, d\alpha_1 \, \cdots \, d\alpha_N \, d\tilde \alpha_1 \cdots \, d\tilde \alpha_P \, d\tilde \lambda_1 \, \cdots \, d\tilde \lambda_P,
\end{split} \eeq
where
$$
\alpha_1 =\alpha -\sum_{k=2}^N \alpha_k.
$$
We then have
\beq \begin{split}  \label{generalform2}
\partial_\alpha \widehat V(t, \alpha,y) =&\int_{\mathbb{R}^{N-1}\times \rit^P \times \rit^P }
\Bigl[ \nu^{-\beta} \partial_{\alpha_1} \phi + i \nu^{1/2} \partial_\alpha \lambda_i(\alpha_1) t 
+ \nu^{1/2} \partial_\alpha \lambda_r(\alpha_1) t \Bigr]
\\& \times e^{ i \nu^{1/4} \sum_{j=1}^N (\alpha_j - \alpha_0^\eps) x 
+  i \nu^{1/2} \sum_{j=1}^N  [\lambda_i(\alpha_j) - \lambda_i^\eps] t + \nu^{1/2} \sum_{j=1}^N [\lambda_r(\alpha) - \lambda_r^\eps] t}
\\& \times e^{i  \nu^{\gamma} \sum_{j=1}^P \tilde \alpha_j  x
+ \nu^{2 \gamma} \sum_{j=1}^P \tilde \lambda_j t } 
\, d\alpha_1 \, \cdots \, d\alpha_N \, d\tilde \alpha_1 \cdots \, d\tilde \alpha_P \, d\tilde \lambda_1 \, \cdots \, d\tilde \lambda_P,
\end{split} \eeq
We will consider times such that $\nu^{1/2} t \lesssim \log \nu^{-1}$, thus, we obtain, by iteration, that 
\beq \label{generalform3}
| \partial_\alpha^p \widehat V(t, \alpha,y) | \lesssim \nu^{- p \beta}  ,
\eeq
which leads to (\ref{generalform4}).
As $y$ is a simple parameter in (\ref{generalform2}), if we have bounds on $\phi$ which depend on $y$, then we get the same $y$ dependency on the bounds of $f$ in (\ref{generalform4}).
\end{proof}

%%%%%%%%%%%%%%%%%%%%%%%%%

\subsection{Proof of Theorem \ref{theogeneral}}

%%%%%%%%%%%%%%%%%%%%%%%%%

\subsubsection*{Lighting up the instability}

The first step is to "turn on" the instability. Let $\theta(t)$ be a smooth function which equals $0$ for $t \le 0$ and $1$ for $t \ge 0$.
We define $v^\nu - U$ through its stream function $\psi(t)$, and choose, for $0 \le t \le 1$,
$$
\psi(t) = \nu^N \theta(t) \Psi_{lin}(0) + \hbox{c.c.},
$$
where $\Psi_{lin}$ is defined in (\ref{local}). 
We then define $f^\nu$ for $0 \le t \le 1$ to be the force which is needed for  $v^\nu$ to be a solution of Navier Stokes equations
with forcing term $f^\nu$, namely we choose $f^\nu = NS(v^\nu)$ and note that $f^\nu$ is of order $\nu^N$.
By construction $\psi(1) = \nu^N \Psi_{lin}(0)$. We define $t' = t - 1$ and go on with the construction of the approximate
solution for $t' \ge 0$. For simplicity we drop the "prime" index of $t'$, and called it again $t$.

\subsubsection*{Construction of the instability}

Let us now describe the construction of the approximate solution. We proceed by iteration, starting from $\Psi_{lin} + \bar \Psi_{lin}$,
with a corresponding vorticity $\omega_{lin} + \bar \omega_{lin}$. This vorticity then "interacts" with the background velocity field through
$\tilde L_{NS}^1$ and also through the transport term $Q(\Psi_{lin},\Psi_{lin})$. Note that interaction with the background
do not change the values of $m$ and $n$, but creates polynomials in $t$, namely $m = \pm 1$, $n = 1$ and $p \ge 1$ when we inverse
Orr Sommerfeld equations.
Quadratic interactions however lead to $m= -2$, $0$ and $2$, to $n = 2$ and $p = 0$ when we inverse Orr Sommerfeld operator.

Let us first describe the nonlinear interaction of two functions $V$ and $V'$ of the form (\ref{form1})
through the transport term $(V_1 \cdot \nabla) V_2$. 
Let $\phi$ and $\phi'$, $N$ and $N'$ be the corresponding parameters of $V$ and $V'$.
First we observe that the vorticity $\omega$ corresponding to $V$ is also of the form (\ref{form1}), with corresponding
function $\phi_\omega$. The $X^q_\omega$ norms, for $q \ge 0$, of $\phi_\omega$ are bounded by the $X^{q,0}$ norms of
$\phi$.
The interaction term $(V' \cdot \nabla) \omega$ is also of the form (\ref{form1}), with now an integral over $\rit^{N+N'}$.
The $X^q_\omega$ norms of this term are bounds by the $X^{q,0}$ norms of $\phi$ and $\phi'$,
 together with a gain of a factor $\nu^{1/4}$ thanks to (\ref{u5}).
The inversion of Orr Sommerfeld equations leads to a new velocity field  $V''$, again of the same form, with a loss
of a factor $\nu^{-1/2}$ in the $X^{q,0}$ norms, as indicated by Lemma \ref{away}.
The $X^{q,0}$ norms of $V''$ are thus bounded in terms of the $X^{q,0}$ norms of $\phi$ and $\phi'$, up to the
loss of a factor $\nu^{-1/4}$.
Note that $n \ge 1$ and $n' \ge 1$, thus $n + n' \ge 2$, namely we are away from the spectrum, and no polynomial
term appears.

Let us turn to the interactions with the background. Let $\omega$ be the vorticity of $V$. 
We observe that $\tilde L_{NS}^1 (\omega)$ is of the same form. Inverting Orr Sommerfeld
again leads to the same form, with also polynomial factors if $n = 1$. 
Let us turn to bounds. The first two terms of $\tilde L_{NS}^1$ are of order $\nu^{1/4} \eps \nu^{-2\gamma}$,
recalling that we gain a factor $\nu^{1/4}$ in (\ref{u5}), and using Lemma \ref{gain}. We lose a factor $\nu^{-1/2}$ by inverting
Orr Sommerfeld, thus these two terms leads to a contribution of size 
$\eps \nu^{-1/4 - 2 \gamma} = \nu^\gamma$ for some small positive $\gamma$ since $\delta > 3/4$ and
$\gamma$ may be chosen arbitrarily close to $1/4$.
The other terms of $\tilde L_{NS}^1$ are smaller thanks to the $\eps$ factor in front of them.

Moreover, combining  (\ref{generalform4}) and (\ref{rewriteNS2}), we obtain that the error term ${\cal E}$ is smaller than
$O(\nu^N)$.

\subsubsection*{Conclusion}

It remains to discuss the sizes of the various terms. Using Lemma \ref{boundpsin},
the leading term is $\Psi_{lin}$ which grows like $\nu^N \exp(\nu^{1/2} \Re \lambda t) / \sqrt{ \nu^{1/2} t}$. 
The next term comes from the quadratic interaction
and grows like $\nu^{2N} \exp(2 \nu^{1/2} \Re \lambda t)/ \nu^{1/4} (\nu^{1/2} t)$. Higher  interactions, of order $P$, grow like 
$\nu^{PN} \exp(\nu^{1/2} P \Re \lambda t)/ \nu^{P/4} (\nu^{1/2} t)^{P/2}$, again
using Lemma \ref{boundpsin}. Note the loss of a factor $\nu^{1/4}$ at each step.

 The end of the proof follows the lines of \cite{Gr1}: we define $T^\nu$ such that
$$
\nu^N {e^{\nu^{1/2} \Re \lambda T^\nu} \over \sqrt{\nu^{1/2} T^\nu}} = \nu^{1/4 + \theta} .
$$
For $t = T^\nu$ we observe that the quadratic term is a factor $\nu^\theta$ smaller than the linear one,
and more generally the term of order $P$ is a factor $\nu^{P \theta}$ smaller than the linear one.
In particular the error term is of order $O(\nu^N)$ provided the approximate solution contains enough terms.
At $T^\nu$, the approximate solution displays the scales in $y$ of $\Psi_{lin}$.

%%%%%%%%%%%%%%%%%

\section{"Fast" instabilities \label{fast}}

%%%%%%%%%%%%%%%%%

We now turn to the case where $(U_1^0(0,0,0,y),0)$ is unstable with respect to Euler equations.
In all this section we do not rescale $\alpha$ by a factor $\nu^{1/4}$.

\begin{theo} \label{theogeneralfast}
Let us assume that $(U_1^0(0,0,0,y),0)$ is a spectrally unstable shear layer for Euler equations, namely that there exists a
solution $(\alpha_{Ray},c_{Ray},\psi_{Ray})$ to the Rayleigh equation (\ref{Rayleigh}) with $\Im c_{Ray} > 0$,
$\alpha_{Ray} \ne 0$,  $\psi_{Ray}(y) \not\equiv 0$ and $\psi_{Ray}(0) = 0$.
 
Assume that $\eps(\nu) \lesssim \nu^\gamma$ for some $\gamma > 0$. Then, for any arbitrarily small  positive $\theta$,  
for any arbitrarily large $N$ and arbitrarily large $s$, there exists a solution $v^\nu$ of Navier Stokes equations with forcing term $f^\nu$,
 and a time $T^\nu$, such that, for $\nu$ small enough,
\beq \label{theogeneral1-2}
\| v^\nu(0,\cdot,\cdot) - u^{\eps(\nu)}(0,\cdot,\cdot) \|_{H^s} \le \nu^N,
\eeq
\beq \label{theogeneral2-2}
\| f^\nu \|_{L^\infty([0,T^\nu],H^s)} \le \nu^N
\eeq
and
\beq \label{theogeneral3-2}
\| u^\nu(T^\nu,\cdot,\cdot) - u^{\eps(\nu)}(T^\nu,\cdot,\cdot) \|_{L^\infty} \ge  \nu^\theta ,
\eeq
with 
$$
T^\nu \sim C_0 \log \nu^{-1}
$$
for some positive constant $C_0$.
\end{theo}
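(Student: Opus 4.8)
The plan is to mirror the structure of the proof of Theorem \ref{theogeneral}, but working with the genuinely unstable Rayleigh mode instead of the slowly growing Orr--Sommerfeld mode, so that the growth rate is now $O(1)$ rather than $O(\nu^{1/2})$. First I would recall from \cite{GN2,Gr1} that spectral instability of $(U_1^0(0,0,0,y),0)$ for the Euler (Rayleigh) equations persists, for $\nu$ small enough, as a spectral instability of the linearized Navier--Stokes operator $L_{NS}^0$ near the frozen profile $U_0^\eps(y)=(U_1^\eps(0,0,0,y),0)$: there is an eigenvalue $\lambda^\nu$ with $\Re\lambda^\nu \to \Re\lambda_{Ray} = \alpha_{Ray}\Im c_{Ray}>0$, and an eigenfunction $\psi^{unstable,\nu}_{\alpha_{Ray}}$ converging to $\psi_{Ray}$; moreover the eigenfunction has a boundary layer of size $\sqrt{\nu}$ (the standard NS correction forced by the Dirichlet condition). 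One then forms, exactly as in (\ref{local}), a localized wave packet $\Psi_{lin}$ in the $x$ variable by integrating over $\alpha$ near $\alpha_{Ray}$ where $\Re\lambda(\alpha)$ is maximal, which grows like $e^{\Re\lambda_{Ray} t}$ over times of order $O(1)$ and, by stationary phase, decays rapidly away from $c_\sigma t$ with horizontal scale $\nu^{-\beta}$. This is the $V_{1,1}$ term; the analogues of the spaces $X^{n,p}$, $\mathcal S(\lambda)$ and the inversion lemmas of Section \ref{defiGreen} must be re-set up with the growth rate $\lambda$ now $O(1)$, which is in fact \emph{easier}: since $|\Re\lambda|\gtrsim 1$ we stay far from the Orr--Sommerfeld eigenvalue $c(\alpha,\nu)$ (which is $O(\nu^{1/4})$), so only the nonsingular inverse of Lemma \ref{away}-type estimates is needed, and the time-polynomial corrections coming from near-resonance with the critical layer do not arise.

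Second, I would run the iterative construction of the approximate solution $v^\nu = U_0 + \Psi_{lin} + \text{c.c.} + \sum_{n\ge 2} V_{m,n}$ in powers of the amplitude, just as in Section \ref{general}: the vorticity of $\Psi_{lin}$ interacts with the background through $\widetilde L_{NS}^1$ — which is small because $\eps(\nu)\lesssim\nu^\gamma$, so the space/time variations of $u^\eps$ over scales $\eps^{-1}$ contribute negligibly — and through the quadratic transport term $Q(\psi,\psi)$, which produces the $n=2$ modes $m\in\{-2,0,2\}$, then $n=3$, and so on. At each step one inverts $\partial_t + L_{NS}^0$ against a source of the form (\ref{form1})/(\ref{form2}); here the relevant bound is that $(\partial_t + L_{NS}^0)^{-1}$ acting on $e^{n\lambda_r t}(\cdots)$ loses only a factor $O(1)$ (not $\nu^{-1/2}$) because we are at distance $\gtrsim 1$ from the spectrum, while the transport term gains the usual factor from the analogue of (\ref{u5}). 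One truncates in $x$ at scale $\nu^{-\gamma'}$ and in $t$ at a large multiple of $\log\nu^{-1}$, dumping the tail into the forcing $\mathcal E$, which is $O(\nu^N)$ by the localization estimate (\ref{generalform4}); the remaining discrepancy $f^\nu = NS(v^\nu) - \text{(exact NS)}$ is $O(\nu^N)$ provided the expansion is carried to enough terms, which establishes (\ref{theogeneral1-2}) and (\ref{theogeneral2-2}) with $\| v^\nu(0) - u^\eps(0)\|_{H^s}\le \nu^N$ after the same "lighting up" step (multiply the initial perturbation by $\nu^N$ and a time cutoff $\theta(t)$).

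Third, I would close the argument following \cite{Gr1}: the leading term $\Psi_{lin}$ has size $\sim \nu^N e^{\Re\lambda_{Ray} t}$, the term of order $P$ in the amplitude has size $\sim \nu^{PN}e^{P\Re\lambda_{Ray} t}$ (up to polynomial-in-$t$ and fixed negative powers of $\nu$ from the repeated inversions), so defining $T^\nu$ by $\nu^N e^{\Re\lambda_{Ray} T^\nu} \sim \nu^\theta$ gives $T^\nu \sim C_0\log\nu^{-1}$ with $C_0 = (N-\theta)/\Re\lambda_{Ray}$, and at $t=T^\nu$ the nonlinear terms are each a further factor $\nu^{(P-1)\theta}$ smaller than the linear one, so the perturbation $v^\nu(T^\nu) - u^\eps(T^\nu)$ has $L^\infty$ norm $\gtrsim \nu^\theta$, which is (\ref{theogeneral3-2}). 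I expect the main obstacle to be \emph{bookkeeping of the vertical scales and the boundary-layer structure rather than any single hard estimate}: one must verify that the NS eigenfunction near the frozen profile genuinely carries the $\sqrt\nu$ sublayer (and that its vorticity decays better than its stream function, as in (\ref{formunstable2})ff.), that this structure is propagated through every quadratic interaction and every inversion of $\partial_t + L_{NS}^0$ so that $v^\nu(T^\nu)$ exhibits precisely the advertised scales $1$ and $\nu^{1/2}$ (with the finer sublayers $\nu^{3/8}$, $\nu^{5/8}$ appearing only when one tracks the secondary instability, as in Theorems \ref{theomain} and \ref{theomain4}), and that the functional spaces $X^{n,p}$, $Y^n$ are correctly re-scaled to the $O(1)$-growth regime of this section. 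The rest is a routine transcription of the Section \ref{general} machinery with every "$\nu^{1/2}\lambda$" replaced by "$\lambda$" of size $O(1)$.
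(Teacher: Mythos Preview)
Your proposal is correct and follows essentially the same route as the paper: construct a growing mode for the frozen profile with a $\sqrt{\nu}$ boundary layer, form a wave packet, iterate the quadratic interactions exactly as in Section~\ref{general} but with $O(1)$ growth instead of $O(\nu^{1/2})$, and conclude by the \cite{Gr1} argument with $T^\nu\sim C_0\log\nu^{-1}$. The only notable difference is that the paper builds an \emph{approximate} Orr--Sommerfeld eigenmode explicitly via a matched asymptotic expansion $\phi_{Orr}=\sum\nu^{n/2}\phi_n^{int}(y)+\sum\nu^{n/2}\phi_n^{bl}(\nu^{-1/2}y)$ (equations (\ref{exp1})--(\ref{hie3})), whereas you invoke an exact Navier--Stokes eigenmode from \cite{GN2,Gr1}; both work for Theorem~\ref{theogeneralfast}, but the explicit expansion has the advantage of displaying the sublayer profile $e^{-\mu Y}$ in closed form, which is what the paper later feeds into the secondary-instability analysis of Theorems~\ref{theomain} and~\ref{theomain2}.
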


\begin{proof}
We split the proof into two parts. First we construct an approximate eigenmode $\psi_{Orr}$ for Orr Sommerfeld equation, starting from
$\psi_{Ray}$. Then we construct $v^\nu$ using $\psi_{Orr}$.

The first step, namely the construction of $\psi_{Orr}$, has already been described in \cite{GN3}. We just recall its main lines here,
since it will be used in the next section. The second step 
is similar to section \ref{general}, and even easier, since there is no $\nu^{1/4}$ and no $\nu^{1/2}$ in the exponential factors.

We now recall the construction of an approximate solution for Orr Sommerfeld equation, 
starting from an unstable mode of Rayleigh equations.
Note that, $(c_{Ray},\phi_{Ray})$ is not an approximate solution of Orr Sommerfeld equation, since it does not fit the boundary condition 
$\partial_y \phi_{Ray}(0) = 0$ (else, as $\phi_{Ray}(0) = 0$, this would mean that $\phi_{Ray}$ identically vanishes).
We therefore have to add a "boundary layer" to $\phi_{Ray}$ in order to ensure this boundary layer condition.
We thus look for approximate solutions $\phi_{Orr}$ of Orr Sommerfeld equation of the form
\beq \label{exp1}
\phi_{Orr}(y) = \sum_{n \ge 0}^N \nu^{n/2} \phi_n^{int}(y) + \sum_{n \ge 1}^N \nu^{n/2} \phi_n^{bl}(\nu^{-1/2} y)  
\eeq
together with the following expansion of the eigenvalue
\beq \label{exp1bis}
c_{Orr} = \sum_{n \ge 0}^N \nu^{n/2} c_{n},
\eeq
where $N$ is a large integer.
In this expansion $\phi_n^{int}$ refers to an "interior" behaviour, and $\phi_n^{bl}$ to a "boundary layer" behaviour. 
We start with $\phi_0^{int} = \phi_{Ray}$ and $\phi_0^{bl} = 0$. 
The first boundary layer profile satisfies
\beq \label{hie2}
{ \partial_Y^4 \phi_1^{bl}  \over i \alpha_0}  = (U_s(0) - c_0)  \partial_Y^2 \phi_1^{bl} ,
\eeq
together with the boundary condition
\beq \label{hie3}
\partial_Y \phi_1^{bl}(0) = - \partial_y \phi_0^{int}(0),
\eeq
Hence
$$
\phi_1^{bl}( \nu^{-1/2}  y) 
= - \partial_{y} \phi_0^{int}(0) \mu^{-1} \Bigl( 1   - e^{- \nu^{-1/2} \mu y} \Bigr), \qquad \mu = (- i \alpha_0 c_0) ^{1/2},
$$
where we have chosen the square root in such a way that $\Re \mu > 0$.

The various functions $\phi_n^{int}$ and $\phi_n^{bl}$ may then be constructed by iteration. 
The output of this first phase is an approximate mode $\psi_{Orr}$ of Orr Sommerfeld, with associated approximate eigenvalue $c_{Orr}$.
Let
$$
\psi_{lin}(t,x,y) = e^{i \alpha_0 (x - c_{Orr} t)} \psi_{Orr}(y) .
$$
Then the associated velocity $v_{lin} = \nabla^\perp \psi_{lin}$ satisfies
$$
\partial_t v_{lin} + L_{NS}[U] v_{lin} = \nu^N E_{lin},
$$
where $U = (U_1^0(0,0,0,y),0)$,
 $N$ may be chosen arbitrarily large and $E_{lin}$ is some bounded error term.
A similar construction may be done for $(U_1^\eps(0,0,0,y),0)$ provided $\eps$ is small enough.

Let us now compute the boundary layer term of the velocity field. Using
$$
(u^{bl}, v^{bl} ) = \sqrt{\nu} \nabla^\perp \Bigl( e^{i \alpha_0 (x - c_{Orr} t)} \phi^{bl}_1(\nu^{-1/2} y)  + \hbox{c.c.} + O(\sqrt{\nu}) \Bigr),
$$
we see that $u^{bl}$ and $v^{bl}$ have the following asymptotic expansions
$$ 
u^{bl}(t,x,y) = -  e^{i \alpha_0 (x - c_{Orr} t)}  e^{- \nu^{-1/2} \mu y} \partial_{y} \phi_0^{int}(0) + \hbox{c.c.} + O(\nu^{1/2})
$$
and
 $$
 v^{bl}(t,x,y) = - i \alpha \nu^{1/2}  e^{i \alpha_0 (x - c_{Orr} t)} \mu^{-1}
 \Bigl( 1   - e^{- \nu^{-1/2} \mu y} \Bigr)   \partial_{y} \phi_0^{int}(0) 
  + \hbox{c.c.} + O(\nu).
$$
We note that the horizontal speed in the boundary layer is of order $O(1)$ and exactly compensates the interior horizontal component
of the velocity at the boundary. The vertical speed is of size $O(\nu^{1/2})$ and vanishes as $\nu \to 0$.
Both $u^{bl}$ and $v^{bl}$ are periodic in $x$, with period $2 \pi / \alpha_0$, of order $O(1)$.

The construction of an approximate solution then follows the lines of the proof of Theorem \ref{general},
with no $\eps^{1/4}$ or $\eps^{1/2}$ factors.
Namely, the leading term is $\Psi_{lin}$  grows like $\nu^N \exp(\Re \lambda t) / \sqrt{ t}$. 
The next term comes from the quadratic interaction
and grows like $\nu^{2N} \exp(2 \Re \lambda t)/ t$. Higher  interactions, of order $P$, grow like 
$\nu^{PN} \exp( P \Re \lambda t)/  t^{P/2}$. 
The end of the proof follows the lines of \cite{Gr1}: we define $T^\nu$ such that
$$
\nu^N {e^{\Re \lambda T^\nu} \over \sqrt{T^\nu}} = \nu^{\theta} .
$$
For $t = T^\nu - \tau$, we observe that the linear term dominates the sum of all the other
ones provided $\tau$ is large enough, and that the error term is of order $O(\nu^N)$, which gives the desired result.
\end{proof}

%%%%%%%%%%%%%%%%%%%%%%%%%%%%%%%%%%%%%%%%%%%%%%%%%%%%

\section{Proof of the Theorems}

%%%%%%%%%%%%%%%%%%%%%%%%%%%%%%%%%%%%%%%%%%%%%%%%%%%%

In this section we do not rescale $\alpha$ by a factor $\nu^{1/4}$.

%%%%%%%%%%%%%%%%%%%%%%%%%%%%%%%%%%%%%%%%%%%%%%%%%%%%

\subsection{Proof of Theorem \ref{theofirst2} }

%%%%%%%%%%%%%%%%%%%%%%%%%%%%%%%%%%%%%%%%%%%%%%%%%%%%

Note that the profile $(U_s(y),0)$ is independent of the slow variables $T$, $X$ and $Y$,  and is thus of the 
form (\ref{ueps}) with $\eps = 0$.
Following \cite{Bian4,Zhang}, this profile is spectrally unstable if $U_s'(0) \ne 0$, with unstable modes described in section \ref{linear}..
We then apply  Theorem \ref{theogeneral}, which leads to Theorem \ref{theofirst2}.

%%%%%%%%%%%%%%%%%%%%%%%%%%%%%%%%%%%%%%%%%%%%%%%%%%%%

\subsection{Proof of Theorem \ref{theomain}}

%%%%%%%%%%%%%%%%%%%%%%%%%%%%%%%%%%%%%%%%%%%%%%%%%%%%

This Theorem describes a double instability: first the main flow is unstable, with an unstable mode which creates a
sublayer of size $\nu^{1/2}$. Then this sublayer becomes itself unstable.
The first instability is "fast" and the second is "slow".

%%%%%%%%%%%%%%%%%%%%%%%%

\subsubsection{Description of the first instability \label{firstinsta}}

%%%%%%%%%%%%%%%%%%%%%%%%

Let us start with the case of holomorphic shear layer profiles.
The first instability is constructed in details and justified in \cite{GN2}, and sketched in section \ref{fast}.
The main theorem of \cite{GN2} exactly
provides the existence of a solution $V_1^\nu$ of Navier Stokes equation with forcing term $F^\nu$
on a time interval $[0,T_1^\nu]$, satisfying (\ref{d1})-(\ref{d4}). In particular, at time $T_1^\nu$,
$$
\| V_1^\nu(T_1^\nu,\cdot,\cdot) - V_0^\nu(T_1^\nu,\cdot,\cdot) \|_{L^\infty} \ge \sigma 
$$
for some $\sigma$ independent on $\nu$.
By construction $V_1^\nu(T_1^\nu,\cdot,\cdot)$ displays the sizes $O(1)$ and $O(\nu^{1/2})$ in $y$ and is of the form
$$
V_1^\nu(T_1^\nu + t,x,y) = V_1^{int,\nu}(t,x,y) + V_1^{bl,\nu}(t,x,\nu^{-1/2} y) .
$$
In the case of smooth but non holomorphic profiles $U_s$, Theorem \ref{theogeneralfast} provides
a similar solution $V_1^\nu$, but which only satisfies (\ref{d3bis}).

%%%%%%%%%%%%%%%%%%%%%%%%%%%%%

\subsubsection{Description of the secondary instability }

%%%%%%%%%%%%%%%%%%%%%%%%%%%%%

Let us first investigate the case of holomorphic profiles.
We will build a secondary instability in the sublayer of  size $\nu^{1/2}$ of $V_1^\nu(T_1^\nu,\cdot,\cdot)$.
For this, we  rescale $V_1^\nu$  in order to focus on this sublayer $O(\nu^{1/2})$.
More precisely we define
$$
u(t,x,y) = V_1^{int,\nu} \Bigl( T_1^\nu + \nu^{1/2} t, \nu^{1/2} x, \nu^{1/2} y \Bigr)
+ V_1^{bl,\nu} \Bigl( T_1^\nu + \nu^{1/2} t, \nu^{1/2} x, y \Bigr) .
$$
After this rescaling, $u(t,x,y)$ has scales of order $O(1)$ and $O(\nu^{-1/2})$ in $y$, and of order $O(\nu^{-1/2})$ in $x$ and $t$.
It is also of the form (\ref{ueps}) with $\eps = \nu^{1/2}$.
With this rescaling, the viscosity of Navier Stokes equations changes from $\nu$ to $\nu_1 = \sqrt{\nu}$.
Moreover, $u(0,0,y)$ is explicit in $y$, and of the form
$$
V_1^{bl,\nu}(0,0,y) =  - V_1^{int,\nu}(0,0,0) \Bigl( 1 - e^{-\mu y},0 \Bigr)+ O(\nu^{1/2}).
$$
We can choose the point $x = 0$ in such a way that $V_1^{int,\nu}(t,0,0) \ne 0$.
Then $\partial_y V_1^{bl,\nu}(0,0,0) \ne 0$. 
As the viscosity $\nu_1 = \nu^{1/2}$ goes to $0$, this shear layer itself becomes linearly unstable.

Let $\alpha = \alpha_0 \nu_1^{1/4}$ be an unstable wave number for this shear layer profile. 
Let $\lambda(\alpha,\nu) = O(\nu_1^{1/4})$ be an unstable eigenvalue
with corresponding eigenmode $\psi_{lin}$. Then $\psi_{lin}$ exhibits the scales $O(1)$, $O(\nu_1^{-1/4})$ and $O(\nu_1^{1/4})$,
namely $O(\nu^{1/2})$, $O(\nu^{3/8})$ and $O(\nu^{5/8})$ in the original variables.

We now apply Theorem \ref{theogeneral}, with $\eps = \nu^{1/2}$ and $\delta = 1$ since the viscosity is now $\nu_1 = \nu^{1/2}$,
which ends the proof.

Let us turn to the case of $C^\infty$ profiles.
The first nonlinear instability described in the previous paragraph only reaches $O(\nu^\delta)$ for an arbitrarily small $\theta$,
thus, $V_1^{bl,\nu}$ has only a magnitude $O(\nu^\delta)$ and not $O(1)$ as previously.
Let us consider Orr Sommerfeld with a shear layer $\tilde U_s(y)$ of the form $\tilde U_s(y) = \nu^\theta U_s(y)$ where $U_s(y)$ 
is a given function.
Let $c(\alpha,\nu)$ be the dispersion relation corresponding to $U_s$.
Then the dispersion relation $\widetilde c(\alpha,\nu)$ corresponding to $\tilde U_s$ is
$$
\widetilde c(\alpha,\nu) = \nu^\theta c (\alpha,\nu^{1 - \theta}) .
$$
In particular, provided $\theta$ is small enough, $\tilde U_s$ is also unstable. The most unstable eigenvalue $\lambda$ has
a growth of order $\nu^{1/2 +  \theta / 2}$.
The proof of Theorem \ref{theogeneral} can be extended to this case. The instability will develop with times of order
$T_\nu = \nu^{-1/2 - \theta/2} \log \nu^{-1}$, namely slightly slower.

%%%%%%%%%%%%%%%%%%%%%%%%%%%%%%%%%%

\subsection{Proof of Theorem \ref{theomain2}}

%%%%%%%%%%%%%%%%%%%%%%%%%%%%%%%%%%

The proof of Theorem \ref{theomain2} is close to that of Theorem \ref{theomain}. 
It is a double instability. First Prandtl's layer is nonlinear unstable. This first instability creates a sublayer of size $\nu^{1/4}$
which becomes unstable, leading to a secondary instability.
The first instability is "fast" and the second is "slow".

%%%%%%%%%%

\subsubsection{First instability}

%%%%%%%%%%

We first rescale $u$ and introduce
$$
u_1^{\nu}(t,x,y) = u^{Euler,\nu}(t_0 + \nu^{1/2} t, x_0 + \nu^{1/2} x, \nu^{1/2} y) 
+ u^{Prandtl,\nu}(t_0 + \nu^{1/2} t, x_0 + \nu^{1/2} x,  y) 
$$
which is of the form (\ref{ueps}) with $\eps = \sqrt{\nu}$.
By assumption, $u_1^0(0,0,0,y)$ is an unstable shear layer for Euler equations. Thus, there exists an unstable eigenmode
$\psi_1^0$, with wave number $\alpha_1^0$ and an eigenvalue $c_1^0$. Note that both $\alpha_1^0$ and $\Im c_1^0$ are of order $O(1)$.

We thus may apply Theorem \ref{theogeneralfast} to prove that the instability grows nonlinear and reach a size $O(\nu^{\theta})$.
In the case of analytic initial data, using technics developed in \cite{GN2},  we can even take $\theta = 0$ and
prove that the instability reaches a size of order $O(1)$.

%%%%%%%%%%

\subsubsection{Second instability}

%%%%%%%%%%

The secondary instability is then similar to that of Theorem \ref{theomain}.

%%%%%%%%%%%%%%%%%%%%%%%%%%%%%%%%%%

\subsection{Proof of Theorem \ref{theomain4}}

%%%%%%%%%%%%%%%%%%%%%%%%%%%%%%%%%%

The first step is to rescale space and time and to introduce
$$
t_1 = {t \over \sqrt{\nu}}, \quad 
x_1 = {x \over \sqrt{\nu}}, \quad 
y_1 = {y \over \sqrt{\nu}},
$$
together with
$$
u_1(t_1,x_1,y_1) = u(\sqrt{\nu} t_1, \sqrt{\nu} x_1, \sqrt{\nu} y_1) .
$$ 
The viscosity in the rescaled variables is now
$$
\nu_1 = \sqrt{\nu}.
$$
and $u_1$ is of the form (\ref{ueps}) with $\eps = \sqrt{\nu} = \nu_1$.
In these new variables, $u_1$ has two scales in $y$, namely $O(\nu^{-1/2})$ (Euler scale) and $O(1)$ (Prandtl's boundary layer scale).

Using the results of \cite{Bian4,Zhang}, we know that the $O(1)$ layer is unstable for horizontal wavenumbers $\alpha_1$ of order $O(\nu_1^{1/4})$,
with corresponding eigenvalues $c_\nu$ of order $O(\nu_1^{1/4})$. 
The typical time of growth of such instabilities is of order $O(\nu_1^{-1/2})$.
The corresponding eigenvector $\psi$ exhibits three vertical sizes: $O(1)$, $O(\nu_1^{1/4})$ and $O(\nu_1^{-1/4})$, or
equivalently, in the initial variable $y$, $O(\nu^{1/2})$, $O(\nu^{5/8})$ and $O(\nu^{3/8})$.
These instabilities travel with a group velocity of order $\nu_1^{1/4}$,  over lengths of order $\nu_1^{-1/4} \log \nu_1$, namely
$\nu^{5/8} \log \nu \ll 1$ in the original $x$ variable.
Theorem \ref{theomain4} is then an application of Theorem \ref{theogeneral}.

%%%%%%%%%%%%%%%%%%%%%%%%%%%%%%%%%%

\subsubsection*{Acknowledgments}  D. Bian is supported by NSFC under the contract 12271032.

%%%%%%%%%%%%%%%%%%%%%%%%%%%%%%%%%%

%%%%%%%%%%%%%%%%%%%%%%%%%%%%%%%%%%

\end{document}